\documentclass[11pt,a4paper]{article}
\usepackage[applemac]{inputenc}
\usepackage{lmodern}
\usepackage{pbsi}
\usepackage[T1]{fontenc}
\usepackage[english]{babel}
\usepackage{verbatim}
\usepackage[a4paper,vmargin={3.5cm,3.5cm},hmargin={2.5cm,2.5cm}]{geometry}
\linespread{1.2}  
\usepackage{amssymb}
\usepackage{savesym}
\usepackage{amsmath}
\savesymbol{iint}
\usepackage{amsthm}
\savesymbol{openbox}
\usepackage{pxfonts}
\restoresymbol{TXF}{iint}
\restoresymbol{TXF}{openbox}
\usepackage{graphicx}
\usepackage{color,soul,wrapfig,float}
\usepackage{pgf}
\usepackage{tikz}
\usepackage{contour}
\usepackage{newfloat}
\usepackage{placeins}
\usepackage{array}
\usepackage{tabularx}
\usepackage{hhline}
\usepackage{marginnote}
\usepackage{subcaption}
  
\usepgflibrary{decorations.pathmorphing}
\usetikzlibrary{patterns,arrows,fit,shapes.geometric,shapes,shapes.callouts,decorations.text,intersections,shadows,calc,through,decorations.pathmorphing,decorations,fadings,angles}
\tikzstyle{none}=[inner sep=0pt]
\tikzstyle{b}=[line width=4pt, draw=white]
\tikzstyle{real}=[circle,fill=black,draw=white, inner sep=2pt, very thick]
\tikzstyle{phantom}=[circle,fill=white,draw=black, inner sep=1pt]
\tikzstyle{geoone}=[very thick, orange]
\tikzstyle{geotwo}=[very thick, blue]
\tikzstyle{geothree}=[ultra thick, green!50!black]
\tikzstyle{fence}=[ultra thick, red]
\tikzstyle{map}=[red, densely dashed]
\tikzstyle{root}=[red, very thick, ->]
\tikzstyle{georight}=[very thick, green!50!black]
\tikzstyle{geoleft}=[very thick, blue]
\tikzstyle{bla}=[]
\tikzset{
        hatch distance/.store in=\hatchdistance,
        hatch distance=10pt,
        hatch thickness/.store in=\hatchthickness,
        hatch thickness=2pt
    }

    \makeatletter
    \pgfdeclarepatternformonly[\hatchdistance,\hatchthickness]{flexible hatch}
    {\pgfqpoint{0pt}{0pt}}
    {\pgfqpoint{\hatchdistance}{\hatchdistance}}
    {\pgfpoint{\hatchdistance}{\hatchdistance}}
    {
        \pgfsetcolor{\tikz@pattern@color}
        \pgfsetlinewidth{\hatchthickness}
        \pgfpathmoveto{\pgfqpoint{0pt}{0pt}}
        \pgfpathlineto{\pgfqpoint{\hatchdistance}{\hatchdistance}}
        \pgfusepath{stroke}
    }

\def\llbracket{[\hspace{-.10em} [ }
\def\rrbracket{ ] \hspace{-.10em}]}


\newcommand{\Q}{\mathcal{Q}} 
\newcommand{\UIHPQ}{\mathcal{H}_\infty} 
\newcommand{\UIHPQS}{\widetilde{\mathcal{H}}_\infty} 
\newcommand{\sQ}{\mathsf{Q}} 
\newcommand{\Ball}{\mathfrak{B}}
\newcommand{\q}{\mathfrak{q}} 
\renewcommand{\P}{\mathbb{P}} 

\newcommand{\Z}[2]{Z^{#1,#2}}
\newcommand{\SAW}[2]{\mathsf{SAW}^{#1,#2}}

\renewcommand{\phi}{\varphi}
\renewcommand{\epsilon}{\varepsilon}

\newcommand{\1}{\mathds{1}}

\newcommand{\dgr}{\mathrm{d}_\mathrm{{gr}}}

\pgfdeclarelayer{edgelayer}
\pgfdeclarelayer{nodelayer}
\pgfdeclarelayer{foreground}
\pgfsetlayers{edgelayer,nodelayer,main,foreground}

\usepackage{geometry}
\usepackage{graphicx}
\usepackage{wrapfig}
\usepackage{placeins}
\usepackage{array}
\usepackage{tabularx}
\usepackage{hhline}
\usepackage{marginnote}
\usepackage{caption}
\usepackage{subcaption}
\usepackage{fixltx2e}
\usepackage{afterpage}

\usepackage{dsfont}
\usepackage{pifont}
\usepackage{pxfonts}

\newtheorem{rem}{Remark}[section]

\theoremstyle{plain}\newtheorem{teo}{Theorem}

\newtheorem{theorem}[teo]{Theorem}

\newtheorem{proposition}[teo]{Proposition}
\newtheorem{cor}[teo]{Corollary}

\newtheorem{conj}{Conjecture}
\newtheorem{open}[]{Open question}
\newtheorem{lemma}[teo]{Lemma}
\theoremstyle{remark}
\theoremstyle{definition}\newtheorem{df}{Definition}[section]
\newtheorem{definition}[df]{Definition}

\renewcommand{\phi}{\varphi}

\title{Self-Avoiding Walks on the UIPQ}
\author{Alessandra Caraceni, Nicolas Curien}
\date{}
\usepackage{graphicx}
\usepackage[plainpages=false, hypertexnames=false]{hyperref}
\usepackage{bookmark}

\usepackage{cleveref}
  \crefname{thm}{Theorem}{Theorems}
    \crefname{teo}{Theorem}{Theorems}
      \crefname{theorem}{Theorem}{Theorems}
  \crefname{lem}{Lemma}{Lemmas}
  \crefname{lemma}{Lemma}{Lemmas}
  \crefname{remark}{Remark}{Remarks}
  \crefname{rem}{Remark}{Remarks}
  \crefname{proposition}{Proposition}{Propositions}
  \crefname{prop}{Proposition}{Propositions}
  \crefname{definition}{Definition}{Definitions}
  \crefname{cor}{Corollary}{Corollaries}
  \crefname{section}{Section}{Sections}
  \crefname{figure}{Figure}{Figures}

\begin{document}
\maketitle
\begin{abstract}We study an annealed model of Uniform Infinite Planar Quadrangulation (UIPQ) with an infinite two-sided self-avoiding walk (SAW), which can also be described as the result of glueing together two independent uniform infinite quadrangulations of the half-plane (UIHPQs). We prove a lower bound on the displacement of the SAW which, combined with the estimates of \cite{CMboundary}, shows that the self-avoiding walk is diffusive. As a byproduct this implies that the volume growth exponent of the lattice in question is $4$ (as is the case for the standard UIPQ); nevertheless, using our previous work \cite{CCuihpq} we show its law to be singular with respect to that of the standard UIPQ, that is  -- in the language of statistical physics -- the fact that disorder holds.  \end{abstract}
\section*{Introduction}

Much of the recent mathematical work on the geometry of random planar maps is focused on the ``pure gravity'' case where the random lattice is not affected by ``matter'': in probabilistic terms, this corresponds to choosing a map uniformly at random within a certain class, e.g.~triangulations, quadrangulations, $p$-angulations... In this work we study the geometry of random planar quadrangulations weighted by the number of their self-avoiding walks (SAWs for short); that is, we study the model of annealed SAWs on random quadrangulations. We start by presenting our main objects of interest: \medskip

\textsc{Surgeries.} The Uniform Infinite Quadrangulation of the Plane (UIPQ), denoted by $\Q_{\infty}$, is the local limit of uniform random quadrangulations whose size is sent to infinity. This object has been defined by Krikun \cite{Kri05} following the earlier work of Angel \& Schramm \cite{AS03} in the case of triangulations; since then the UIPQ has attracted a lot of attention, see \cite{Ang03,ACpercopeel,CLGplane,CMMinfini,GGN12} and references therein. 

One can also define a related object (see \cite{Ang05,CMboundary}) called the Uniform Infinite Quadrangulation of the Half-Plane with a Simple Boundary, or simple boundary UIHPQ, denoted here\footnote{Remark that this notation is not coherent with that of \cite{CCuihpq}, where we denoted by $\UIHPQ$ an object with a boundary that is not necessarily simple, and by $\UIHPQS$ the one that is central to this paper, obtained from $\UIHPQ$ by a pruning procedure. Since the general boundary UIHPQ will make no appearence in this paper, we shall drop the tilde with no fear of confusion.} by $\UIHPQ$, which is -- as the name suggests -- a random quadrangulation with an infinite simple boundary. 

The simple boundary UIHPQ can be obtained as the local limit of uniform quadrangulations with $n$ faces and a simple boundary of length $2p$ by first letting $n \to \infty$ and then $p \to \infty$ (see Section~\ref{sec:1} for more details).  From it, we shall construct two additional objects by means of ``surgery'' operations.
First, we define a random infinite quadrangulation of the plane by folding the infinite simple boundary of $\UIHPQ$ onto itself as in Figure~\ref{fig:folding}. The resulting map $\Q_{\infty}^{\rightarrow}$ is naturally endowed with an infinite one-ended self-avoiding path $(\mathsf{P}^{\rightarrow}_{i})_{i \geq 0}$ which is the image of the boundary of $\UIHPQ$. \medskip

 \begin{figure}[!h]
 \begin{center}
 \includegraphics[width=1\linewidth]{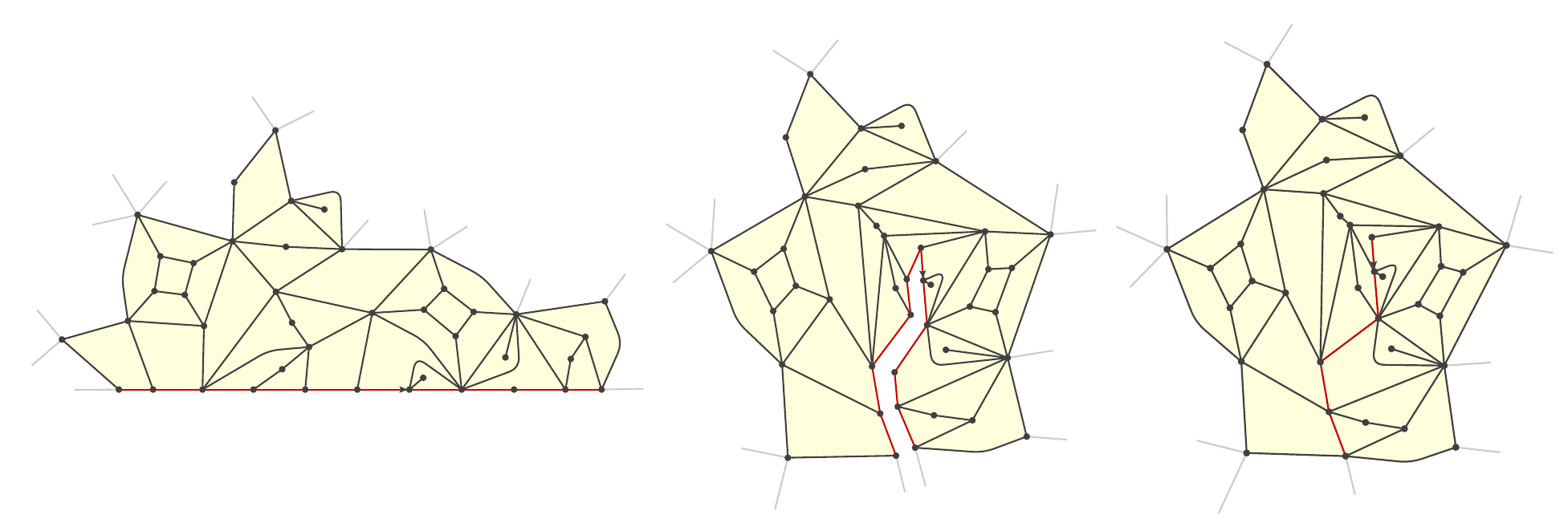}
 \caption{\label{fig:folding} A simple boundary UIHPQ and the resulting quadrangulation with a self-avoiding path obtained by folding the boundary onto itself. }
 \end{center}
 \end{figure}

We also perform a variant of the former construction. Consider two independent copies $\UIHPQ$ and  $\UIHPQ'$ of the simple boundary UIHPQ and form a quadrangulation of the plane $\Q_{\infty}^\leftrightarrow$ by glueing together $\UIHPQ$ and $\UIHPQ'$ along their boundaries (identifying the root edges with opposite orientations). This rooted infinite quadrangulation also comes with a distinguished bi-infinite self-avoiding path $({\mathsf{P}^\leftrightarrow_{i}})_{i \in \mathbb{Z}}$ resulting from the identified boundaries. 

These will be the main objects of study within this work. We will show in \cref{sec:1} that $( \Q_{\infty}^\rightarrow, {\mathsf{P}^\rightarrow})$ and $( \Q_{\infty}^\leftrightarrow, \mathsf{P}^\leftrightarrow)$ are the natural models of annealed self-avoiding walks (respectively one-sided and two-sided) on the UIPQ, which means that they can be obtained as local limits of random objects uniformly sampled among quadrangulations endowed with a self-avoiding path.  \medskip 

\textsc{Results.} According to the physics literature \cite{Dup06}, the three infinite random quadrangulations of the plane $\Q_{\infty}, \Q_{\infty}^{\rightarrow}$ and $ \Q^{\leftrightarrow}_\infty$ should be described by the same conformal field theory with central charge $c=0$; that is to say, roughly speaking, the large scale properties of $\Q_{\infty}$, $\Q_{\infty}^\rightarrow$ and $\Q_{\infty}^\leftrightarrow$ should be close to each other. We confirm this prediction by showing that these random lattices share the same volume growth exponent of $4$ (or ``Hausdorff dimension'', as it is commonly referred to by the physicists), a fact that is well-known in the case of the UIPQ, see \cite{CD06,LGM10}.  The key is to first show that the self-avoiding walks on $\Q_{\infty}^\rightarrow$ and $\Q_{\infty}^\leftrightarrow$ are  diffusive:  

\begin{theorem}[Diffusivity of the SAWs]  \label{thm:diffusive} If  $( \mathsf{P}^\rightarrow_{i})_{i \geq 0}$ and $( \mathsf{P}^\leftrightarrow_{i})_{ i \in \mathbb{Z}}$ are the edges visited by the self-avoiding walks on $ \Q_{\infty}^{\rightarrow}$ and $\Q_{\infty}^\leftrightarrow$ then we have   \begin{eqnarray} \label{eq:diffusivity} \dgr( \mathsf{P}^\rightarrow_{0}, \mathsf{P}^\rightarrow_{n})  \approx  \sqrt{n} & \mbox{ and }& \dgr( \mathsf{P}^\leftrightarrow_{0}, \mathsf{P}^\leftrightarrow_{n})  \approx    \sqrt{n}.  \end{eqnarray} 
 \end{theorem}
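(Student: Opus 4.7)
I split \eqref{eq:diffusivity} into matching upper and lower bounds. The upper bound $\dgr(\mathsf{P}_0, \mathsf{P}_n) \le C\sqrt{n}$ transfers from \cite{CMboundary}: in $\UIHPQ$, two boundary vertices at boundary distance $n$ lie within graph distance $O(\sqrt{n})$ with high probability, and since folding or gluing along the boundary can only decrease distances, this bound carries over immediately to $\Q_\infty^\rightarrow$ and $\Q_\infty^\leftrightarrow$. The substantive content is therefore the lower bound $\dgr(\mathsf{P}_0,\mathsf{P}_n) \ge c\sqrt{n}$.

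I treat the two-sided case $\Q_\infty^\leftrightarrow$ first, since by construction it is two independent copies of $\UIHPQ$ glued along their boundaries. Any path $\gamma$ in $\Q_\infty^\leftrightarrow$ from $\mathsf{P}_0^\leftrightarrow$ to $\mathsf{P}_n^\leftrightarrow$ of total length $L$ decomposes into consecutive maximal arcs $\gamma_1,\dots,\gamma_k$ each contained entirely within one of the two copies of $\UIHPQ$, with successive endpoints $B_0 = \mathsf{P}_0^\leftrightarrow, B_1,\dots, B_k = \mathsf{P}_n^\leftrightarrow$ lying on the common boundary. Writing $L_i$ for the length of $\gamma_i$ and $\pi_i$ for the boundary position of $B_i$, one has $\sum_i L_i = L$, $\pi_0 = 0$ and $\pi_k = n$.

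The key input is a per-arc estimate: because $B_i$ lies within graph distance $L_i$ of $B_{i-1}$ in one of the two UIHPQs, it belongs to the corresponding hull of radius $L_i$ around $B_{i-1}$. The perimeter estimates of \cite{CMboundary} give that, with high probability, such a hull of radius $R$ around a boundary vertex contains $O(R^2)$ consecutive boundary vertices, whence $|\pi_i - \pi_{i-1}| \le C L_i^2$. Summing, and using $n \le \sum_i |\pi_i - \pi_{i-1}|$ together with $\sum_i L_i^2 \le \bigl(\sum_i L_i\bigr)^2 = L^2$, yields $n \le C L^2$; minimizing over geodesics then gives $\dgr(\mathsf{P}_0^\leftrightarrow, \mathsf{P}_n^\leftrightarrow) \ge c\sqrt{n}$.

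The main obstacle will be to upgrade this per-arc bound to a uniform statement: the $B_i$ are random and depend on the geodesic, so the hull estimate of \cite{CMboundary} cannot be invoked at each $B_i$ naively. I expect this to require translation invariance of $\UIHPQ$ along its boundary together with sharp tail bounds on hull perimeters, enabling a union bound over the $O(\sqrt{n})$ boundary vertices that can realistically appear as intermediate points in a geodesic of length $O(\sqrt{n})$. The one-sided case $\Q_\infty^\rightarrow$ proceeds along the same lines after reinterpreting a path in the folded map as a walk in a single $\UIHPQ$ that may traverse boundary identifications at zero cost: the same arc decomposition and hull-perimeter bound yield the matching $\sqrt{n}$ lower bound.
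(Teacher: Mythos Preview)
Your upper bound is correct and matches the paper's: distances in $\Q_\infty^\rightarrow$ and $\Q_\infty^\leftrightarrow$ are dominated by distances in $\UIHPQ$, and the estimate $d_{\mathrm{gr}}^{\UIHPQ}(x_0,x_n)\preceq\sqrt{n}$ (Proposition~6.1 of \cite{CCuihpq}) transfers directly.

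Your lower bound strategy is genuinely different from the paper's, and the obstacle you flag is a real gap that your proposed fix does not close. The paper does \emph{not} try to control boundary displacement of geodesic arcs. Instead it constructs, via a peeling algorithm, a nested sequence of ``fences'' $P'_1,P'_2,\dots$ in $\UIHPQ$: disjoint paths joining symmetric boundary points $x_{-r(i)}$ and $x_{r(i)}$, which after folding (resp.\ gluing) become nested loops separating the origin from $\mathsf{P}_n$. Any path to $\mathsf{P}_n$ must cross every loop with $r(i)\le n$, so the lower bound reduces to $r(n)\preceq n^2$. The crucial point is the spatial Markov property: each fence is built inside a \emph{fresh} UIHPQ (the complement of what has been explored), so the successive overshoots can be stochastically dominated by an i.i.d.\ sequence $(O_i)$ with $\P(O_i>t)\le Ct^{-1/2}$, and then $r(n)\le\sum_{i\le n}O_i\preceq n^2$ by standard heavy-tail sum estimates.

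Your arc decomposition instead needs $|\pi_i-\pi_{i-1}|\le C L_i^2$ to hold \emph{simultaneously} at every crossing point $B_{i-1}$ that the geodesic chooses, and this is where the union bound breaks down. First, the ``$O(\sqrt n)$ boundary vertices that can realistically appear'' are not known in advance: the geodesic selects them adaptively, and a priori any of the $\Theta(n)$ boundary vertices in the relevant window (and some outside it) could serve as a $B_i$, so the union bound must run over $\Theta(n)$ sites, not $O(\sqrt n)$. Second, the relevant tails are heavy: the boundary overshoot of the hull around a fixed boundary vertex has only a polynomial tail of order $t^{-1/2}$ (this is exactly the content of the paper's Lemma~\ref{lem:overshoot estimate} and Corollary~\ref{cor:coupling}), far too slow to survive a union bound over $n$ sites and a range of radii. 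Third, and most seriously, the geodesic is not independent of the local geometry: it will preferentially exploit precisely those rare ``long shortcut'' faces at which $|\pi_i-\pi_{i-1}|\gg L_i^2$. Once you condition on the geodesic passing through $B_{i-1}$ you have conditioned on the map, and the hull estimate centred there is no longer a fresh sample. The spatial Markov property that drives the fence construction is exactly what is missing from your scheme; without a replacement for it, the per-arc estimate cannot be made uniform.
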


\noindent \textbf{Notation:} Here and later, for a random process $(X_{n})_{n \geq 0}$ with values in $ \mathbb{R}_{+}$ and a function $f : \mathbb{Z}_{+} \to \mathbb{R}_{+}$, we write $ X_{n} \preceq f(n)$ if $$ \displaystyle \lim_{a \to \infty} \limsup_{n \to \infty} \mathbb{P}( X_{n} > a f(n))=0$$ and similarly for $X_{n} \succeq f(n)$ with the reversed inequality and $a$ tending to $0$. We write $X_{n} \approx f(n)$ if we have both $X_{n} \preceq f(n)$ and $X_{n} \succeq f(n)$. \medskip 

The ball  of radius $r$ in a planar quadrangulation $  \mathfrak{q}$ is the map $ \Ball_r( \mathfrak{q})$ obtained by keeping only those (internal) faces of $ \mathfrak{q}$ that have at least one vertex at graph distance smaller than or equal to $r$ from the origin of the map (as usual all our maps are \emph{rooted}, that is given with one distinguished oriented edge whose tail vertex is the origin of the map) and keeping the root edge. The notation $\#  \Ball_{r}( \mathfrak{q})$ stands for the number of vertices in $\Ball_{r}( \mathfrak{q})$.

 \begin{cor}[Volume growth] \label{cor:volumegrowth} We have $\# \Ball_{r}( \Q^\rightarrow_{\infty}) \approx r^4$ as well as  $ \# \Ball_{r}( \Q^\leftrightarrow_{\infty}) \approx  r^4.$
 \end{cor}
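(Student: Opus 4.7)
The strategy is to deduce the result from \cref{thm:diffusive} together with the volume and boundary estimates for the simple boundary UIHPQ recalled in \cite{CMboundary}. I focus on $\Q^\leftrightarrow_\infty$, obtained by glueing two independent copies $\UIHPQ_1, \UIHPQ_2$ of the simple boundary UIHPQ along their boundaries (the one-sided case $\Q^\rightarrow_\infty$ is entirely analogous). For the \emph{lower bound}, since every path in $\UIHPQ_1$ is also a path in $\Q^\leftrightarrow_\infty$, graph distances from the root in $\Q^\leftrightarrow_\infty$ are dominated by those in $\UIHPQ_1$; hence the intrinsic ball of radius $r$ around the root of $\UIHPQ_1$ sits inside $\Ball_r(\Q^\leftrightarrow_\infty)$ and has $\succeq r^4$ vertices by the UIHPQ volume growth estimate.

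For the \emph{upper bound}, fix $v \in \Ball_r(\Q^\leftrightarrow_\infty)$ and assume $v \in \UIHPQ_1$ (the other case is symmetric). Consider a geodesic from the root $P_0 = 0$ to $v$ in $\Q^\leftrightarrow_\infty$ and let $P_j$ be the last SAW vertex that it visits. The sub-geodesic from $P_j$ to $v$ lies inside $\UIHPQ_1$ and has length $\leq r$, while $\dgr^{\Q^\leftrightarrow_\infty}(P_0, P_j) \leq r$. By \cref{thm:diffusive}, the latter forces $|j| \leq Cr^2$ with probability tending to $1$ as $C \to \infty$, uniformly in $r$; on this event,
\[
\UIHPQ_1 \cap \Ball_r(\Q^\leftrightarrow_\infty) \;\subset\; \bigcup_{|j| \leq Cr^2} \Ball_r^{\UIHPQ_1}(P_j),
\]
where $\Ball_r^{\UIHPQ_1}(P_j)$ denotes the ball of radius $r$ around $P_j$ in $\UIHPQ_1$. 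The simple boundary UIHPQ is itself diffusive along its boundary, an estimate $\dgr^{\UIHPQ_1}(P_0, P_j) \preceq \sqrt{|j|}$ recorded in \cite{CMboundary}, so every $P_j$ with $|j| \leq Cr^2$ lies at $\UIHPQ_1$-distance $O(r)$ from $P_0$. The triangle inequality then places the union inside the ball of radius $O(r)$ around $P_0$ in $\UIHPQ_1$, which has $O(r^4)$ vertices by the UIHPQ volume estimate. Adding the symmetric contribution from $\UIHPQ_2$ yields $\# \Ball_r(\Q^\leftrightarrow_\infty) \preceq r^4$.

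The main subtlety is the danger of overcounting: diffusivity alone yields about $r^2$ SAW vertices within $\Q^\leftrightarrow$-distance $r$ of the root, each supporting an $r$-ball of size $r^4$ in its UIHPQ, naively giving the much too large bound $r^6$. The rescue is that SAW diffusivity on $\Q^\leftrightarrow$ together with UIHPQ boundary diffusivity force these SAW vertices to cluster within UIHPQ-distance $O(r)$ of $P_0$, so that their individual $r$-neighborhoods collapse into a single UIHPQ ball of radius $O(r)$. The remaining technical point is to upgrade the in-probability statements of \cref{thm:diffusive} to a simultaneous control over all indices $|j| \leq Cr^2$, which follows by a standard tightness argument using the $1$-Lipschitz property of $n \mapsto \dgr^{\Q^\leftrightarrow_\infty}(P_0, P_n)$.
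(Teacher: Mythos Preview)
Your argument follows essentially the same route as the paper's: the lower bound comes from the inclusion $\Ball_r(\UIHPQ)\subseteq\Ball_r(\Q^\leftrightarrow_\infty)$, and the upper bound from (i) using SAW diffusivity to bound the number of walk vertices inside $\Ball_r$ by $\preceq r^2$, (ii) using UIHPQ boundary diffusivity to place those vertices within UIHPQ-distance $\preceq r$ of the origin, and (iii) applying the UIHPQ volume estimate. The paper packages (i)--(iii) as the single inclusion $\Ball_r(\Q^\rightarrow_\infty)\subseteq\Ball_{M_r+r}(\UIHPQ)$ with $M_r\approx r$, but the content is identical, and your digression on why the naive $r^6$ bound collapses to $r^4$ is exactly the mechanism the paper exploits.

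One genuine wrinkle: your last sentence asserts that the upgrade from the pointwise statement of \cref{thm:diffusive} to simultaneous control follows from the $1$-Lipschitz property of $n\mapsto\dgr^{\Q^\leftrightarrow_\infty}(P_0,P_n)$. This does not work. First, you actually need uniform control for indices $|j|>Cr^2$ (to rule out any of them being within distance $r$), not for $|j|\leq Cr^2$ as you write; second, $1$-Lipschitz alone cannot prevent the distance from dipping back below $r$ at some large index, nor can it yield a uniform maximum bound without tail estimates and a union bound. What actually supplies the uniformity is the \emph{fences} construction of Section~\ref{sec:displacement}: the nested loops separate the origin from \emph{every} $P_j$ with $|j|\geq r(n)$, giving $\dgr(P_0,P_j)\geq n$ simultaneously for all such $j$, and then $r(n)\preceq n^2$ does the job. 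The paper leans on this (implicitly) when it writes ``By \cref{thm:diffusive} the length $L_r$ of this portion is $\preceq r^2$''. Likewise, the uniform UIHPQ boundary estimate $\max_{|j|\leq Cr^2}\dgr^{\UIHPQ}(P_0,P_j)\preceq r$ is quoted from \cite[Proposition~6.1]{CCuihpq} (rather than \cite{CMboundary}), where it is established directly via the tree encoding rather than bootstrapped from a pointwise bound.
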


Since graph distances in $\Q^\rightarrow_\infty$ and in $\Q^\leftrightarrow_{\infty}$ are trivially bounded above by distances between corresponding vertices in $\UIHPQ$ (and $\UIHPQ'$) the lower bound for the volume growth in  $\Q^\rightarrow_\infty$ and $\Q^\leftrightarrow_\infty$ follows from known results on the geometry of the UIHPQ. The nontrivial part of the statement is the upper bound, for whose proof we employ a lower bound on the displacement of the self-avoiding paths $ \mathsf{P}^\rightarrow$ and $ \mathsf{P}^{\leftrightarrow}$ (the upper bound also follows from known results on the UIHPQ \cite{CCuihpq}).  

Although $\Q_{\infty}^\leftrightarrow$ and $\Q_{\infty}$ share the same volume growth exponent, we show that their laws are very different:

\begin{theorem}[Glueing two half-planes does not produce a plane]  \label{thm:singular}The two random variables $\Q_{\infty}$ and $ \Q_{\infty}^\leftrightarrow$ are singular with respect to each other.
\end{theorem}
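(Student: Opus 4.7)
The goal is to exhibit a Borel event $A$ in the space of rooted infinite planar quadrangulations (with the local topology) such that $\P(\Q_\infty^\leftrightarrow \in A) = 1$ while $\P(\Q_\infty \in A) = 0$. The structural feature we want to detect is that, under $\Q_\infty^\leftrightarrow$, the root edge lies on the bi-infinite self-avoiding path $\mathsf{P}^\leftrightarrow$ whose removal decomposes the map into two independent copies of $\UIHPQ$.

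The natural candidate for $A$ is the event that some asymptotic geometric invariant of the hulls takes a specific value. Writing $\overline{\Ball}_r(\mathfrak{q})$ for the hull of radius $r$ (that is, $\Ball_r(\mathfrak{q})$ together with all the finite components of its complement), the glueing construction implies that $\overline{\Ball}_r(\Q_\infty^\leftrightarrow)$ is built by glueing two corresponding hulls in two independent UIHPQs. Using our earlier results from \cite{CCuihpq} on the geometry of $\UIHPQ$---in particular on the perimeter, volume and peeling asymptotics of its hulls around boundary points---one can extract a functional $F$ of the rooted map, for instance a normalised limit of the perimeter process or an appropriate ratio of perimeter to volume, whose almost-sure limit under $\Q_\infty^\leftrightarrow$ is a computable constant $c^\leftrightarrow$.

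On the other hand, the known results on the peeling and hull process of the UIPQ (Curien--Le~Gall--Miermont) supply the corresponding constant $c$ under $\Q_\infty$. If $c \ne c^\leftrightarrow$, then setting $A$ to be the event that $F$ converges to $c^\leftrightarrow$ concludes the proof.

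The main obstacle is precisely establishing the inequality $c \ne c^\leftrightarrow$. The two constants arise from different peeling mechanisms (one starting in the bulk, the other starting from a boundary edge and receiving the contribution of two independent UIHPQs), so one expects them to differ; however, verifying this rigorously requires carefully inspecting the jump-distribution of the UIHPQ peeling process provided by \cite{CCuihpq} and comparing it with that of the UIPQ. Once this quantitative step is completed, singularity follows by a standard argument along the sequence of hull radii.
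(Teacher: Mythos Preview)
Your plan hinges on the existence of a functional $F$ of the rooted map that converges \emph{almost surely} to a deterministic constant, with different constants under the two laws. This is the genuine gap. For the natural observables---volume $\#\Ball_r$, hull volume, hull perimeter---the known results (Le~Gall--M\'enard for the UIPQ, \cite{CCuihpq} for the UIHPQ) give only convergence \emph{in distribution}: $r^{-4}\#\Ball_r(\Q_\infty) \to \mathcal{V}_p$ and $r^{-4}\#\Ball_r(\UIHPQ) \to \mathcal{V}_h$ are genuinely random limits, and the same is true for the rescaled hull perimeter (in the Brownian-plane scaling limit the hull perimeter process is a nondegenerate self-similar process). No simple ratio or normalised limit is known to collapse to a constant, so the constants $c$, $c^\leftrightarrow$ you postulate do not obviously exist. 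The ``main obstacle'' you identify ($c \neq c^\leftrightarrow$) is therefore secondary to the more basic problem that an almost-sure constant limit has not been produced.

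The paper's proof is close in spirit but supplies precisely the missing idea: it turns a \emph{distributional} discrepancy into an almost-sure one by averaging over many scales. From $\mathbb{E}[\mathcal{V}_h]=\tfrac{7}{9}\,\mathbb{E}[\mathcal{V}_p]$ one gets $\mathbb{E}[\mathcal{V}_h+\mathcal{V}_h']>\mathbb{E}[\mathcal{V}_p]$, hence a threshold $\alpha$ with $\mathbb{P}(\mathcal{V}_h+\mathcal{V}_h'>\alpha)>\mathbb{P}(\mathcal{V}_p>\alpha)$. One then looks at the indicators $\mathcal{X}_r=\mathbf{1}\{\#\Ball_r>\alpha r^4\}$ along a sparse sequence $r_1\ll r_2\ll\cdots$ chosen, via the spatial Markov property of $\Q_\infty$ and of $\UIHPQ$, so that the $\mathcal{X}_{r_i}$ are asymptotically decorrelated. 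A law of large numbers for weakly correlated variables forces $k^{-1}\sum_{i\le k}\mathcal{X}_{r_i}(\Q_\infty)\to\mathbb{P}(\mathcal{V}_p>\alpha)$ almost surely, while on $\Q_\infty^\leftrightarrow$ (using $\#\Ball_r(\Q_\infty^\leftrightarrow)\ge \#\widetilde\Ball_r(\UIHPQ)+\#\widetilde\Ball_r(\UIHPQ')$) the $\liminf$ is at least $\mathbb{P}(\mathcal{V}_h+\mathcal{V}_h'>\alpha)$. This multi-scale averaging is what manufactures the separating almost-sure event; without it your outline does not close.
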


In the language of statistical mechanics, the former result shows that \emph{disorder} holds on the UIPQ, meaning that the (quenched) number of SAWs on $\Q_{\infty}$ is typically much less than its expectation (see Corollary~\ref{cor:fewSAW}). However, as we shall see, the proof of~Theorem \ref{thm:singular} does not involve enumerating self-avoiding walks, and is instead based on a volume argument. Unfortunately, this argument does not yield a proof of the similar result for $\Q^\rightarrow_\infty$ instead of $ \Q^{\leftrightarrow}_{\infty}$, see Conjecture~\ref{conj:singular}.

\textsc{Techniques.} In order to understand the geometry of the quadrangulations obtained from surgical operations involving the (simple boundary) UIHPQ, it is first necessary to deeply understand the geometry of the UIHPQ itself. To this end we devoted the paper \cite{CCuihpq}, in which we first considered a general boundary UIHPQ, also obtained as a local limit of uniform random quadrangulations with a boundary (on which no simplicity constraint is imposed, see \cite{CMboundary}), whose study is simpler thanks to its construction ``\`a la Schaeffer'' from a random infinite labelled tree.  The results of \cite{CCuihpq} and \cite{CMboundary} are nonetheless easily transferred to our context and yield the upper bound in Theorem \ref{thm:diffusive} and the lower bound in Corollary \ref{cor:volumegrowth}.

In order to prove the diffusive lower bound for the self-avoiding walks, the main idea is to construct disjoint paths in the UIHPQ whose endpoints lie on the boundary and are symmetric around the origin, so that after the folding of the boundary these paths become disjoint nested loops separating the origin from infinity in $\Q_{\infty}^\rightarrow$, see Figure~\ref{fig:fences one-ended} (a similar geometric construction is made in the case of $\Q_{\infty}^\leftrightarrow$). We build these paths inductively as close to each other as possible using the technique of peeling (see \cite{Ang05}) on the UIHPQ. We prove that we can construct $\approx n$ such paths on a piece of boundary of length $\approx n^2$ around the origin. Through the ``folding'' operation, this will yield the diffusivity of the self-avoiding walk $ \mathsf{P}^\rightarrow$ (resp.~$\mathsf{P}^{{\leftrightarrow}}$). Corollary~\ref{cor:volumegrowth} then follows easily by using rough bounds on the volume of balls in the UIHPQ.

The main ingredient in the proof of Theorem \ref{thm:singular} is a series of \emph{precise} estimates of the volume growth in the UIPQ and in its half-plane analogue. Indeed, the work of Le Gall \& M\'enard on the UIPQ \cite{LGM10,LGM10erratum} (see also \cite[Chapitre 4]{BouPHD}) as well as our previous work on the UIHPQ \cite{CCuihpq} show\footnote{Actually the works  \cite{CCuihpq,LGM10,LGM10erratum} show a convergence in distribution and one needs to prove uniform integrability to be able to pass to the expectation. We do not give the details since the actual proof bypasses this technical issue.} that 
 \begin{eqnarray} \label{eq:volumeexact}  \mathbb{E}[\#  \Ball_{r}(\Q_{\infty})] \sim \frac{3}{28} r^4\quad \mbox{ and } \quad  \mathbb{E}[\#  \Ball_{r}(\UIHPQ)] \sim \frac{1}{12} r^4, \mbox{ as}\quad  r \to \infty. 
 \end{eqnarray} 
As the reader will see, the constants in the above display are crucial for our purpose: since the surgeries used to create $\Q_{\infty}^\rightarrow$ and $\Q_{\infty}^\leftrightarrow$ from $ \UIHPQ $ can only decrease distances we deduce that   $$ \mathbb{E}[\# \Ball_{r}(\Q_{\infty}^\rightarrow)] \geq r^4/12 \quad \mbox{ and } \quad \mathbb{E}[\# \Ball_{r}(\Q_{\infty}^\leftrightarrow)] \geq 2 \times \frac{1}{12}r^4 = \frac{1}{6} r^{4}, \quad \mbox{ as }r \to \infty.$$ Finally, the fact that $1/6 > 3/28$ implies that balls (of large radius) around the origin in $ \Q_{\infty}^{\leftrightarrow}$ are typically larger than those in $ \Q_{\infty}$. This fact applied to different scales (so that the corresponding balls are roughly independent) is the core of the proof of Theorem \ref{thm:singular}. However, since $1/12 < 3/28$, this strategy does not work directly to prove that  the laws of $\Q_{\infty}$ and of $\Q_{\infty}^\rightarrow$ are singular with respect to each other. \medskip

{\noindent \textbf{Remark:} During the final stages of this work we became aware of the recent progresses of Gwynne and Miller \cite{GM16a,GM16b,GM16c}, who study the scaling limits in the Gromov-Hausdorff sense of the objects considered in this paper. In particular in \cite{GM16a} they prove, roughly speaking, that the glueing of random planar quadrangulations along their boundaries defines a proper glueing operation in the continuous setting after taking the scaling limit (i.e.~the image of the boundaries is a simple curve and the quotient metric does not collapse along the boundary). To do so, they use a peeling procedure which is equivalent to the one we study in Section \ref{sec:displacement}. However, the estimates provided in \cite{GM16a} are much more precise than those required and proved in this paper (their work thus greatly improves upon our Section \ref{sec:displacement}). Using the powerful theory developed by Miller \& Sheffield, the work \cite{GM16a} combined with \cite{GM16b} yields an impressive description of the Brownian surfaces glued along their boundaries in terms of $ \sqrt{8/3}$-Liouville Quantum Gravity surfaces. In particular according to \cite{GM16a}, the scaling limit of $\Q_\infty^{\leftrightarrow}$ is a weight $4$-quantum cone, the scaling limit of $\Q_\infty^{\rightarrow}$ is a weight $2$-quantum cone, whereas the Brownian plane (scaling limit of $\Q_\infty$ itself) is a weight $4/3$-quantum cone. This difference of laws in the scaling limit could probably be used instead of  \eqref{eq:volumeexact} as the main input to prove Theorem \ref{thm:singular} and could probably yield a proof of our Conjecture \ref{conj:singular}. \medskip

\noindent \textbf{Acknowledgments: } We thank J\'er\'emie Bouttier for fruitful discussion as well as for providing us with an alternative derivation of \eqref{eq:volumeexact} based on \cite{BG09}. We are also grateful to Jason Miller for a discussion about \cite{GM16a,GM16b} and Section \ref{sec:conj}. Figure 1 has been done via Timothy Budd's software.}

\tableofcontents

\section{Annealed self-avoiding walks on quadrangulations}
\label{sec:1}
We start by recalling notation and classical convergence results about random quadrangulations with a boundary. The curious reader may consult \cite{Ang03,CCuihpq,CMboundary} for details.

\subsection{Quadrangulations with a boundary}\label{sec:quad with a boundary}

Recall that all the maps we consider here are planar and \emph{rooted}, that is endowed with one distinguished oriented edge whose tail vertex 
is called the origin of the map.

 A \emph{quadrangulation with a boundary} $\q$ is a planar map all of whose faces have degree four, with the possible exception of the face lying directly \emph{to the right} of the root edge (also called the \emph{external face}, or \emph{outerface}). The external face of $\q$, whose boundary is called by extension the boundary of $\q$, necessarily has even degree (since $\q$ is bipartite); we refer to this degree as the \emph{perimeter} of $\q$, while the \emph{size} of $\q$ is the number of its faces minus 1 (so the external face is excluded). We say that $\q$ has a \emph{simple} boundary if its boundary has no pinch point, that is, if it is a cycle with no self-intersection (see Figure~\ref{fig:simple boundary finite}).
  
We denote\footnote{Notice that this is in contrast with the notation of~\cite{CCuihpq}, where a distinction needed to be made between quadrangulations with a general boundary and ones whose boundary was required to be simple, which we usually signalled with a ``tilde'' over the relevant symbol.} by $\sQ_{n,p}$ the set of all rooted quadrangulations with a simple boundary having size $n$ and perimeter $2p$ (and by $\# \sQ_{n,p}$ its cardinal). Within this paper, \emph{all quadrangulations with a boundary will be implicitly required to have a simple boundary, unless otherwise stated.}

 By convention, the set $\sQ_{0,0}$ contains a unique ``vertex'' map; more importantly, $\sQ_{0,1}$ is the set containing the unique map with one oriented edge (which has a simple boundary and no inner face). We remark that any quadrangulation with a boundary of perimeter $2$ can be seen as a rooted quadrangulation of the sphere (i.e.~without a boundary) by contracting the external face of degree two (see Figure~\ref{fig:Qn1 to Qn}); thus the set $\sQ_{n,1}$ can be identified with the set of all (rooted) quadrangulations of the sphere with $n$ faces, which we denote by $\sQ_n$. 
 
 \begin{figure}[h]\hspace{.05\textwidth}
 \begin{subfigure}[b]{.25\textwidth}\centering
\begin{tikzpicture}[scale=.6]
	\begin{pgfonlayer}{nodelayer}
		\node [style=real] (0) at (-3, -0) {};
		\node [style=real] (1) at (0, 3) {};
		\node [style=real] (2) at (3, -0) {};
		\node [style=real] (3) at (0, -3) {};
		\node [style=real] (4) at (-2.25, -2.25) {};
		\node [style=real] (5) at (-2.25, 2.25) {};
		\node [style=real] (6) at (2.25, 2.25) {};
		\node [style=real] (7) at (2.25, -2.25) {};
		\node [style=real] (8) at (-1.75, -0) {};
		\node [style=real] (9) at (-1, -1.75) {};
		\node [style=real] (10) at (0.5, 0.75) {};
		\node [style=real] (11) at (0, -1) {};
		\node [style=real] (12) at (1.25, -1) {};
		\node [style=real] (13) at (-1, 2) {};
	\end{pgfonlayer}
	\begin{pgfonlayer}{edgelayer}
		\draw[root] (4) to (3);
		\draw[very thick] (3) to (7);
		\draw[very thick] (7) to (2);
		\draw[very thick] (2) to (6);
		\draw[very thick] (6) to (1);
		\draw [bend left=75, looseness=1.50] (1) to (5);
		\draw[very thick] (5) to (0);
		\draw[very thick] (0) to (4);
		\draw (5) to (13);
		\draw[very thick] (5) to (1);
		\draw (5) to (8);
		\draw (8) to (4);
		\draw (8) to (10);
		\draw (10) to (1);
		\draw (8) to (9);
		\draw (9) to (3);
		\draw [bend left, looseness=1.00] (3) to (10);
		\draw (10) to (11);
		\draw (10) to (3);
		\draw (10) to (12);
		\draw (12) to (7);
		\draw (10) to (2);
	\end{pgfonlayer}
\end{tikzpicture}
\caption{\label{fig:simple boundary finite}} 
\end{subfigure}\hspace{.1\textwidth}
\begin{subfigure}[b]{.6\textwidth}\centering
\begin{tikzpicture}[scale=.4]
	\begin{pgfonlayer}{nodelayer}
		\node [style=real] (0) at (-4, -0) {};
		\node [style=real] (1) at (4, -0) {};
		\node [style=real] (2) at (-1.5, -0) {};
		\node [style=real] (3) at (1.5, -0) {};
		\node [style=real] (4) at (0, -0) {};
	\end{pgfonlayer}
	\begin{pgfonlayer}{edgelayer}
		\draw [very thick,bend left=45, looseness=1.00] (0) to (1);
		\draw [root,bend right=45, looseness=1.00] (0) to (1);
		\draw (0) to (2);
		\draw (3) to (1);
		\draw [bend left=75, looseness=1.50] (2) to (3);
		\draw [bend right=75, looseness=1.50] (2) to (3);
		\draw (4) to (3);
		\draw [densely dashed, ->, bend left=165, looseness=2.00] (0,2) to (0,-2);
	\end{pgfonlayer}
\end{tikzpicture}
\begin{tikzpicture}[scale=.5]
	\begin{pgfonlayer}{nodelayer}
		\node [style=real] (0) at (-3.25, 0.5) {};
		\node [style=real] (1) at (3.25, 0.5) {};
		\node [style=real] (2) at (-1.5, -0) {};
		\node [style=real] (3) at (1.5, -0) {};
		\node [style=real] (4) at (0, -0) {};
	\end{pgfonlayer}
	\begin{pgfonlayer}{edgelayer}
		\draw [root,dash pattern=on 14pt off 3pt on 3pt off 3pt,bend left=135, looseness=1.00] (0) to (1);
		\draw[black, shading=ball, ball color=white, opacity=.3] (0.02,0.6) circle (104pt);
		\draw [bend right=15, looseness=1.00] (0) to (2);
		\draw [bend right=15, looseness=1.00] (3) to (1);
		\draw [bend left=75, looseness=1.50] (2) to (3);
		\draw [bend right=75, looseness=1.50] (2) to (3);
		\draw (4) to (3);

	\end{pgfonlayer}
\end{tikzpicture}
\caption{\label{fig:Qn1 to Qn}} 
\end{subfigure}
\caption{(a) A quadrangulation in $\sQ_{9,4}$. (b) The two boundary edges of the above quadrangulation from $\sQ_{3,1}$ are ``glued together'' to obtain a rooted quadrangulation of the sphere with three faces (i.e.~an element of $\sQ_3$) on the right.} 
\end{figure}
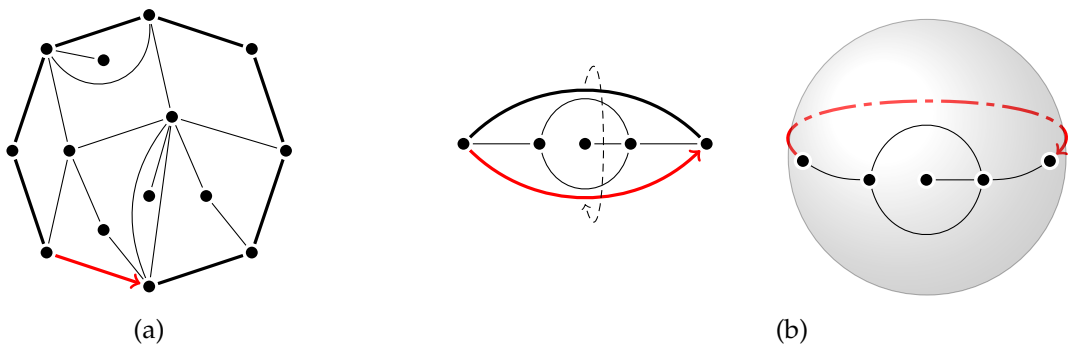

 From \cite[Eq.~(2.11)]{BG09}, we report estimates for the cardinals of the sets $\sQ_{n,p}$, where $n \geq 0, p \geq 1$:
\begin{eqnarray}
\displaystyle { \# \sQ}_{n,p} \quad = & \displaystyle 3^{-p} \frac{(3p)!}{p!(2p-1)!} 3^n\frac{(2n+p-1)!}{(n-p+1)!(n+2p)!}, & \underset{n \to \infty}{\sim} \quad {C}_{p} 12^n n^{-5/2}, \label{asympqnpt} \\
{C}_{p} \quad  = & \displaystyle \frac{1}{2 \sqrt{\pi}}  \frac{(3p)!}{p!(2p-1)!}\left(\frac{2}{3}\right)^p  & \underset{p \to \infty}{\sim} \quad   \frac{\sqrt{3p}}{2\pi}\left(\frac{9}{2}\right)^p. \label{asympcp}
\end{eqnarray} 

The sum of the series $\sum_{n\geq 0}
\# \sQ_{n,p}12^{-n}$ (which is finite) is classically denoted by $Z(p)$ and can be explicitly computed: we have $Z(1)= \frac{4}{3}$ and for $p \geq 2$,
 \begin{eqnarray}
  Z(p) \quad = \quad  2 \left( \frac{2}{3} \right)^p\frac{(3p-3)!}{p! (2p-1)!} \quad  \underset{p \to \infty}{\sim} \quad \frac{2}{9 \sqrt{3\pi}}p^{-5/2} \left(\frac{9}{2}\right)^p. 
  \end{eqnarray}
One can define a Boltzmann quadrangulation of the $2p$-gon as a random variable with values in $\bigcup_{n\geq 0} \sQ_{n,p}$, distributed according to the measure that assigns a weight
  $12^{-n} Z(p)^{-1}$ to each map in $ \sQ_{n,p}$.

  \bigskip
  In what follows, for all $n\geq 0$ and $p \geq 1$, we shall denote by $\Q_{n,p}$ a random variable uniformly distributed over $ \sQ_{n,p}$. When $p=1$ we also denote $\Q_{n}:=\Q_{n,1}$ a uniform quadrangulation with $n$ faces.

\subsection{Uniform Infinite (Half-)Planar Quadrangulations}

Recall that if $ \mathfrak{q}, \mathfrak{q}'$ are two rooted (planar) quadrangulations (with or without a boundary), the local distance between the two is
 \begin{eqnarray} \label{eq:deflocal} \mathrm{d_{loc}}( \mathfrak{q}, \mathfrak{q}') &=& \big(1+ \sup\{r\geq 0 : \Ball_r( \mathfrak{q})=  \Ball_r( \mathfrak{q}')\}\big)^{-1}, \end{eqnarray}
where $\Ball_r( \mathfrak{q})$ is obtained by erasing from $ \mathfrak{q}$ everything but those inner faces that have at least one vertex at distance smaller than or equal to $r$ from the origin (thus the outerface is not automatically preserved if $ \mathfrak{q}$ has a boundary). The set of all finite quadrangulations with a boundary is not complete for this metric: we shall work in its completion, obtained by adding locally finite infinite quadrangulations with a finite or infinite simple boundary, see \cite{CMMinfini} for details. Recall that $\Q_{n,p}$ is uniformly distributed over $ \sQ_{n,p}$. The following convergences in distribution for $ \mathrm{d_{loc}}$ are by now well known:
\begin{eqnarray} \Q_{n,p}  \quad \xrightarrow[n\to \infty]{(d)} \quad  \Q_{\infty,p} \quad \xrightarrow[p\to\infty]{(d)} \quad \UIHPQ.  \label{def:UIPQ} \label{def:uipq2pgon} \end{eqnarray}
The first convergence in the special case $p=1$ constitutes the definition of the UIPQ by Krikun \cite{Kri05}; the second one is found in  \cite{CMboundary} (see also the pioneering work \cite{Ang05} concerning the triangulation case). The object $\Q_{\infty,p}$ is the so-called UIPQ of the $2p$-gon and  $\UIHPQ$ is the simple boundary UIHPQ. 

It is worthwhile to note (such a fact will be useful later) that $\UIHPQ$ enjoys a property of invariance under rerooting: if we shift the root edge by one along the boundary (to the left or right), the random map thus obtained still has the law of a (simple boundary) UIHPQ.

\subsection{Zipper}

Let us now give a precise definition of a self-avoiding path:

\begin{definition}
Let $ \mathfrak{q}$ be a (finite or infinite) planar quadrangulation, and let $b  \in \{0,1, \ldots \} \cup \{\infty\}$, $f \in \{1,2, \ldots \} \cup \{ \infty\}$ (``$b$'' stands for backward and ``$f$'' for forward). A $(b,f)$-SAW  on $ \q$ is a sequence $$  \mathbf{w}=(\vec{e}_{i})_{-b\leq i<f}$$
of successive oriented edges of the map, where $\vec{e}_{i}$ has tail vertex $x_i$ and target vertex $x_{i+1}$, so that the target of $\vec{e}_i$ coincides with the tail of $\vec{e}_{i+1}$, the oriented edge $\vec{e}_0$ is the root of $\q$ (thus $x_0$ the origin) and the vertices in the sequence $(x_i)_{-b\leq i\leq f}$ are distinct, see Figure~\ref{fig:4,3-SAW}.
\end{definition}

 We shall call $\sQ_n^{b,f}$, the set of all pairs $(\q,P)$, where $\q\in\sQ_n$ and $P$ is a $(b,f)$-SAW on $\q$ (so that the set $\sQ_n^{0,1}$ is automatically identified with the set $\sQ_n$).
 
 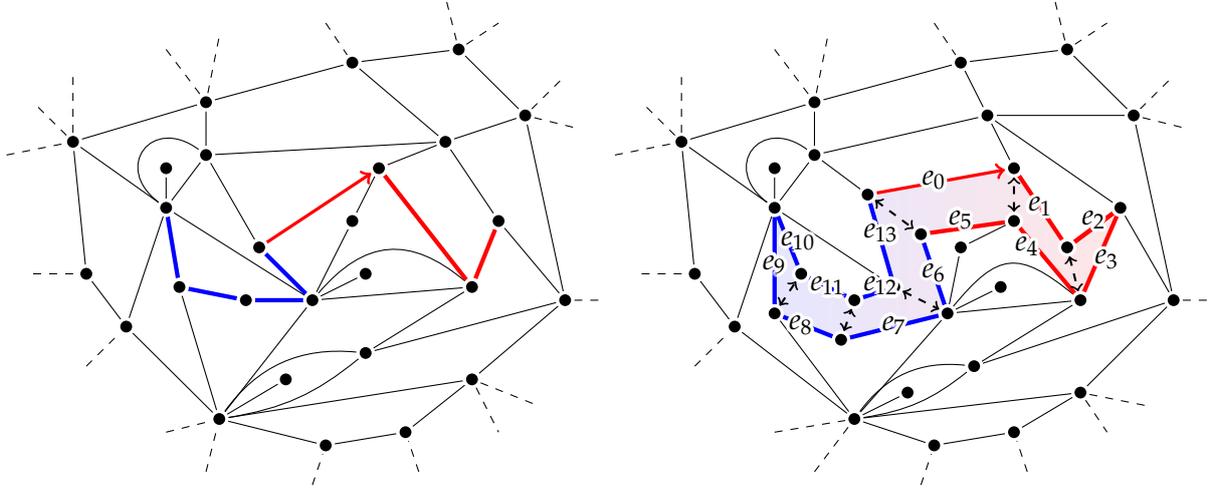
\begin{figure}[h!]\centering
\begin{tikzpicture}[b/.style={blue, ultra thick},f/.style={red, ultra thick}, scale=.70]
	\begin{pgfonlayer}{nodelayer}
		\node [style=real] (0) at (-1, -0) {};
		\node [style=real] (1) at (1.25, 1.5) {};
		\node [style=real] (2) at (0.75, 0.5) {};
		\node [style=real] (3) at (0, -1) {};
		\node [style=real] (4) at (3.5, 0.5) {};
		\node [style=real] (5) at (2.5, 2) {};
		\node [style=real] (6) at (1, -0.5) {};
		\node [style=real] (7) at (3, -0.75) {};
		\node [style=real] (8) at (-2.75, 0.75) {};
		\node [style=real] (9) at (-2, 2.75) {};
		\node [style=real] (10) at (-2, 1.75) {};
		\node [style=real] (11) at (-4.25, -0.5) {};
		\node [style=real] (12) at (-3.5, -1.5) {};
		\node [style=real] (13) at (-2.5, -0.75) {};
		\node [style=real] (14) at (-1.75, -3.25) {};
		\node [style=real] (15) at (-1.25, -1) {};
		\node [style=real] (16) at (1, -2) {};
		\node [style=real] (17) at (3, -2.5) {};
		\node [style=real] (18) at (4.75, -1) {};
		\node [style=real] (19) at (0.25, -3.75) {};
		\node [style=real] (20) at (1.75, -3.5) {};
		\node [style=real] (21) at (-2.75, 1.5) {};
		\node [style=real] (22) at (-4.5, 2) {};
		\node [style=real] (23) at (2.75, 3.75) {};
		\node [style=real] (24) at (0.75, 3.5) {};
		\node [style=real] (25) at (4, 2.5) {};
		\node [style=real] (26) at (-0.5, -2.5) {};
		\node [style=none] (27) at (-2.75, 3.75) {};
		\node [style=none] (28) at (-1.75, 4) {};
		\node [style=none] (29) at (0.25, 4.25) {};
		\node [style=none] (30) at (2.5, 4.75) {};
		\node [style=none] (31) at (3.5, 4.25) {};
		\node [style=none] (32) at (4.75, 3.25) {};
		\node [style=none] (33) at (5, 2.25) {};
		\node [style=none] (34) at (5.5, -1) {};
		\node [style=none] (35) at (3.5, -3.5) {};
		\node [style=none] (36) at (4.25, -3) {};
		\node [style=none] (37) at (2, -4.25) {};
		\node [style=none] (38) at (0, -4.5) {};
		\node [style=none] (39) at (-2.75, -3.5) {};
		\node [style=none] (40) at (-2, -4.25) {};
		\node [style=none] (41) at (-4.25, -2.25) {};
		\node [style=none] (42) at (-5.25, -0.5) {};
		\node [style=none] (43) at (-5.75, 1.75) {};
		\node [style=none] (44) at (-5.25, 2.75) {};
		\node [style=none] (45) at (-4.5, 3.25) {};
	\end{pgfonlayer}
	\begin{pgfonlayer}{edgelayer}
		\draw [style=root] (0) to (1);
		\draw (1) to (2);
		\draw (2) to (3);
		\draw [style=b] (3) to (0);
		\draw [bend left=45, looseness=1.25] (3) to (7);
		\draw [style=f] (7) to (1);
		\draw [style=f] (7) to (4);
		\draw (4) to (5);
		\draw (5) to (1);
		\draw (0) to (10);
		\draw (10) to (5);
		\draw (10) to (9);
		\draw (9) to (24);
		\draw (24) to (5);
		\draw [bend right=90, looseness=2.25] (10) to (8);
		\draw (3) to (8);
		\draw [style=b] (8) to (13);
		\draw [style=b] (13) to (15);
		\draw [style=b] (15) to (3);
		\draw (13) to (14);
		\draw (14) to (3);
		\draw [bend right, looseness=1.00] (16) to (14);
		\draw (16) to (7);
		\draw (16) to (18);
		\draw (18) to (4);
		\draw (5) to (25);
		\draw (25) to (18);
		\draw (18) to (17);
		\draw (17) to (14);
		\draw (14) to (19);
		\draw (19) to (20);
		\draw (20) to (17);
		\draw (14) to (12);
		\draw (12) to (8);
		\draw (8) to (22);
		\draw (9) to (22);
		\draw (22) to (11);
		\draw (11) to (12);
		\draw (24) to (23);
		\draw (23) to (25);
		\draw (3) to (6);
		\draw (21) to (8);
		\draw (3) to (7);
		\draw (8) to (10);
		\draw [bend right=15, looseness=1.00] (14) to (16);
		\draw (26) to (14);
		\draw [style=dashed] (27.center) to (9);
		\draw [style=dashed] (9) to (28.center);
		\draw [style=dashed] (29.center) to (24);
		\draw [style=dashed] (23) to (30.center);
		\draw [style=dashed] (23) to (31.center);
		\draw [style=dashed] (25) to (32.center);
		\draw [style=dashed] (25) to (33.center);
		\draw [style=dashed] (18) to (34.center);
		\draw [style=dashed] (17) to (35.center);
		\draw [style=dashed] (17) to (36.center);
		\draw [style=dashed] (37.center) to (20);
		\draw [style=dashed] (38.center) to (19);
		\draw [style=dashed] (40.center) to (14);
		\draw [style=dashed] (39.center) to (14);
		\draw [style=dashed] (41.center) to (12);
		\draw [style=dashed] (42.center) to (11);
		\draw [style=dashed] (43.center) to (22);
		\draw [style=dashed] (22) to (44.center);
		\draw [style=dashed] (22) to (45.center);
	\end{pgfonlayer}\end{tikzpicture}	\begin{tikzpicture}[b/.style={blue, ultra thick},f/.style={red, ultra thick}, scale=.70, simple/.style={}]
	\begin{pgfonlayer}{nodelayer}
		\node [style=real] (0) at (-1, 1) {};
		\node [style=real] (1) at (1.75, 1.5) {};
		\node [style=real] (2) at (0.75, -0) {};
		\node [style=real] (3) at (-0.5, -0.75) {};
		\node [style=real] (4) at (3.75, 0.75) {};
		\node [style=real] (5) at (1.25, 2.5) {};
		\node [style=real] (6) at (1.5, -0.75) {};
		\node [style=real] (7) at (2.75, -0) {};
		\node [style=real] (8) at (-2.75, 0.75) {};
		\node [style=real] (9) at (-2, 2.75) {};
		\node [style=real] (10) at (-2, 1.75) {};
		\node [style=real] (11) at (-4.25, -0.5) {};
		\node [style=real] (12) at (-3.5, -1.5) {};
		\node [style=real] (13) at (-2.25, -0.5) {};
		\node [style=real] (14) at (-1.25, -3.25) {};
		\node [style=real] (15) at (-1.25, -1) {};
		\node [style=real] (16) at (1, -2.25) {};
		\node [style=real] (17) at (3, -2.75) {};
		\node [style=real] (18) at (4.75, -1) {};
		\node [style=real] (19) at (0.25, -3.75) {};
		\node [style=real] (20) at (1.75, -3.5) {};
		\node [style=real] (21) at (-2.75, 1.5) {};
		\node [style=real] (22) at (-4.5, 2) {};
		\node [style=real] (23) at (2.75, 3.75) {};
		\node [style=real] (24) at (0.75, 3.5) {};
		\node [style=real] (25) at (4, 2.5) {};
		\node [style=real] (26) at (-0.25, -2.75) {};
		\node [style=none] (27) at (-2.75, 3.75) {};
		\node [style=none] (28) at (-1.75, 4) {};
		\node [style=none] (29) at (0.25, 4.25) {};
		\node [style=none] (30) at (2.5, 4.75) {};
		\node [style=none] (31) at (3.5, 4.25) {};
		\node [style=none] (32) at (4.75, 3.25) {};
		\node [style=none] (33) at (5, 2.25) {};
		\node [style=none] (34) at (5.5, -1) {};
		\node [style=none] (35) at (3.5, -3.5) {};
		\node [style=none] (36) at (4.25, -3) {};
		\node [style=none] (37) at (2, -4.25) {};
		\node [style=none] (38) at (0, -4.5) {};
		\node [style=none] (39) at (-2.75, -3.5) {};
		\node [style=none] (40) at (-2, -4.25) {};
		\node [style=none] (41) at (-4.25, -2.25) {};
		\node [style=none] (42) at (-5.25, -0.5) {};
		\node [style=none] (43) at (-5.75, 1.75) {};
		\node [style=none] (44) at (-5.25, 2.75) {};
		\node [style=none] (45) at (-4.5, 3.25) {};
		\node [style=real] (46) at (-2.75, -1.25) {};
		\node [style=real] (47) at (-1.5, -1.75) {};
		\node [style=real] (48) at (0.5, -1.25) {};
		\node [style=real] (49) at (0, 0.25) {};
		\node [style=real] (50) at (1.75, 0.5) {};
		\node [style=real] (51) at (3, -1) {};
		
		\contourlength{1.2pt}
		\node [style=none] (52) at (0.25, 1.25) {\contour{white}{$e_0$}};
		\node [style=none] (53) at (2.25, 0.75) {\contour{white}{$e_1$}};
				\node [style=none] (54) at (3.25, 0.5) {\contour{white}{$e_2$}};
				\node [style=none] (65) at (3.5, -0.25) {\contour{white}{$e_3$}};
		\node [style=none] (55) at (2, -0) {\contour{white}{$e_4$}};
		\node [style=none] (56) at (0.75, 0.5) {\contour{white}{$e_5$}};
		\node [style=none] (57) at (0.25, -0.5) {\contour{white}{$e_6$}};
		\node [style=none] (58) at (-0.5, -1.5) {\contour{white}{$e_7$}};
		\node [style=none] (59) at (-2.25, -1.5) {\contour{white}{$e_8$}};
		\node [style=none] (60) at (-2.75, -0.35) {\contour{white}{$e_9$}};
		\node [style=none] (61) at (-2.3, 0.1) {\contour{white}{$e_{10}$}};
		\node [style=none] (62) at (-1.75, -0.75) {\contour{white}{$e_{11}$}};
		\node [style=none] (63) at (-0.75, -0.75) {\contour{white}{$e_{12}$}};
		\node [style=none] (64) at (-0.75, 0.25) {\contour{white}{$e_{13}$}};
	\end{pgfonlayer}
	\begin{pgfonlayer}{edgelayer}
	\fill[left color=blue!10, right color=red!10]
	 (8.center)--(13.center)--(15.center)--(3.center)--(0.center)--(1.center)--(7.center)--(4.center)--(51.center)--(50.center)--(49.center)--(48.center)--(47.center)--(46.center)--(8.center);
	
		\draw [style=root] (0) to (1);
		\draw [style=b] (3) to (0);
		\draw [style=f] (7) to (1);
		\draw [style=f] (7) to (4);
		\draw (4) to (5);
		\draw (5) to (1);
		\draw (0) to (10);
		\draw (10) to (5);
		\draw (10) to (9);
		\draw (9) to (24);
		\draw (24) to (5);
		\draw [bend right=90, looseness=2.25] (10) to (8);
		\draw (3) to (8);
		\draw [style=b] (8) to (13);
		\draw [style=b] (13) to (15);
		\draw [style=b] (15) to (3);
		\draw [bend right, looseness=1.00] (16) to (14);
		\draw (16) to (18);
		\draw (18) to (4);
		\draw (5) to (25);
		\draw (25) to (18);
		\draw (18) to (17);
		\draw (17) to (14);
		\draw (14) to (19);
		\draw (19) to (20);
		\draw (20) to (17);
		\draw (14) to (12);
		\draw (12) to (8);
		\draw (8) to (22);
		\draw (9) to (22);
		\draw (22) to (11);
		\draw (11) to (12);
		\draw (24) to (23);
		\draw (23) to (25);
		\draw (21) to (8);
		\draw (8) to (10);
		\draw [bend right=15, looseness=1.00] (14) to (16);
		\draw (26) to (14);
		\draw [style=dashed] (27.center) to (9);
		\draw [style=dashed] (9) to (28.center);
		\draw [style=dashed] (29.center) to (24);
		\draw [style=dashed] (23) to (30.center);
		\draw [style=dashed] (23) to (31.center);
		\draw [style=dashed] (25) to (32.center);
		\draw [style=dashed] (25) to (33.center);
		\draw [style=dashed] (18) to (34.center);
		\draw [style=dashed] (17) to (35.center);
		\draw [style=dashed] (17) to (36.center);
		\draw [style=dashed] (37.center) to (20);
		\draw [style=dashed] (38.center) to (19);
		\draw [style=dashed] (40.center) to (14);
		\draw [style=dashed] (39.center) to (14);
		\draw [style=dashed] (41.center) to (12);
		\draw [style=dashed] (42.center) to (11);
		\draw [style=dashed] (43.center) to (22);
		\draw [style=dashed] (22) to (44.center);
		\draw [style=dashed] (22) to (45.center);
		\draw (46) to (14);
		\draw (48) to (14);
		\draw (16) to (51);
		\draw [style=b] (8) to (46);
		\draw [style=b] (46) to (47);
		\draw [style=b] (47) to (48);
		\draw [style=b] (48) to (49);
		\draw [style=f] (49) to (50);
		\draw [style=f] (7) to (4);
		\draw [style=simple, bend left=45, looseness=1.50] (48) to (51);
		\draw [style=simple] (48) to (2);
		\draw [style=simple] (2) to (50);
		\draw [style=simple] (48) to (6);
		\draw [style=simple] (48) to (51);
		\draw [style=f] (50) to (51);
		\draw [style=f] (51) to (4);
			
		\draw [style=dashed,<->, thick] (46) to (13);
		\draw [style=dashed,<->, thick] (47) to (15);
		\draw [style=dashed,<->, thick] (3) to (48);
		\draw [style=dashed,<->, thick] (0) to (49);
		\draw [style=dashed,<->, thick] (50) to (1);
		\draw [style=dashed,<->, thick] (7) to (51);
	\end{pgfonlayer}
\end{tikzpicture}

\caption{\label{fig:4,3-SAW}Left: A $(4,3)$-SAW in a rooted quadrangulation. Right: The function $\Z{4}{3}$ is applied to a quadrangulation with a boundary of length 14 (whose outerface is drawn as finite to aid visualisation) to obtain the quadrangulation with a distinguished $(4,3)$-SAW depicted on the Left. \label{fig:zip43} }
\end{figure}

Fix $p \geq 1$. There is an obvious bijective correspondence between, on the one hand, the set $\sQ^{0,p}_n$ of quadrangulations of size $n$ with a $(0,p)$-SAW and, on the other hand, the set $\sQ_{n,p}$ of quadrangulations with a simple boundary of perimeter $2p$ and size $n$; such a correspondence is an immediate generalisation of the one between $\sQ_{n,1}$ and $\sQ_n$ mentioned in Section~\ref{sec:quad with a boundary} (Figure~\ref{fig:Qn1 to Qn}): simply let the self-avoiding walk act as a ``zipper'', eliminating the external face by pairwise identifying its edges.

 In fact, we may generalize this construction further: for $b \geq 0, f \geq 1$ such that $b+f=p$ one can build a bijection $\Z{b}{f}$ between the set of all finite quadrangulations with a simple boundary of length $2p$ and the set of all finite quadrangulations of the sphere endowed with a $(b,f)$-SAW. Such a mapping works as follows: write $\vec{e}_0,\ldots , \vec{e}_{2p-1}$ for the $2p$ edges of the boundary of a quadrangulation $ \mathfrak{q}\in \sQ_{n,p}$, taken in clockwise order and in such a way that $\vec{e}_0$ is the root edge, each edge oriented clockwise with respect to the outerface. We set $ \Z{b}{f}( \mathfrak{q})$ to be the quadrangulation of the sphere obtained by identifying $\vec{e}_i$ with $-\vec{e}_{2f-1-i}$ (where indices are to be read modulo $2p$ and the minus sign represents a change in orientation), endowed with the distinguished self-avoiding path of length $p$ that is the image of the original cycle $\vec{e}_0,\ldots \vec{e}_{2p-1}$ and rooted at the image of $\vec{e}_0$, see Figure \ref{fig:zip43}.

 Since the above mappings are bijections, if $\Q_{n,p}$ is uniformly distributed over $ \sQ_{n,p}$ then for any fixed quadrangulation of the sphere $\q$ with $n$ faces we have 
 \begin{eqnarray} \label{eq:density1} \mathbb{P}\big( \Z{b}{f}(\Q_{n,p}) = (\mathfrak{q}, \mathbf{w})  \mbox{ for some $(b,f)$-SAW } \mathbf{w}\big) =  \frac{\# \SAW{b}{f}( \mathfrak{q})}{ \# \sQ_{n,b+f}},  \end{eqnarray}
 where $ \# \SAW{b}{f}( \mathfrak{q})$ is the number of $(b,f)$-SAWs on $ \q$. In other words, the underlying quadrangulation of $\Z{b}{f}(\Q_{n,p})$ is \emph{not} uniformly distributed, but biased by its number of $(b,f)$-SAWs.

 \subsection{Annealed infinite self-avoiding walks on the UIPQ}

   One can extend the definition of the local distance to maps endowed with a distinguished SAW as a variant of \eqref{eq:deflocal}, by providing an appropriate notion of a ball: if $( \mathfrak{q}, ( \vec{e}_i)_{-b-1<  i < f })$ is a quadrangulation with a distinguished SAW of type $(b,f)$, for each $r \geq 2$ we set $$\Ball_{r}( \mathfrak{q}, ( \vec{e}_i)_{-b-1 <  i < f }) = \Big( \Ball_{r}( \mathfrak{q}),( \vec{e}_i)_{-(b\wedge (r-1))-1 <  i <  (f\wedge (r-1)) }\Big).$$

 For any fixed $b,f$, it is clear that the zipper map $\Z{b}{f}$ is continuous for the local topology, hence one may deduce from \eqref{def:UIPQ} that for any $b\geq 0$, $f \geq 1$ such that $b+f=p$ one has $ \Z{b}{f}(\Q_{n,p}) \to \Z{b}{f}(\Q_{\infty,p})$ in distribution as $n \to \infty$. We are now interested in letting $b$ and $f$ tend to $\infty$.
   \begin{proposition}[Annealed UIPQs with SAW] \label{prop:annealedSAW} We have the following convergences in distribution for the local topology on quadrangulations endowed with a self-avoiding walk:
 \begin{eqnarray} \label{eq:defUIPQSAW1} \Z{0}{p}(\Q_{\infty,p}) &\xrightarrow[p\to\infty]{(d)} &(\Q_{\infty}^\rightarrow, (\mathsf{P}^\rightarrow_i)_{i \geq 0}), \\
 \label{eq:defUIPQSAW2} \Z{p}{p'}(\Q_{\infty,p+p'}) &\xrightarrow[p,p'\to\infty]{(d)}& (\Q_{\infty}^\leftrightarrow, (\mathsf{P}^\leftrightarrow_i)_{i \in\mathbb{Z}}),  \end{eqnarray}
 where $(\Q_{\infty}^\rightarrow, (\mathsf{P}^\rightarrow_i)_{i \geq 0})$ can be obtained as $ \Z{0}{\infty}(\UIHPQ)$ by ``zipping up'' the boundary of a UIHPQ, whereas $(\Q_{\infty}^\leftrightarrow, (\mathsf{P}^\leftrightarrow_i)_{i \in \mathbb{Z}})$ is the result of the glueing of two independent UIHPQs along their boundaries (so that their root edges are identified with opposite orientations).
\end{proposition}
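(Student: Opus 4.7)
Both statements follow from the essentially local nature of the zipper operations $\Z{b}{f}$: for any fixed radius $r$, the ball $\Ball_r$ of the zipped map depends on the source map and its boundary identifications only through a bounded neighborhood of the root. The plan is first to prove \eqref{eq:defUIPQSAW1} as a continuity statement for $\Z{0}{\cdot}$ along the known convergence $\Q_{\infty,p}\to\UIHPQ$, and then to handle \eqref{eq:defUIPQSAW2} via an additional two-point decoupling argument on $\Q_{\infty,p+p'}$.

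For \eqref{eq:defUIPQSAW1}, fix $r\geq 2$ and observe that there exists an a.s.~finite random integer $R=R(r)$ such that the ball $\Ball_r(\Z{0}{p}(\Q_{\infty,p}))$ is determined by $\Ball_R(\Q_{\infty,p})$ together with the zipper identifications $\vec{e}_i \leftrightarrow -\vec{e}_{2p-1-i}$ for $0\leq i\leq R$. Under the natural embedding of the boundary of $\Q_{\infty,p}$ (a cycle of length $2p$) into the boundary of $\UIHPQ$ (a $\mathbb{Z}$-indexed bi-infinite path with $\vec{e}_0$ as root) that sends $\vec{e}_{i}\mapsto\vec{e}_{i}$ for $0\leq i\leq p-1$ and $\vec{e}_{2p-1-j}\mapsto\vec{e}_{-1-j}$ for $0\leq j\leq p-1$, these identifications coincide exactly, provided $p>R+1$, with those $\vec{e}_i\leftrightarrow -\vec{e}_{-1-i}$ performed by $\Z{0}{\infty}$ on $\UIHPQ$. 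Combining with $\Ball_R(\Q_{\infty,p})\to\Ball_R(\UIHPQ)$ from~\eqref{def:UIPQ} yields the convergence.

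For \eqref{eq:defUIPQSAW2}, the zipper $\Z{p}{p'}$ pairs boundary edges of $\Q_{\infty,p+p'}$ near index $0$ with boundary edges near index $2p'$; when $p,p'$ are large, these two positions are far apart along the boundary. Consequently the ball around the root in $\Z{p}{p'}(\Q_{\infty,p+p'})$ is determined by two disjoint neighborhoods, centered respectively near indices $0$ and $2p'$ of the boundary. The main step---and the expected obstacle---is a two-point decoupling: as $p,p'\to\infty$, the joint law of these two boundary neighborhoods in $\Q_{\infty,p+p'}$ converges to the product of two independent $\UIHPQ$ laws, each rerooted at its corresponding boundary edge. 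I would prove this via a peeling exploration of $\Q_{\infty,p+p'}$ starting from the root: once a finite neighborhood of the origin has been revealed, the spatial Markov property of the model---combined with the boundary-rerooting invariance of $\UIHPQ$ and the convergence $\Q_{\infty,q}\to\UIHPQ$ applied to the unexplored region---produces, in the limit, an independent copy of $\UIHPQ$ rooted at a boundary edge near position $2p'$. Once the decoupling is established, the zipper identifications realize exactly the gluing of these two independent $\UIHPQ$s along their boundaries, which is by definition $(\Q_\infty^\leftrightarrow,\mathsf{P}^\leftrightarrow)$, yielding \eqref{eq:defUIPQSAW2}.
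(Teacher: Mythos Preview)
Your proposal is correct and follows essentially the same route as the paper's proof. For \eqref{eq:defUIPQSAW1} the paper packages your locality observation as a deterministic continuity lemma for $\Z{0}{\cdot}$ and applies it after a Skorokhod coupling of $\Q_{\infty,p}\to\UIHPQ$; for \eqref{eq:defUIPQSAW2} it isolates your two-point decoupling as a separate lemma asserting $(\Q_{\infty,p},\Q_{\infty,p}^{(k)})\to(\UIHPQ,\UIHPQ')$ as $k,2p-k\to\infty$, proved exactly via the hull spatial Markov property and boundary-rerooting invariance you outline.
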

   
\proof Consider the first convergence \eqref{eq:defUIPQSAW1}; we claim that it is a consequence of the second convergence in \eqref{def:UIPQ}. To see this, notice first that we can extend the definition of the zipper map and consider $ \Z{b}{f}$ when one out of $b,f$ is finite, the other infinite;  such a correspondence maps an infinite quadrangulation with an infinite boundary to an infinite quadrangulation endowed with a $(b,f)$-SAW, as depicted in Figure~\ref{fig:(2,infinity)-SAW}. 

\begin{figure}[!h]
 \begin{center}
 \includegraphics[width=13cm]{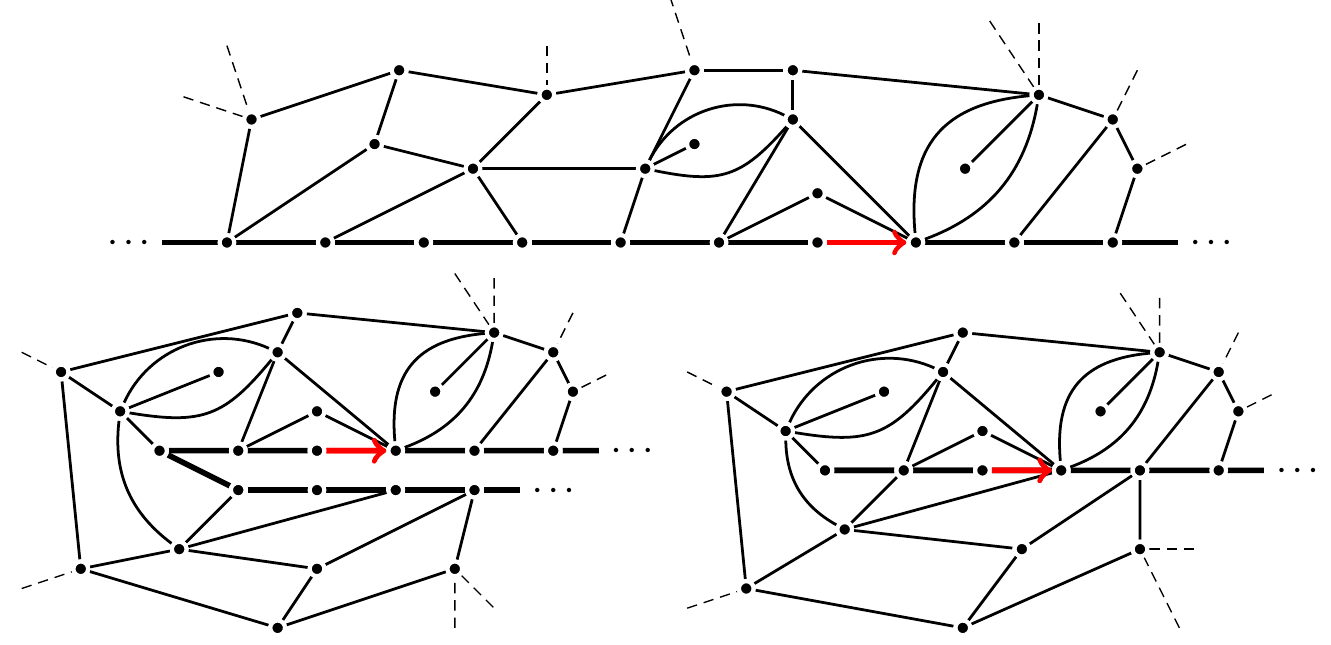}
 \caption{\label{fig:(2,infinity)-SAW}The map $\Z{2}{\infty}$ applied to a quadrangulation with an infinite boundary.}
 \end{center}
 \end{figure}
\begin{lemma} Let $ \mathfrak{q}_p \to  \mathfrak{q}_\infty$ be a sequence of quadrangulations with a boundary, $\q_p$ having perimeter $2p$, which converges for $ \mathrm{d_{loc}}$ towards an infinite quadrangulation with an infinite boundary; then we have 
 \begin{eqnarray*} \Z{0}{p}( \mathfrak{q}_p) &\xrightarrow[p\to\infty]{ ( \mathrm{d_{loc}})}& \Z{0}{\infty}( \mathfrak{q}_\infty).  \end{eqnarray*}
\end{lemma}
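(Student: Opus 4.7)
The goal is to show, for each fixed $r \geq 2$, that the balls $\Ball_r(\Z{0}{p}(\mathfrak{q}_p))$ and $\Ball_r(\Z{0}{\infty}(\mathfrak{q}_\infty))$ coincide as maps with a distinguished SAW once $p$ is large enough. I would first note that $\Z{0}{\infty}(\mathfrak{q}_\infty)$ is locally finite, since each of its vertices is the identification class of at most two vertices of the locally finite $\mathfrak{q}_\infty$. Hence $\Ball_r(\Z{0}{\infty}(\mathfrak{q}_\infty))$ is a finite submap and its preimage under the canonical projection $\mathfrak{q}_\infty \to \Z{0}{\infty}(\mathfrak{q}_\infty)$ is a finite subset of $\mathfrak{q}_\infty$, contained in some $\Ball_R(\mathfrak{q}_\infty)$; writing $(v_i)_{i \in \mathbb{Z}}$ for the boundary vertices of $\mathfrak{q}_\infty$, only finitely many lie in this ball, say $v_{-L}, \ldots, v_L$.

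The crucial algebraic observation is that, using cyclic notation $u_i$ modulo $2p$ for the boundary vertices of $\mathfrak{q}_p$, the identifications $u_i \sim u_{-i}$ performed by $\Z{0}{p}$ coincide with the identifications $v_i \sim v_{-i}$ of $\Z{0}{\infty}$ for all $|i| < p$; only the fold at $u_p$ (which is self-identified) is specific to the finite zipper. By the local convergence $\mathfrak{q}_p \to \mathfrak{q}_\infty$, for $p$ large enough one has $\Ball_R(\mathfrak{q}_p) = \Ball_R(\mathfrak{q}_\infty)$, the boundary vertices in this common ball are exactly $u_{-L}, \ldots, u_L$, and the restrictions of the identifications of the two zippers to this region coincide. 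This readily gives the inclusion $\Ball_r(\Z{0}{\infty}(\mathfrak{q}_\infty)) \subseteq \Ball_r(\Z{0}{p}(\mathfrak{q}_p))$ and the matching of the first $\min(r,p) - 1$ edges of the SAW.

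The reverse inclusion is the main obstacle: one must rule out that the extra wrap-around structure of $\Z{0}{p}$ creates new vertices, edges, or faces at distance $\leq r$ from the origin. To handle this I would lift any path of length $\leq r$ in $\Z{0}{p}(\mathfrak{q}_p)$ starting at the origin to a walk in $\mathfrak{q}_p$ consisting of at most $r$ edge-steps interspersed with teleports between identified boundary vertices. After enlarging $R$ if necessary---in particular so that the boundary vertices inside $\Ball_R(\mathfrak{q}_\infty)$ are symmetric around $0$ and any single edge move from such a boundary vertex lands in a region still governed by matching identifications---and taking $p$ large enough that every boundary vertex $u_j$ of $\mathfrak{q}_p$ with $L < |j|_{\mathrm{cyc}} \leq p$ lies far from the origin in $\mathfrak{q}_p$ (which is possible since graph distances in $\mathfrak{q}_\infty$ from the origin to $v_j$ go to infinity as $|j| \to \infty$, by local finiteness), an inductive argument along the lifted walk shows that only identifications shared by $\Z{0}{p}$ and $\Z{0}{\infty}$ can ever be used. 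Consequently the lifted walk projects identically to a walk of length $\leq r$ in $\Z{0}{\infty}(\mathfrak{q}_\infty)$, giving the desired inclusion. The hardest step is precisely this inductive control: the walker may leave $\Ball_R(\mathfrak{q}_p)$ via a single edge move, and one must verify that it cannot be forced in this way into a wrap-around teleport---a fact that ultimately rests on only finitely many boundary vertices being reachable by walks of bounded length in the zipper.
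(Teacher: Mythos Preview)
Your approach is essentially the same as the paper's: find, for each fixed $r$, a radius $r'$ (depending on $\mathfrak{q}_\infty$) such that $\Ball_{r'}(\mathfrak{q}_p)=\Ball_{r'}(\mathfrak{q}_\infty)$ forces $\Ball_r(\Z{0}{p}(\mathfrak{q}_p))=\Ball_r(\Z{0}{\infty}(\mathfrak{q}_\infty))$. The paper dispatches this in two lines (``it is easy to see that one can find $r'\geq r$ \ldots''), whereas you spell out the mechanism---local finiteness of the zipped limit, coincidence of the identifications $u_i\sim u_{-i}$ with $v_i\sim v_{-i}$ for $|i|<p$, and the lifting-of-paths argument to exclude wrap-around shortcuts. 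Your extra care is not wasted: the reverse inclusion you flag as ``the main obstacle'' is exactly the point the paper sweeps under the rug, and your justification (that $d_{\mathfrak{q}_\infty}(v_0,v_j)\to\infty$ as $|j|\to\infty$, so for large $p$ the extraneous fold at $u_p$ lies outside any fixed ball) is the right one.
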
 
\proof[Proof of the lemma] Fix $r \geq 1$. Although $\Ball_{r}( \Z{0}{\infty}( \mathfrak{q}_\infty))$  may not be a measurable function of $ \Ball_{r}(\mathfrak{q}_\infty)$ (because graph distances may be decreased by applying $\Z{0}{\infty}$), it is easy to see that one can find $r' \geq r$ (depending on $ \mathfrak{q}_\infty$) such that if $\Ball_{r'}( \mathfrak{q}_\infty) =  \Ball_{r'}( \mathfrak{q}_p)$ then we have $ \Ball_{r}(\Z{0}{p}( \mathfrak{q}_p)) = \Ball_{r}( \Z{0}{\infty}( \mathfrak{q}_\infty))$. This proves the lemma.
\endproof
Coming back to the proof of the theorem, by \eqref{def:UIPQ} and the  Skorokhod embedding theorem one can suppose that $\Q_{\infty,p} \to \UIHPQ$ almost surely. It thus follows from the above lemma that $ \Z{0}{p}( \Q_{\infty,p}) \to \Z{0}{\infty}( \UIHPQ)$ almost surely as $p\to\infty$. This proves the desired convergence in distribution.\\

We now move on to the second convergence \eqref{eq:defUIPQSAW2}, which is not this time a simple consequence of \eqref{def:UIPQ}, as one cannot define $ \Z{\infty}{\infty}( \UIHPQ)$. The idea is that the two parts of $\Q_{\infty,p+p'}$ which are facing together near the root edge in $ \Z{p}{p'}(\Q_{\infty,p+p'})$ are distant from each other when $p,p' \to \infty$ and become asymptotically independent. Here is the proper lemma from which the second convergence \eqref{eq:defUIPQSAW2} immediately follows:

\begin{lemma}  \label{lem:indept}For $k \in \{0, \ldots, 2p\}$ denote by $\Q_{\infty,p}^{(k)}$ the random infinite quadrangulation with a boundary of perimeter $2p$ obtained by re-rooting $\Q_{\infty,p}$ at the $k$-th edge along the boundary of its external face. Then we have 
$$ (\Q_{\infty,p}, \Q_{\infty,p}^{(k)}) \xrightarrow[\begin{subarray}{c} k\to\infty \\
(2p-k) \to \infty \end{subarray}]{} (\UIHPQ, \UIHPQ'),$$ where $\UIHPQ$ and $\UIHPQ'$ are two independent copies of the UIHPQ.
\end{lemma}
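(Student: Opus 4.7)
The goal is to prove joint convergence of $(\Q_{\infty,p},\Q_{\infty,p}^{(k)})$ to a pair of independent copies $(\UIHPQ,\UIHPQ')$. The plan splits naturally into (i) marginal convergence plus tightness, and (ii) asymptotic independence via a peeling argument that exploits the spatial Markov property of $\Q_{\infty,p}$.

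\textbf{Marginals and tightness.} The first marginal $\Q_{\infty,p}\to\UIHPQ$ is exactly \eqref{def:UIPQ}. For the second, observe that $\Q_{n,p}$ is uniform on $\sQ_{n,p}$, so its law is invariant under the boundary-rerooting involutions $\q\mapsto\q^{(k)}$ of $\sQ_{n,p}$; hence $\Q_{n,p}^{(k)}\eqdist\Q_{n,p}$. Since re-rooting at a fixed boundary position is continuous for $\dloc$ (the new root edge lies in any ball of graph-radius $\geq k$ around the origin), this identity passes to the local limit $n\to\infty$, giving $\Q_{\infty,p}^{(k)}\eqdist\Q_{\infty,p}$, and then $p\to\infty$ yields marginal convergence to $\UIHPQ$. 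In particular the pair $(\Q_{\infty,p},\Q_{\infty,p}^{(k)})$ is tight; let $(H,H')$ be any subsequential limit, with both marginals $\UIHPQ$-distributed.

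\textbf{Independence via peeling.} It suffices to show that $\Ball_r(H)$ and $\Ball_r(H')$ are independent for every fixed $r\geq 1$. We run a (simple-boundary) peeling exploration of $\Q_{\infty,p}$ from the root edge with a stopping rule $\tau_r$ designed so that the explored region $\mathcal{E}_{\tau_r}$ contains $\Ball_r(\Q_{\infty,p})$; for instance, peel any face incident to a vertex at distance $\leq r$ from the origin. By the spatial Markov property, conditionally on $\mathcal{E}_{\tau_r}$ the unexplored complement $\mathcal{U}$ has the law of $\Q_{\infty,P}$ for a random perimeter $2P=2p-S+I$, where $S$ is the number of original boundary edges swallowed by $\mathcal{E}_{\tau_r}$ and $I$ is the length of the interface. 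When $k$ and $2p-k$ both tend to infinity fast enough, with probability tending to $1$ the $k$-th boundary edge of $\Q_{\infty,p}$ is not swallowed by $\mathcal{E}_{\tau_r}$ and corresponds to some position $k'$ on $\partial\mathcal{U}$ with $\min(k',2P-k')\to\infty$; moreover it lies at graph distance $>r$ from $\mathcal{E}_{\tau_r}$, so that $\Ball_r(\Q_{\infty,p}^{(k)})=\Ball_r(\mathcal{U}^{(k')})$. Applying the same marginal invariance within $\mathcal{U}$ gives $\mathcal{U}^{(k')}\eqdist\mathcal{U}=\Q_{\infty,P}$ conditionally on $\mathcal{E}_{\tau_r}$; combining this with $P\to\infty$ and \eqref{def:UIPQ} yields the conditional convergence $\Ball_r(\mathcal{U}^{(k')})\to\Ball_r(\UIHPQ')$ for a $\UIHPQ'$ that is independent of $\mathcal{E}_{\tau_r}$. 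Since $\Ball_r(\Q_{\infty,p})$ is measurable with respect to $\mathcal{E}_{\tau_r}$, this produces the desired asymptotic independence.

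\textbf{Main obstacle.} The delicate point is the tightness of $(\mathcal{E}_{\tau_r},S,I)$ as $p\to\infty$, which is needed both for $P\to\infty$ and for the fact that the $k$-th boundary edge is not swallowed. The cleanest route is to couple the first peeling steps of $\Q_{\infty,p}$ with those of $\UIHPQ$ on an event of probability tending to $1$ as $p\to\infty$: since $\tau_r$ is almost surely finite on $\UIHPQ$ (peeling reaches radius $r$ in finitely many steps), this provides the required tightness. This coupling, together with the simple-boundary spatial-Markov machinery of \cite{Ang05,CMboundary,CCuihpq}, is the technical core on which the argument rests.
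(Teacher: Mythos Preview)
Your proposal is correct and follows essentially the same route as the paper's proof: both argue marginal convergence via rerooting invariance, then establish asymptotic independence by exploring the hull of the $r$-ball around the root, invoking the spatial Markov property to identify the complement as a UIPQ of a $2p'$-gon, and using that the two $r$-neighbourhoods are disjoint with high probability so that the second one can be read off inside the complement.

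Two small remarks on presentation. First, the paper dispenses with your ``main obstacle'' more directly: rather than coupling the peeling of $\Q_{\infty,p}$ with that of $\UIHPQ$, it simply invokes the convergence $\Q_{\infty,p}\to\UIHPQ$ in $\dloc$, which already gives tightness of $\Ball_r^\bullet(\Q_{\infty,p})$ and hence of the number of swallowed boundary edges (this is what the paper calls ``local finiteness''). Second, your assertion that the $k$-th boundary edge ``lies at graph distance $>r$ from $\mathcal{E}_{\tau_r}$'' does not follow from $k,2p-k\to\infty$ and tightness of $\mathcal{E}_{\tau_r}$ alone; you need the $r$-neighbourhood of the second root to be tight as well. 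The paper handles this symmetrically: $\Ball_r^\bullet(\Q_{\infty,p}^{(k)})$ has the same law as $\Ball_r^\bullet(\Q_{\infty,p})$ by rerooting invariance, hence is also tight, so the two hulls are disjoint with high probability. Your argument is easily completed the same way.
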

\proof[Proof of the lemma] Notice first that by invariance under re-rooting $\Q_{\infty,p}$ and $ \Q_{\infty,p}^{(k)}$ have the same law and both converge in law towards $\UIHPQ$ by \eqref{def:UIPQ}. The only nontrivial point is the asymptotic independence. Let $r \geq 1$; we will show that the $r$-neighborhoods around the root edge and the $k$-th edge along the boundary of $\Q_{\infty,p}$ become independent as $k \to \infty$ and $2p-k \to \infty$. We write $  \Ball_r^\bullet(\Q_{\infty,p})$ for the hull of the ball of radius $r$ inside $\Q_{\infty,p}$: this is the submap obtained by filling in all the finite holes that $ \Ball_{r}(\Q_{\infty,p})$ together with the boundary of $\Q_{\infty,p}$ may create (recall that $\Q_{\infty,p}$ only has one end), see Figure~\ref{fig:ball}. Hence $ \Ball_r^\bullet(\Q_{\infty,p})$ is a finite quadrangulation with a simple boundary made up of two joined paths: one belonging to the boundary of $\Q_{\infty,p}$ (the outer boundary) and the other (the inner boundary) on which one needs to glue an infinite quadrangulation with a boundary in order to recover $\Q_{\infty,p}$ (see \cite[Section 4.1]{CLGmodif} for a similar definition in the context of triangulations). 

\begin{figure}[!h]
\centering
\begin{tikzpicture}[b/.style={line width=3pt, white}]
	\begin{pgfonlayer}{nodelayer}
		\node [style=real, fill=red, label=below:0] (0) at (0, -0) {};
		\node [style=real, fill=red, label=below:1] (1) at (1, -0) {};
		\node [style=real, fill=red, label=below right:1] (12) at (1, 1.75) {};
		\node [style=real, fill=red, label=above:1] (13) at (-1.5, 1.75) {};
		\node [style=real, fill=red, label=below:1] (6) at (-1, -0) {};
		\node [style=real, fill=blue!60, label=below:2] (2) at (2, -0) {};
		\node [style=real, fill=blue!60, label=below:2] (4) at (4, -0) {};
		\node [style=real, fill=blue!60, label=above:2] (14) at (0, 3.25) {};
		\node [style=real, fill=blue!60, label=below:2] (7) at (-2, -0) {};
		\node [style=real, fill=blue!60, label=below:2] (9) at (-4, -0) {};
		\node [style=real, fill=blue!60, label=below:2] (15) at (0, 1.25) {};
		\node [style=real, fill=blue!80, label=below:3] (3) at (3, -0) {};
		\node [style=real, fill=blue!80, label=below:3] (5) at (5, -0) {};
		\node [style=real, fill=blue!80, label=below:3] (8) at (-3, -0) {};
		
		\node [style=real, fill=blue!80, label=below:3] (10) at (-5, -0) {};
		\node [style=real] (11) at (2.5, 0.25) {};
		
		\node [style=real, fill=blue!80, label=left:3] (16) at (-0.5, 2) {};
		\node [style=real, fill=blue!80, label=right:3] (17) at (0.5, 2.5) {};
		\node [style=real, fill=blue!80, label=below:3] (18) at (-2.75, 2) {};
		\node [style=real, fill=blue, label=above:\contour{white}{4}] (11) at (2.5, 0.25) {};

		\node (19) at (1, 3.75) {};
		\node (20) at (5.5, 0.75) {};
		\node (21) at (-6, 0.75) {};
	\end{pgfonlayer}
	\begin{pgfonlayer}{edgelayer}
	\fill[red!20] (9.center)--(5.center) [bend right=45, looseness=1.05] to (14.center)--(18.center)--(9.center);
	\fill[gray!40!red] (14.center)--(16.center)--(15.center)--(17.center)--cycle;
	\fill[gray!40!red][bend left=90, looseness=2.2] (2.center) to (3.center)--(2.center);
		\draw[b] [bend left=90, looseness=2.00] (2) to (3);
		\draw[b] (3) to (11);
		\draw[b] (2) to (3);
		\draw[b] [bend left=75, looseness=1.25] (1) to (4);
		\draw[b] (1) to (2);
		\draw[b] (3) to (4);
		\draw[b] (13) to (0);
		\draw[b] (0) to (12);
		\draw[b] [bend left=45, looseness=1.00] (12) to (4);
		\draw[b] (0) to (1);
		\draw[b] (13) to (15);
		\draw[b] (15) to (12);
		\draw[b] (13) to (14);
		\draw[b] [bend left=60, looseness=1.50] (14) to (12);
		\draw[b] (14) to (17);
		\draw[b] (17) to (15);
		\draw[b] (16) to (15);
		\draw[b] (16) to (14);
		\draw[b] (13) to (7);
		\draw[b] (7) to (6);
		\draw [b](6) to (0);
		\draw[b] (9) to (8);
		\draw[b] (8) to (7);
		\draw[b] (9) to (13);
		\draw[b] (14) to (18);
		\draw[b] (18) to (9);
		\draw[b] (10) to (9);
		\draw[b] [bend left=45, looseness=1.25] (10) to (14);
		\draw[b] (4) to (5);
		\draw[b] [bend left=45, looseness=1.00] (14) to (5);
		
		\draw[dashed] (-6,0)--(6,0);
		\draw[thick] (-5,0)--(5,0);
		\draw[ultra thick] (-4,0)--(5,0);
		\draw [bend left=90, looseness=2.00] (2) to (3);
		\draw (3) to (11);
		\draw (2) to (3);
		\draw [bend left=75, looseness=1.25] (1) to (4);
		\draw (1) to (2);
		\draw (3) to (4);
		\draw (13) to (0);
		\draw (0) to (12);
		\draw [bend left=45, looseness=1.00] (12) to (4);
		\draw[root] (0) to (1);
		\draw (13) to (15);
		\draw (15) to (12);
		\draw (13) to (14);
		\draw [bend left=60, looseness=1.50] (14) to (12);
		\draw (14) to (17);
		\draw (17) to (15);
		\draw (16) to (15);
		\draw (16) to (14);
		\draw (13) to (7);
		\draw (7) to (6);
		\draw (6) to (0);
		\draw (9) to (8);
		\draw (8) to (7);
		\draw (9) to (13);
		\draw[ultra thick, red] (14) to (18);
		\draw[ultra thick, red] (18) to (9);
		\draw (10) to (9);
		\draw [bend left=45, looseness=1.25] (10) to (14);
		\draw (4) to (5);
		\draw [ultra thick, red, bend left=45, looseness=1.00] (14) to (5);
	
			\draw[dashed] (14) to (19.center);
		\draw[dashed] (20.center) to (5);
		\draw[dashed] (10) to (21.center);
	\end{pgfonlayer}
\end{tikzpicture}
 \caption{\label{fig:ball}The hull of the ball of radius $1$ inside an infinite quadrangulation of the half-plane. The outer boundary is the thick black line and the inner boundary is in red.}

 \end{figure}
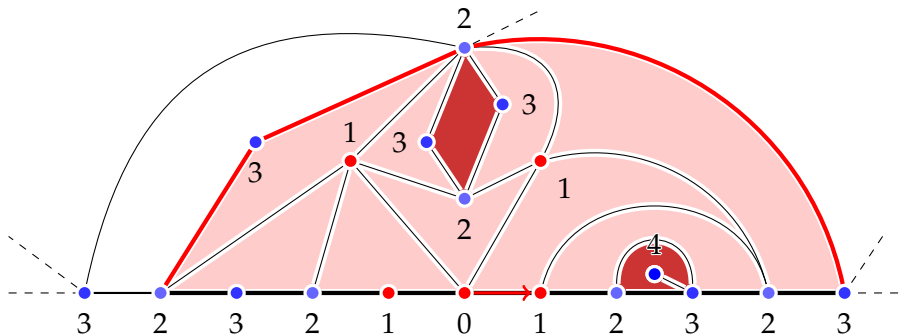

The spatial Markov property of the UIPQ of the $2p$-gon (see \cite{Ang03, CMboundary,CLGpeeling}) shows that conditionally on $ \{\Ball_r^\bullet( \Q_{\infty,p}) = \mathfrak{a}\}$ the law of the remaining part of $\Q_{\infty,p}$ is that of of a UIPQ of the $(2p + \ell_{ \mathrm{in}} - \ell_{ \mathrm{ out}})$-gon where $\ell_{ \mathrm{in}}$ and $\ell_{ \mathrm{out}}$ are respectively the length of the inner and outer boundary of $ \mathfrak{a}$. Now if $k \to \infty$ and $2p-k \to \infty$, by local finiteness, it is very unlikely that $ \Ball_{r}^{\bullet}(\Q_{\infty,p})$ intersects $ \Ball_{r}^{\bullet}( \Q_{\infty,p}^{{(k)}})$ and, conditionally on the event that they are disjoint, by invariance under re-rooting the law of $ \Ball_{r}^{\bullet}( \Q_{\infty,p}^{{(k)}})$ is the same as that of $ \Ball_{r}^{\bullet}( \Q_{\infty,p+ \ell_{ \mathrm{in}}-  \ell_{ \mathrm{out}}})$, which converges to the law of $ \Ball_{r}^{\bullet}( \UIHPQ)$ by \eqref{def:UIPQ}. In particular this shows that $ \Ball_{r}^{\bullet}( \Q_{\infty,p}^{{(k)}})$ is asymptotically independent of $\{\Ball_r^\bullet( \Q_{\infty,p}) = \mathfrak{a}\}$ as $k \to \infty$ with $2p-k \to \infty$. This yields our claim. \endproof  

\subsection{Annealed and quenched connective constants}
\label{sec:connectiveconstant}

In a lattice, the connective constant is generally defined as the exponential growth rate (when it exists) of the number of $(0,n)$-SAWs. In our context, given an infinite quadrangulation $\mathfrak{q}$, we call the connective constant of $ \mathfrak{q}$ the quantity
$$\mu( \mathfrak{q}) = \limsup_{n \to \infty} \big(\# \SAW{0}{n}( \mathfrak{q}) \big)^{1/n}.$$
\begin{proposition}[Existence of the quenched connective constant] The connective constant $\mu(\Q_{\infty})$ of the UIPQ is almost surely constant.
\end{proposition}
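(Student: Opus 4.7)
I would show that $\mu(\Q_\infty)$ is a tail random variable of the UIPQ and then invoke the tail $0$--$1$ law that follows from the spatial Markov property of the UIPQ.

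\textbf{Step 1: insensitivity to finite modifications.} First I would verify that the quantity $\mu(\q)$ depends only on the germ of $\q$ at infinity: if $\q$ and $\q'$ are two rooted infinite planar quadrangulations such that, for some $r$, the complements $\q\setminus\Ball_r^\bullet(\q)$ and $\q'\setminus\Ball_r^\bullet(\q')$ are isomorphic as maps with marked inner boundary, then $\mu(\q)=\mu(\q')$. The key observation is that any SAW of length $n$ from the root decomposes into alternating excursions inside and outside the hull; since the hull has fixed finite size, the inside pieces have total length bounded by $|\Ball_r^\bullet|$, and the number of entries/exits through the inner boundary is bounded by its perimeter. Therefore the bookkeeping that ``translates'' a SAW between $\q$ and $\q'$ introduces at most a polynomial-in-$n$ multiplicative factor, which disappears in the $n$-th root limit.

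\textbf{Step 2: tail-measurability.} From Step 1, $\mu(\Q_\infty)$ is measurable with respect to the tail $\sigma$-algebra $\mathcal{T}$ generated by the complements of the hulls $\Ball_r^\bullet(\Q_\infty)$, $r\to\infty$ (understood as maps marked on their inner boundary).

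\textbf{Step 3: triviality of $\mathcal{T}$.} The spatial Markov property of the UIPQ states that conditionally on $\Ball_r^\bullet(\Q_\infty)$, the complementary map is distributed as $\Q_{\infty,\ell}$, where $2\ell$ is the perimeter of the inner boundary of the hull; in particular, this conditional law depends on the hull only through $\ell$. A standard approximation argument, approximating any tail event by events in $\sigma(\Ball_r^\bullet(\Q_\infty))$ and passing to the limit, then yields that $\mathcal{T}$ is $\P$-trivial. Hence $\mu(\Q_\infty)$, being tail-measurable, is almost surely constant.

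\textbf{Main obstacle.} The delicate point is the combinatorial estimate in Step 1: one must check that the number of inside/outside interleavings of a SAW of length $n$ contributes at most polynomially in $n$, uniformly in the shape of the hull. This is essentially bookkeeping but requires care to avoid a hidden exponential factor coming from the ordering of boundary crossings.
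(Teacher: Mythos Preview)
Your Step~3 contains a genuine gap. The spatial Markov property of the UIPQ does \emph{not} yield tail triviality via any ``standard approximation argument'', precisely because the conditional law of the complement given the hull depends on the perimeter $\ell_r$. In the Kolmogorov $0$--$1$ template one approximates a tail event $A$ by some $B\in\sigma(\Ball_r^\bullet(\Q_\infty))$ and then uses that $A\in\mathcal{F}_r$ is independent of $B$; here this independence fails, since both $\sigma$-algebras see $\ell_r$. Conditioning on $\ell_r$ gives only \emph{conditional} independence, and to finish one would need $\P(A\mid\ell_r)$ not to depend on $\ell_r$, i.e.\ that the tail law of $\Q_{\infty,\ell}$ is the same for every $\ell$. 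That is a nontrivial coupling statement about the family $(\Q_{\infty,\ell})_{\ell\geq 1}$ which you have not addressed, and it is essentially the entire content of the $0$--$1$ law in this setting.

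The paper's proof uses a different zero--one principle and avoids this obstacle entirely. It first cites a lemma of Lacoin \cite[Lemma~2.1]{Lac12} to the effect that on any connected locally finite graph the connective constant is independent of the base vertex, so $\mu(\Q_\infty)$ is invariant under changing the root edge; it then invokes the ergodicity of the UIPQ under re-rooting (\cite[Theorem~7.2]{Ang03} for triangulations, easily adapted) to conclude that any re-rooting invariant variable is almost surely constant. Note that Lacoin's lemma is both weaker and easier than your Step~1: it only moves the root, rather than modifying an entire finite region, so the bookkeeping you flag as your ``main obstacle'' is simply not needed.
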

\proof It follows from Lemma 2.1 of \cite{Lac12} that the value of the connective constant on an infinite connected locally finite graph does not depend on the starting point  of the self-avoiding walks (actually Lacoin assumes a uniform bound on the degrees but local finiteness is sufficient for the proof). In particular, the value of the connective constant on the UIPQ is invariant by changing the root edge. By ergodicity of the UIPQ (see \cite[Theorem 7.2]{Ang03} for the case of triangulations, which is easily adapted to our quadrangular case) any random variable which is invariant under changing the root edge must be almost surely constant (see \cite[Theorem 3.1]{AHNR15}). Hence $\mu(\Q_{\infty})$ is almost surely constant. \endproof

Although the value of the almost sure connective constant of the UIPQ (sometimes called the quenched connective constant) remains a mystery, we can precisely compute the average number of self-avoiding walks of any given type in the UIPQ.
\begin{proposition}[Annealed connective constant] With the same notation as \eqref{asympcp}, for any $b \geq 0$ and $f \geq 1$ we have 
$$ \mathbb{E}[\# \SAW{b}{f}(\Q_{\infty})] = \frac{C_{b+f}}{C_{1}}  = \left(\frac{9}{2}\right)^{b+f+o(b+f)}.$$
Hence we could say that the ``annealed'' connective constant of the UIPQ is $9/2$.
\end{proposition}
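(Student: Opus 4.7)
The plan is to compute $\mathbb{E}[\# \SAW{b}{f}(\Q_n)]$ exactly at the finite level using the zipper bijection, and then to pass to the infinite-volume limit $n \to \infty$. The asymptotic estimate on $C_p$ will take care of the second equality.

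\emph{Finite-$n$ identity.} Set $p = b + f$. The zipper $\Z{b}{f}$ is a bijection between $\sQ_{n,p}$ and the set of pairs $(\q, \mathbf{w})$ with $\q \in \sQ_n$ and $\mathbf{w} \in \SAW{b}{f}(\q)$. Summing over $\q \in \sQ_n$ yields
$$\sum_{\q \in \sQ_n} \# \SAW{b}{f}(\q) \;=\; \# \sQ_{n,p},$$
and dividing by $\# \sQ_n = \# \sQ_{n,1}$ gives the exact identity
$$\mathbb{E}\big[\# \SAW{b}{f}(\Q_n)\big] \;=\; \frac{\# \sQ_{n, b+f}}{\# \sQ_{n, 1}}.$$

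\emph{Asymptotic in $n$.} Plugging \eqref{asympqnpt} into this ratio, the common factor $12^n n^{-5/2}$ cancels and we obtain
$$\mathbb{E}\big[\# \SAW{b}{f}(\Q_n)\big] \;\xrightarrow[n \to \infty]{}\; \frac{C_{b+f}}{C_1}.$$

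\emph{Passage to the UIPQ.} Since $\# \SAW{b}{f}(\q)$ is a function of $\Ball_{b+f}(\q)$ only and $\Q_n \to \Q_\infty$ for $\mathrm{d_{loc}}$, Skorokhod's representation provides an almost sure convergence $\# \SAW{b}{f}(\Q_n) \to \# \SAW{b}{f}(\Q_\infty)$. Fatou's lemma then immediately gives the upper bound $\mathbb{E}[\# \SAW{b}{f}(\Q_\infty)] \leq C_{b+f}/C_1$. For the matching lower bound one needs uniform integrability of the family $\{\# \SAW{b}{f}(\Q_n)\}_n$: this can be obtained by bounding crudely $\# \SAW{b}{f}(\q) \leq \prod_{v \in \Ball_{b+f}(\q)} \deg(v)$ and invoking uniform moment bounds (in $n$) on vertex degrees inside balls of fixed radius in $\Q_n$, which are standard consequences of the Schaeffer/BDG bijection. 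Combining both bounds yields $\mathbb{E}[\# \SAW{b}{f}(\Q_\infty)] = C_{b+f}/C_1$, and the asymptotic form $(9/2)^{b+f+o(b+f)}$ then follows directly from \eqref{asympcp}.

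The main (and really only) technical obstacle is the uniform integrability in the last step; everything else is an exact bijective computation followed by a ratio of two equivalents. Should one wish to sidestep the UI point altogether, it suffices to interpret the ``annealed expectation" as the limit $\lim_n \mathbb{E}[\# \SAW{b}{f}(\Q_n)]$, in which case the first two steps already furnish the stated value $C_{b+f}/C_1$.
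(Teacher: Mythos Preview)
Your overall architecture coincides with the paper's: the exact finite-$n$ identity via the zipper bijection, the ratio of asymptotics \eqref{asympqnpt} giving $C_{b+f}/C_1$, and the passage to the UIPQ via local convergence plus uniform integrability. The only substantive divergence is in how you justify uniform integrability, and that is where your argument has a gap.

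Your proposed route---bounding $\# \SAW{b}{f}(\q)$ by a product of degrees over $\Ball_{b+f}(\q)$ and invoking ``standard'' uniform moment bounds from the Schaeffer/BDG bijection---is not adequately justified. The ball $\Ball_{b+f}(\Q_n)$ has a random number of vertices (not uniformly bounded in $n$), so controlling higher moments of a product over all of them is genuinely delicate; it is not a standard off-the-shelf estimate. Even if you replace the product by $(\max_{v \in \Ball_{b+f}}\deg v)^{b+f}$, you still need uniform-in-$n$ control of high moments of the maximal degree in a ball of fixed radius, which requires real work. Your escape hatch of reinterpreting the statement as $\lim_n \mathbb{E}[\#\SAW{b}{f}(\Q_n)]$ is not available: the proposition is about $\Q_\infty$ itself.

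The paper handles UI in a self-contained and much cleaner way, using the very bijection you already invoked. By \eqref{eq:density1}, the density of the underlying map $\Q_n^{b,f}$ of $\Z{b}{f}(\Q_{n,b+f})$ with respect to $\Q_n$ is proportional to $\#\SAW{b}{f}(\cdot)$, which gives
\[
\mathbb{E}\big[\#\SAW{b}{f}(\Q_n)\,\mathbf{1}_{\#\SAW{b}{f}(\Q_n)>A}\big]
=\frac{\#\sQ_{n,b+f}}{\#\sQ_{n,1}}\;\mathbb{P}\big(\#\SAW{b}{f}(\Q_n^{b,f})>A\big).
\]
The prefactor is bounded in $n$ by \eqref{asympqnpt}, and since $\Q_n^{b,f}$ converges locally (Proposition~\ref{prop:annealedSAW}), the variables $\#\SAW{b}{f}(\Q_n^{b,f})$ are tight, so the tail probability can be made uniformly small by choosing $A$ large. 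This change-of-measure trick avoids any degree estimates entirely; you should replace your UI paragraph with it.
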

\proof  Let $b\geq 0$ and $f \geq 1$. Having fixed $n$, if $\Q_{n}$ is a uniform quadrangulation of the sphere with $n$ faces, by the bijection between quadrangulations endowed with a $(b,f)$-SAW and quadrangulations of the $2(b+f)$-gon we have (thanks to \eqref{eq:density1} and \eqref{asympcp})
$$ \mathbb{E}[\# \SAW{b}{f}(\Q_{n})] = \frac{\# \sQ_{n,b+f}}{ \#  \sQ_{n,1}}  \xrightarrow[n\to\infty]{} \frac{C_{b+f}}{C_{1}}.$$
On the other hand, the convergence of uniform quadrangulations towards the UIPQ implies that for any fixed $b,f$ the random variables $\# \SAW{b}{f}(\Q_{n})$ converge in law towards $\# \SAW{b}{f}(\Q_{\infty})$ as $n \to \infty$. The statement of the proposition thus follows once we prove that $(\# \SAW{b}{f}(\Q_{n}))_{n \geq 0}$ is uniformly integrable. In other words, for any $ \varepsilon >0$ we want to find $ A>0$ such that $ \mathbb{E}[\# \SAW{b}{f}(\Q_{n}) \mathbf{1}_{\# \SAW{b}{f}(\Q_{n}) > A}] \leq \varepsilon$ for all $n \geq 0$. If we denote $ \Z{b}{f}(\Q_{n,b+f}) =(\Q_{n}^{b,f}, \mathbf{w}^{b,f}_{n})$ we re-express the last quantity using the fact \eqref{eq:density1} that the density of $\Q_{n}^{b,f}$ with respect to $\Q_{n}$ is  proportional to $\# \SAW{b}{f}(\Q_{n})$:
\begin{eqnarray} \label{eq:uniforintegrable}
\mathbb{E}\big[\# \SAW{b}{f}(\Q_{n}) \1_{\# \SAW{b}{f}(\Q_{n}) > A}\big] & \underset{\eqref{eq:density1}}{=}  \frac{ \# \sQ_{n,1}}{ \# \sQ_{n,(b+f)}} \mathbb{E}\big[\mathbf{1}_{\# \SAW{b}{f}\big(\Q_{n}^{b,f}\big) > A}\big].  \end{eqnarray} 
Since we know that $\Q_{n}^{b,f}$ converges locally in distribution (see the discussion above Proposition \ref{prop:annealedSAW}), it follows that $\big(\# \SAW{b}{f}\big(\Q_{n}^{b,f}\big)\big)_{n \geq 1}$ converges in distribution as well and in particular is tight.  Using this fact and the asymptotics \eqref{asympqnpt} and \eqref{asympcp} we can find $A$ large enough so that the right-hand side of \eqref{eq:uniforintegrable} is less than $\varepsilon$ uniformly in $n \geq 0$ as desired. \endproof 

\begin{open}[Coincidence of the quenched and annealed connective constants] Combining the last two results we have $\mu(\Q_{\infty}) \leq 9/2$. Do we actually have a strict inequality?
\end{open}
\paragraph{Order and Disorder.}
The question of the coincidence of the quenched and annealed connective constants is usually referred to as weak/strong disorder in the statistical physics literature, see e.g.~\cite{Lac12,LDM91}. However, our context is different from the standard one where an underlying probability measure is tilted via a martingale biasing, so we shall use this section to clarify what we mean here by disorder. \medskip 

For simplicity we restrict ourselves to the case of a two-sided SAW in order to connect this section with Theorem \ref{thm:singular}. To simplify notation a little, we shall write $\Q_{\infty}^{p,\leftrightarrow}$ for the underlying rooted quadrangulation of $ \Z{p}{p}(\Q_{\infty,2p})$. Since the random variable giving the number of self-avoiding paths of a given type is continuous for the local topology, we can combine Proposition~\ref{prop:annealedSAW} with \eqref{eq:density1} to deduce that the Radon-Nikodym derivative of $\Q_{\infty}^{p,\leftrightarrow}$ with respect to $\Q_{\infty}$ is given by 
$$ \frac{ \mathrm{d} \Q_{\infty}^{p,\leftrightarrow}}{ \mathrm{d}\Q_{\infty}} = \frac{C_{1}}{C_{2p}} \# \SAW{p}{p}(\Q_{\infty}).$$
Hence $\Q_{\infty}^{p,\leftrightarrow}$ is only a ``mild'' modification of $\Q_{\infty}$, since the laws of the two random quadrangulations are equivalent. However, the distortion effect might become dramatic as $p \to \infty$. Borrowing terminology from statistical physics, we will say that \emph{disorder} holds if the law of $\Q_{\infty}^{\leftrightarrow} = \lim_{p} \Q_{\infty}^{p,\leftrightarrow}$ is singular with respect to that of $\Q_{\infty}$. This is exactly the content of Theorem \ref{thm:singular} which we will prove below. First, however, let us state a direct corollary:

\begin{cor} \label{cor:fewSAW} We have $\frac{C_{1}}{C_{2p}} \# \SAW{p}{p}(\Q_{\infty}) \to 0$ in probability as $p\to \infty$. \end{cor}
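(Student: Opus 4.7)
The plan is to read the corollary off as a soft consequence of Theorem~\ref{thm:singular} combined with two ingredients already recorded earlier: the Radon--Nikodym identity
\[
\frac{\mathrm{d}\mathbb{P}_{\Q_{\infty}^{p,\leftrightarrow}}}{\mathrm{d}\mathbb{P}_{\Q_\infty}} \;=\; f_p \;:=\; \frac{C_1}{C_{2p}}\,\#\SAW{p}{p}(\cdot),
\]
and the weak convergence $\Q_{\infty}^{p,\leftrightarrow}\to\Q_\infty^{\leftrightarrow}$ provided by Proposition~\ref{prop:annealedSAW}. In this language, the statement we must prove is exactly that $f_p\to 0$ in probability under the reference law $\nu := \mathbb{P}_{\Q_\infty}$.

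For this I would invoke the following standard measure-theoretic fact: on a Polish space, if $\mu_p = f_p\cdot\nu$ are probability measures converging weakly to a probability $\mu$ with $\mu\perp\nu$, then $f_p\to 0$ in $\nu$-probability. The proof is two lines. Choose a Borel set $B$ with $\mu(B)=1$ and $\nu(B)=0$; by outer regularity of $\nu$ (applicable since by Ulam's theorem every finite Borel measure on a Polish space is Radon), for any $\eta>0$ one can enclose $B$ in an open set $V$ with $\nu(V)<\eta$. Portmanteau then gives $\liminf_p \mu_p(V)\ge\mu(V)=1$, so $\int_{V^c} f_p\,\mathrm{d}\nu=\mu_p(V^c)\to 0$, and a Markov inequality on $V^c$ yields, for any $\epsilon>0$,
\[
\nu(f_p>\epsilon)\;\le\;\nu(V) + \epsilon^{-1}\!\int_{V^c}f_p\,\mathrm{d}\nu \;\xrightarrow[p\to\infty]{}\; \eta.
\]
Since $\eta>0$ was arbitrary, $\nu(f_p>\epsilon)\to 0$.

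Applying this with $\nu$ the law of $\Q_\infty$, $\mu_p$ that of $\Q_\infty^{p,\leftrightarrow}$, and $\mu$ that of $\Q_\infty^\leftrightarrow$ settles the corollary. There is really no obstacle beyond Theorem~\ref{thm:singular} itself, which supplies the only nontrivial input ($\mu\perp\nu$); the space of (infinite, locally finite) rooted quadrangulations for the local topology is Polish so outer regularity is unproblematic, and the rest of the argument is purely soft.
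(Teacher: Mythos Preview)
Your argument is correct and follows essentially the same route as the paper: both reduce the claim to the abstract statement that if $\mu_p=f_p\cdot\nu\Rightarrow\mu$ with $\mu\perp\nu$ on a Polish space then $f_p\to 0$ in $\nu$-probability, and then combine a regularity step with Portmanteau and a Markov bound. The only cosmetic difference is that the paper uses inner regularity of $\nu$ to produce a closed set $F$ with $\nu(F)\ge 1-\varepsilon$ and $\mu(F)=0$ (and the $\limsup$ direction of Portmanteau), whereas you use outer regularity of $\nu$ to enclose a $\mu$-full set in an open $V$ with $\nu(V)<\eta$ (and the $\liminf$ direction); these are dual formulations of the same idea.
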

In other words, as $p \to \infty$ the typical number of $(p,p)$-SAWs on the UIPQ becomes much less than its expectation. Notice that even when disorder holds, the quenched and the annealed connective constants may very well be equal.

\proof We prove the result in greater generality. Let $(E,d)$ be a Polish space and $\mu,\nu,(\mu_{n})_{n\geq 0}$ be probability measures on $E$ such that $\mu_{n} \to \mu$ in distribution as $n\to\infty$ and such that $\mu_{n}$ is absolutely continuous with respect to $\nu$, with density $f_{n}$ (here $\mu_{p}$ is the law of $\Q_{\infty}^{p,\leftrightarrow}$, $\mu$ the law of $\Q_{\infty}^{\leftrightarrow}$ and $\nu$ that of the UIPQ). Assuming $\nu$ and $\mu$ are singular with respect to each other, the goal is to prove that 
$$ f_{n}  \to 0, \quad \mbox{ in probability for } \nu.$$
Notice that, in general, there is no equivalence between the fact that $\nu$ and $\mu$ are singular and the fact that $f_{n} \to 0$ in $\nu$-probability. We pick a measurable subset $A$ such that $\mu(A)=0$ and $\nu(A)=1$. By regularity we can find a closed subset $F \subseteq A$ such that $\nu(F) \geq 1- \varepsilon$ and a fortiori $\mu(F) = 0$. By the Portmanteau theorem we thus have 
$$ 0=\mu(F) \geq \limsup_{n \to \infty} \mu_{n}(F) = \limsup_{n \to \infty} \int \mathrm{d}\nu \, f_{n} \mathbf{1}_{F}.$$
It follows that $ \nu( \{f_{n} \geq \varepsilon\}) \leq \nu( \{f_{n} \geq \varepsilon\} \cap F) + \varepsilon \leq  \varepsilon^{-1} \int \mathrm{d}\nu \, f_{n} \mathbf{1}_{F} + \varepsilon$, which is eventually less than $2 \varepsilon$ by the above display. We have thus proved that $f_{n} \to 0$ in $\nu$-probability as desired. \endproof

\section{Displacement of the distinguished SAW in $\Q_\infty^\rightarrow$ and $\Q_\infty^\leftrightarrow$} \label{sec:displacement}
This section is devoted to proving Theorem \ref{thm:diffusive}. Recall the notation $ (\mathsf{P}_{i}^{\rightarrow})_{i \geq 0}$ and $ (\mathsf{P}^{\leftrightarrow}_{i})_{i \in \mathbb{Z}}$ for the distinguished self-avoiding walks on $ \Q_{\infty}^{\rightarrow}$ and $\Q_{\infty}^{\leftrightarrow}$ respectively. From our previous paper \cite{CCuihpq}, the following is easily inferred:
\begin{eqnarray} \label{cor:old bound} \mathrm{d}_{ \mathrm{gr}}^{\Q_{\infty}^{\rightarrow}}( \mathsf{P}^\rightarrow_{0}, \mathsf{P}^\rightarrow_{n})  \preceq  \sqrt{n} & \mbox{ and }& \mathrm{d}_{ \mathrm{gr}}^{\Q_{\infty}^{\leftrightarrow}}( \mathsf{P}^\leftrightarrow_{0}, \mathsf{P}^\leftrightarrow_{\pm n})  \preceq    \sqrt{n}, \qquad n\geq 0,  \end{eqnarray} 
where the notation $ \mathrm{d}_{ \mathrm{gr}}^{ \mathfrak{q}}(\vec{u},\vec{v})$ stands for the minimum graph distance between an endpoint of $\vec{u}$ and an endpoint of $\vec{v}$ in the quadrangulation $ \mathfrak{q}$. This is quite immediate from \cite[Proposition 6.1]{CCuihpq}: if we write $(x_{i})_{ i \in \mathbb{Z}}$ for the boundary vertices of a UIHPQ $\UIHPQ$ (labelling them in the natural way, so that $x_{0} \to x_{1}$ is the root edge) then the construction of $\Q_{\infty}^{\rightarrow}$ from $\UIHPQ$ may only decrease distances, so that
$$ \dgr^{\Q_{\infty}^{\rightarrow}}(\mathsf{P}_{0}^{\rightarrow}, \mathsf{P}_{n}^{\rightarrow}) \leq \dgr^{\UIHPQ}(x_{0},x_{n}).$$
Thus $\mathrm{d}_{ \mathrm{gr}}^{\Q_{\infty}^{\rightarrow}}(\mathsf{P}_{0}^{\rightarrow}, \mathsf{P}_{n}^{\rightarrow}) \preceq \sqrt{n}$, and the case of $\Q_\infty^\leftrightarrow$ is analogous.\medskip

To establish Theorem \ref{thm:diffusive} what we wish to obtain is a corresponding lower bound
\begin{eqnarray} \label{prop:lower bound} \mathrm{d}_{ \mathrm{gr}}^{\Q_{\infty}^{\rightarrow}}( \mathsf{P}^\rightarrow_{0}, \mathsf{P}^\rightarrow_{n})  \succeq  \sqrt{n} & \mbox{ and }& \mathrm{d}_{ \mathrm{gr}}^{\Q_{\infty}^{\leftrightarrow}}( \mathsf{P}^\leftrightarrow_{0}, \mathsf{P}^\leftrightarrow_{\pm n})  \succeq    \sqrt{n}, \qquad n\geq 0.  \end{eqnarray} 
We shall establish such a bound by constructing a sequence of nested ``fences'' $(P'_i)_{i \geq 1}$ inside $\UIHPQ$, that is a sequence of disjoint paths whose endpoints lie on the boundary of $\UIHPQ$ and are of the form $x_{-r(i)}$ and $x_{r(i)}$ for an increasing integer sequence $r(i)$. After the folding of $\UIHPQ$ to form $\Q_{\infty}^{\rightarrow}$ these paths will create nested loops so that $k \geq r(i)$ implies $ \dgr(x_{0},x_{k}) \geq i$, see Figure \ref{fig:fences one-ended}.

The construction of these fences will be achieved by a (deterministic) algorithm which, when applied to the UIHPQ, will yield a random sequence $(P'_i)_{i \geq 1}$ for which one wishes to control the ``growth'' on the boundary $(r(i))_{ i \geq 1}$. This will be possible thanks to the spatial Markov property of the simple boundary UIHPQ.

\subsection{Building one fence}\label{algorithm}

Suppose you are given a (deterministic) one-ended quadrangulation $ \mathfrak{h}$ with an infinite simple boundary on the sequence of (successive) vertices $(v_i)_{i\in\mathbb{Z}}$ so that $v_{0} \to v_{1}$ is the root edge, and a positive integer $k$.

We shall build a `fence' $P$ that avoids vertices $v_1,\ldots,v_k$ via a peeling process. We first set $e_0$ to be the root edge and then iteratively perform the following loop, starting with $i=0$:
\begin{itemize}
	\item reveal the face $f_i$ lying left of $e_i$; 
	\item consider the \emph{rightmost} vertex $v_{\rho_i}$ of $f_i$ lying on the boundary of $ \mathfrak{h}$; set $e_{i+1}$ to be the \emph{rightmost} edge in $ \mathfrak{h}$ which has $v_{\rho_{i}}$ as an endpoint and belongs to $f_i$ (oriented towards $v_{\rho_{i}}$);
	\item if $\rho_{i}>k$, then \texttt{STOP}; otherwise restart the loop, increasing $i$ by 1.
\end{itemize}

Notice that each revealed face does have a vertex on the boundary, thus the operations required are well defined, and that the sequence $(\rho_{i})_{i\geq 0}$ is weakly increasing, so that (thanks to local finiteness of $ \mathfrak{h}$) the algorithm does eventually terminate (see Figure \ref{fig:symmetry}).

Once the end condition is met (at -- say -- iteration $T$, where iterations are numbered from $0$), we have a final (connected) set of revealed faces $F=\{f_0,\ldots,f_T\}$. We consider then the hull $F^{\bullet}$ of $F$ obtained by ``filling in'' any finite holes between $F$ and the boundary of $\mathfrak{h}$, and set our fence $P$ to be the inner boundary of $ F^{\bullet}$ (i.e.~the part of the boundary of $F^{\bullet}$ which is not in common with the boundary of $ \mathfrak{h}$). It is easy to show that $P$ is indeed a simple path, that one of its endpoints is $v_{r+k}$ for some $r \geq 1$ while the other is some $v_{1-\ell}$ with $\ell\geq 1$, and that it has no vertices on the boundary of $ \mathfrak{h}$ except for its endpoints, so that it does not intersect $\{v_1,\ldots,v_k\}$.

\begin{definition} We call the quantities $\ell$ and $r$ respectively the \emph{left} and \emph{right} \emph{overshoots} of the construction. \end{definition}

\begin{rem}\label{symmetry of fence-building}We make here an alternative direct definition of $F^{\bullet}$ which will be useful later. We claim that $ F^{\bullet}$ is also the hull of the set of faces of the quadrangulation having (at least) a vertex in the set $\{v_1,\ldots,v_k\}$. One inclusion is clear, since all faces with a vertex in $\{v_1,\ldots,v_k\}$ must lie below $P$, which separates $\{v_1,\ldots,v_k\}$ from infinity; the other is also clear, since all faces revealed during the construction of $P$ have a vertex in the set $\{v_{1}, ... , v_{k}\}$.

This alternative construction of $P$ highlights the inherent symmetry in the roles of the left and right overshoots. If we flip the quadrangulation $ \mathfrak{h}$ (exchanging left and right) and relabel its boundary vertices as $(v'_i)_{i\in\mathbb{Z}}$ so that $v'_i$ is $v_{k+1-i}$, then perform the algorithm to build a fence as above (starting with $e'_0=(v'_0,v'_1)$, which corresponds to $(v_{k+1},v_k)$), then the left overshoot of this new fence is the right overshoot of $P$ and vice versa (see Figure~\ref{fig:symmetry}).
\end{rem}

\begin{figure}\centering
	\begin{tikzpicture}
	\begin{pgfonlayer}{nodelayer}
		\node [style=real, label=below:$v_5$] (0) at (5, -0) {};
		\node [style=real, label=below:$v_6$] (1) at (6, -0) {};
		\node [style=real, label=below:$v_4$] (2) at (4, -0) {};
		\node [style=real, label=below:$v_3$, fill=red] (3) at (3, -0) {};
		\node [style=real, label=below:$v_2$, fill=red] (4) at (2, -0) {};
		\node [style=real, label=below:$v_1$, fill=red] (5) at (1, -0) {};
		\node [style=real, label=below:$v_0$] (6) at (0, -0) {};
		\node [style=real, label=below:$v_{-1}$] (7) at (-1, -0) {};
		\node [style=real, label=below:$v_{-2}$] (8) at (-2, -0) {};
		\node [style=real, label=below:$v_{-3}$] (9) at (-3, -0) {};
		\node [style=real, label=below:$v_{-4}$] (10) at (-4, -0) {};
		\node [style=real] (11) at (3.25, 1.75) {};
		\node [style=real] (12) at (3.25, 1) {};
		\node [style=real] (13) at (4.5, 0.25) {};
		\node [style=real] (14) at (-1.25, 2.25) {};
		\node [style=real] (15) at (1, 3) {};
		\node [style=real] (16) at (0.5, 0.25) {};
		\contourlength{2pt}
		\node [style=none] (17) at (0.5, -0.3) {\contour{white}{$e_0$}};
		\node [style=none] (18) at (0.75, 0.5) {\contour{white}{$e_1$}};
		\node [style=none] (19) at (0, 0.75) {\contour{white}{$e_2$}};
		\node [style=none] (20) at (0.5, 1.75) {\contour{white}{$e_3$}};
		\node [style=none] (21) at (2.5, 1.5) {\contour{white}{$e_4$}};
	\end{pgfonlayer}
	\begin{pgfonlayer}{edgelayer}
	\fill[gray!30] (12.center) to (4.center) to (7.center) [bend right=60, looseness=1.25] to (10.center) [bend left=30, looseness=1.00] to (14.center)--(15.center)--(11.center) [bend left, looseness=1.00] to (1.center) [bend right, looseness=0.75] to (12.center);

		\draw [b, bend left, looseness=1.00] (11) to (1);
		\draw [b, bend left=45, looseness=1.00] (4) to (11);
		\draw[b] (4) to (3);
		\draw[b] (3) to (2);
		\draw [b, bend left=75, looseness=1.75] (2) to (0);
		\draw[b] (0) to (1);
		\draw[b] (12) to (4);
		\draw [b, bend right, looseness=0.75] (1) to (12);
		\draw[b] (12) to (2);
		\draw[b] (13) to (0);
		\draw[b] (0) to (2);
		\draw [b, bend right=45, looseness=1.25] (4) to (7);
		\draw [b, bend right=60, looseness=1.25] (7) to (10);
		\draw [b, bend right, looseness=1.00] (14) to (10);
		\draw [b, bend left, looseness=1.00] (14) to (4);
		\draw[b] (14) to (15);
		\draw[b] (15) to (11);
		\draw[b] (10) to (9);
		\draw[b] (9) to (8);
		\draw[b] (8) to (7);
		\draw [b, bend left=90, looseness=1.75] (6) to (5);
		\draw[b] (7) to (6);
		\draw[b] (5) to (4);
		\draw[b] (16) to (6);
		\draw [b] (6) to (5);
		
		\draw [bend left, looseness=1.00] (11) to (1);
		\draw [<-, ultra thick, bend left=45, looseness=1.00] (4) to (11);
		\draw [red](4) to (3);
		\draw (3) to (2);
		\draw [bend left=75, looseness=1.75] (2) to (0);
		\draw (0) to (1);
		\draw (12) to (4);
		\draw [bend right, looseness=0.75] (1) to (12);
		\draw (12) to (2);
		\draw (13) to (0);
		\draw (0) to (2);
		\draw [<-, ultra thick, bend right=45, looseness=1.25] (4) to (7);
		\draw [bend right=60, looseness=1.25] (7) to (10);
		\draw [bend right, looseness=1.00] (14) to (10);
		\draw [->, ultra thick, bend left, looseness=1.00] (14) to (4);
		\draw (14) to (15);
		\draw (15) to (11);
		\draw (10) to (9);
		\draw (9) to (8);
		\draw (8) to (7);
		\draw [->, ultra thick, bend left=90, looseness=1.75] (6) to (5);
		\draw (7) to (6);
		\draw [red](5) to (4);
		\draw (16) to (6);
		\draw[->, ultra thick] (6) to (5);
	\end{pgfonlayer}
\end{tikzpicture}\\
	\begin{tikzpicture}[all edges/.style={very thick}]
	\begin{pgfonlayer}{nodelayer}
		\node [style=real, label=below:$v'_{-2}$] (0) at (5, -0) {};
		\node [style=real, label=below:$v'_{-1}$] (1) at (6, -0) {};
		\node [style=real, label=below:$v'_0$] (2) at (4, -0) {};
		\node [style=real, label=below:$v'_1$, fill=red] (3) at (3, -0) {};
		\node [style=real, label=below:$v'_2$, fill=red] (4) at (2, -0) {};
		\node [style=real, label=below:$v'_3$, fill=red] (5) at (1, -0) {};
		\node [style=real, label=below:$v'_4$] (6) at (0, -0) {};
		\node [style=real, label=below:$v'_{5}$] (7) at (-1, -0) {};
		\node [style=real, label=below:$v'_{6}$] (8) at (-2, -0) {};
		\node [style=real, label=below:$v'_{7}$] (9) at (-3, -0) {};
		\node [style=real, label=below:$v'_{8}$] (10) at (-4, -0) {};
		\node [style=real] (11) at (3.25, 1.75) {};
		\node [style=real] (12) at (3.25, 1) {};
		\node [style=real] (13) at (4.5, 0.25) {};
		\node [style=real] (14) at (-1.25, 2.25) {};
		\node [style=real] (15) at (1, 3) {};
		\node [style=real] (16) at (0.5, 0.25) {};
		\contourlength{2pt}
		\node [style=none] (17) at (3.5, -0.3) {\contour{white}{$e_0$}};
		\node [style=none] (18) at (2.75, 0.5) {\contour{white}{$e_1$}};
		\node [style=none] (19) at (2.25, 1.25) {\contour{white}{$e_2$}};
		\node [style=none] (20) at (0.5, 1.75) {\contour{white}{$e_3$}};
	\end{pgfonlayer}
	\begin{pgfonlayer}{edgelayer}
	\fill[gray!30] (12.center) to (2.center) to (4.center) [bend right=45, looseness=1.25] to (7.center) [bend right=60, looseness=1.25] to (10.center) [bend left=30, looseness=1.00] to (14.center)--(15.center)--(11.center) [bend left, looseness=1.00] to (1.center) [bend right, looseness=0.75] to (12.center);

		\draw [b, bend left, looseness=1.00] (11) to (1);
		\draw [b, bend left=45, looseness=1.00] (4) to (11);
		\draw[b] (4) to (3);
		\draw[b] (3) to (2);
		\draw [b, bend left=75, looseness=1.75] (2) to (0);
		\draw[b] (0) to (1);
		\draw[b] (12) to (4);
		\draw [b, bend right, looseness=0.75] (1) to (12);
		\draw[b] (12) to (2);
		\draw[b] (13) to (0);
		\draw[b] (0) to (2);
		\draw [b, bend right=45, looseness=1.25] (4) to (7);
		\draw [b, bend right=60, looseness=1.25] (7) to (10);
		\draw [b, bend right, looseness=1.00] (14) to (10);
		\draw [b, bend left, looseness=1.00] (14) to (4);
		\draw[b] (14) to (15);
		\draw[b] (15) to (11);
		\draw[b] (10) to (9);
		\draw[b] (9) to (8);
		\draw[b] (8) to (7);
		\draw [b, bend left=90, looseness=1.75] (6) to (5);
		\draw[b] (7) to (6);
		\draw[b] (5) to (4);
		\draw[b] (16) to (6);
		\draw [b] (6) to (5);
		
		\draw [bend left, looseness=1.00] (11) to (1);
		\draw [<-, ultra thick, bend left=45, looseness=1.00] (4) to (11);
		\draw [red](4) to (3);
		\draw [<-, ultra thick] (3) to (2);
		\draw [bend left=75, looseness=1.75] (2) to (0);
		\draw (0) to (1);
		\draw [->, ultra thick] (12) to (4);
		\draw [bend right, looseness=0.75] (1) to (12);
		\draw (12) to (2);
		\draw (13) to (0);
		\draw (0) to (2);
		\draw [bend right=45, looseness=1.25] (4) to (7);
		\draw [bend right=60, looseness=1.25] (7) to (10);
		\draw [bend right, looseness=1.00] (14) to (10);
		\draw [->, ultra thick, bend left, looseness=1.00] (14) to (4);
		\draw (14) to (15);
		\draw (15) to (11);
		\draw (10) to (9);
		\draw (9) to (8);
		\draw (8) to (7);
		\draw [bend left=90, looseness=1.75] (6) to (5);
		\draw (7) to (6);
		\draw [red](5) to (4);
		\draw (16) to (6);
		\draw (6) to (5);
	\end{pgfonlayer}
\end{tikzpicture}
\caption{\label{fig:symmetry}Above, the algorithm run with $k=3$ reveals 5 faces; the left overshoot is $|-4|+1=5$ and the right overshoot is $6-3=3$. Below, the flipped algorithm reveals only 3 faces, but produces the same ``fence'', with a left overshoot of $|-2|+1=3$ and a right overshoot of $8-3=5$.}
\end{figure}
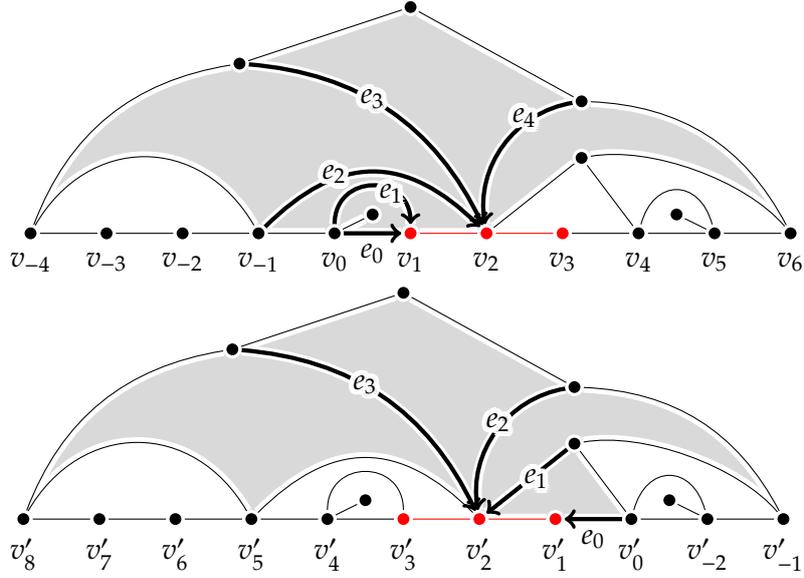

\subsection{One fence in the simple boundary UIHPQ}

Given $k \geq 0$ and a copy of the simple boundary UIHPQ $\UIHPQ$, whose boundary vertices we call $(x_i)_{i\in\mathbb{Z}}$, we can use the above algorithm to discover $F^{\bullet}$ and build a fence $P$ enclosing the vertices $x_{1}, \ldots, x_{k}$. The spatial Markov property of the UIHPQ already used in the proof of Lemma \ref{lem:indept} then show that conditionally on $F^{\bullet}$ the remaining part $ \UIHPQ \backslash F^{\bullet}$ (rooted for example at the rightmost edge on the boundary of $\UIHPQ$ which lies to the left of $F^{\bullet}$) has the law of a UIHPQ. The easiest way to see this is to say that $F^{\bullet}$ has been discovered by the mean of a peeling process (we shall discuss this later in more detail).

In this setting we denote by $X^{+,(k)}$ and $X^{-,(k)}$ respectively the right and left overshoots in the construction of $P$. Obviously the law of these overshoots depends on $k$, but the first observation we can make is that Remark~\ref{symmetry of fence-building} (combined with the fact that the law of the UIHPQ is invariant under ``flipping'')  implies that for all $k \geq 1$ 
 \begin{eqnarray} \label{eq:symm} X^{+,(k)} = X^{-,(k)} \qquad \mbox{ in distribution}.  \end{eqnarray}

This being said, we will stochastically bound the variable $X^{-,(k)}$ by a random variable which is independent of $k$. To do so, we may consider the peeling process on the UIHPQ which consists in running the loop of Section~\ref{algorithm} indefinitely (roughly speaking by setting $k=\infty$), and set $X$ to be the (random) index of the leftmost boundary vertex of a face that is eventually revealed by it; it can be shown that $X$ is almost surely well defined (i.e.~finite), and naturally we have $X^{-,(k)}\leq |X|+1$ stochastically for all $k \geq 1$.

The explicit construction of the peeling process makes it possible to show the following:
\begin{lemma}\label{lem:overshoot estimate}
For the peeling process based on Section~\ref{algorithm} applied to $\UIHPQ$ with $k = \infty$, the (random) index $X$ of the leftmost boundary vertex belonging to a  face that is eventually revealed is such that
$$\sup_{n\geq 0}\sqrt{n}\P(X\leq -n)<\infty.$$
\end{lemma}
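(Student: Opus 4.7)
My strategy is to realize the algorithm of \cref{algorithm} as a Markovian peeling exploration of the UIHPQ and to bound the leftward reach via a combination of an explicit single-step estimate and a control on subsequent contributions. First, I would formalize the Markovian structure: by the spatial Markov property of the simple-boundary UIHPQ (the same property used in the proof of \cref{lem:indept}), the sequence of revealed faces $(f_i)_{i\geq 0}$ can be sampled as a Markov chain, with the conditional law of $f_{i+1}$ given $(f_0,\ldots,f_i)$ depending only on the local structure of the current frontier. Each step has an associated ``leftward reach'' (the signed distance, along the boundary, between the leftmost boundary vertex of $f_i$ and the current right frontier $v_{\rho_i}$), and we wish to bound the supremum of these reaches over all $i$.

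Next, I would carry out the main combinatorial estimate for a single peeling step. Using the enumeration of simple-boundary quadrangulations from \eqref{asympqnpt}--\eqref{asympcp} (together with $Z(p) \sim \tfrac{2}{9\sqrt{3\pi}}\, p^{-5/2}(9/2)^p$), the peeling law for a single face of the simple-boundary UIHPQ can be computed in the style of \cite{Ang05,CMboundary}. When peeling the root edge of a fresh UIHPQ, one finds that the probability that the revealed face has a boundary vertex at position $\leq -n$ decays like $n^{-1/2}$, the exponent being traceable to $C_p \sim \sqrt{3p}/(2\pi)(9/2)^p$. Applied to the very first step of the algorithm, this already gives $\mathbb{P}(\text{leftmost boundary vertex of } f_0 \leq -n) \lesssim n^{-1/2}$, which is the expected leading contribution to $\mathbb{P}(X\leq -n)$.

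Finally, I would handle the contributions of subsequent steps $i \geq 1$. By the rightmost-peel rule, the edge $e_{i+1}$ is attached to $v_{\rho_i}$, which is nondecreasing in $i$; for $f_{i+1}$ to reach an outer-boundary vertex at position $\leq -n$, it must ``tunnel back'' through a substantial portion of already-revealed structure, which becomes combinatorially rare as $\rho_i$ grows. A careful union bound---or, alternatively, a perimeter-process random-walk argument tracking the inner boundary of $F_i^\bullet$ as a Markov chain with heavy-tailed jumps---shows that the cumulative contribution of these later steps remains $O(n^{-1/2})$, yielding $\sup_n \sqrt{n}\, \mathbb{P}(X\leq -n)<\infty$.

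The main obstacle is precisely this multi-step control: a naive union bound $\sum_i \mathbb{P}(\text{step } i \text{ reaches } \leq -n)$ would diverge, so one must exploit the rightward drift of the peeling frontier. A clean way to package this is to describe the perimeter-variation process of the unexplored region as a Markov chain with heavy-tailed jumps and apply a standard Kemperman-style hitting-time estimate. Alternatively, one can exploit \cref{symmetry of fence-building} together with the rerooting invariance of the UIHPQ to reduce the multi-step estimate to an iteration of the single-step one, at which point the combinatorics of \eqref{asympqnpt}--\eqref{asympcp} close the argument.
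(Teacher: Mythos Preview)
Your single-step estimate is off by one power of $n$: when peeling one edge of the UIHPQ, the probability that the revealed face swallows at least $n$ boundary edges is $\sim C\,n^{-3/2}$, not $n^{-1/2}$ (this follows from $Z(p)\sim c\,p^{-5/2}(9/2)^p$, or equivalently from \eqref{eq:infopeeling}, which gives $\mathbb{P}(\mathcal{S}=k)\sim C k^{-5/2}$). So the first step is \emph{not} the leading contribution to $\mathbb{P}(X\leq -n)$; the exponent $-1/2$ emerges only after summing over all steps, which is exactly the part of your argument you leave vague. A naive union bound over steps indeed diverges, and the ``rightward drift'' heuristic you invoke does not by itself produce the correct tail without a quantitative input.

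The paper's fix is to package the whole multi-step problem into a single random-walk functional. One tracks the algebraic variation $Y_i$ of the portion of the frontier lying to the left of the current peeled edge; the peeling Markov property makes $(Y_i)$ a random walk with i.i.d.\ increments, positive mean $\mathbb{E}[\Delta Y]=\tfrac12$, and negative tail $\mathbb{P}(\Delta Y\leq -k)\sim C' k^{-3/2}$. A geometric check gives $X=\inf_{i\geq 0}Y_i$, and then a classical result on the tail of the global minimum of a transient random walk with heavy-tailed negative increments (Veraverbeke~\cite{Ver77}) yields $\mathbb{P}(\inf Y_i\leq -n)\leq C'' n^{-1/2}$. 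Your ``perimeter-process random-walk'' alternative is in fact this argument in disguise; to make it go through you need to (i) identify the correct walk, (ii) observe that $X$ is its infimum, and (iii) invoke Veraverbeke rather than a Kemperman/ballot-type estimate, which is not adapted to the infimum tail.
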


For the sake of completeness, we shall first devote a subsection to an explicit description of the peeling process (based on \cite{ACpercopeel}, to which we refer the reader for details), and then use it to give a proof of the above lemma. Before doing so, we deduce the technical corollary that we will use:
\begin{cor} \label{cor:coupling} There exists a (law of a) random variable $O \geq 1$ whose tail satisfies $ \mathbb{P}( O \geq n) \leq C n^{-1/2}$ for some $C>0$ and such that for any $k \geq 1$ we can couple $O$ and the overshoots $(X^{-,(k)},X^{+,(k)})$ so that  $$ \max(X^{-,(k)},X^{+,(k)})  \leq O.$$
\end{cor}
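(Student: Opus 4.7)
The plan is to extract the corollary as a purely formal consequence of Lemma \ref{lem:overshoot estimate} combined with the reflection symmetry \eqref{eq:symm} and the dominance $X^{-,(k)} \leq |X|+1$ that is already recorded just above the lemma. The key observation is that the tail bound provided by Lemma \ref{lem:overshoot estimate} is uniform in $k$, because the dominating variable $|X|+1$ comes from the \emph{infinite} peeling process, whose definition does not involve $k$ at all.

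Concretely, I would first combine the union bound with \eqref{eq:symm} to write
\[
\P\bigl(\max(X^{-,(k)},X^{+,(k)}) \geq n\bigr) \;\leq\; \P(X^{-,(k)}\geq n)+\P(X^{+,(k)}\geq n) \;=\; 2\,\P(X^{-,(k)}\geq n).
\]
Next, using $X^{-,(k)} \leq |X|+1$ stochastically and Lemma \ref{lem:overshoot estimate}, I would deduce
\[
\P\bigl(\max(X^{-,(k)},X^{+,(k)}) \geq n\bigr) \;\leq\; 2\,\P(X\leq -(n-1)) \;\leq\; \frac{C'}{\sqrt{n}}
\]
for all $n\geq 1$ and some constant $C'>0$ independent of $k$ (possibly after enlarging $C'$ to absorb the shift by one).

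It remains to realise this domination as an actual coupling with a $k$-independent variable $O$. I would simply declare the law of $O$ by the survival function $\P(O \geq n) := \min(1, C'/\sqrt{n})$ for integers $n \geq 1$, so that $O \geq 1$ almost surely and the tail bound of the statement holds. Since this survival function dominates that of $\max(X^{-,(k)},X^{+,(k)})$ pointwise in $n$ and uniformly in $k$, the standard quantile coupling (take $U$ uniform on $[0,1]$ and set $O = F_O^{-1}(U)$, $Y = F_Y^{-1}(U)$ for $Y := \max(X^{-,(k)},X^{+,(k)})$) provides, for each $k$, a joint realisation with $Y \leq O$ almost surely and $O$ having the prescribed law.

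The proof involves no serious obstacle: it is a pure repackaging of Lemma \ref{lem:overshoot estimate}. The only subtle point to verify — and the reason for insisting on the dominance $X^{-,(k)} \leq |X|+1$ rather than working with $X^{-,(k)}$ directly — is precisely the $k$-uniformity of the tail bound, which is exactly what lets us choose a single law for $O$ valid simultaneously for all $k$.
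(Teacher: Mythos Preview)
Your proof is correct and follows essentially the same route as the paper: a union bound on the maximum, the symmetry \eqref{eq:symm} to reduce to $X^{-,(k)}$, the stochastic domination $X^{-,(k)}\le |X|+1$ together with Lemma~\ref{lem:overshoot estimate}, and then a standard quantile coupling. The paper's version differs only cosmetically (it splits at $n/2$ rather than $n$ in the union bound, which is unnecessary for a maximum, and it invokes ``standard coupling arguments'' without spelling out the quantile construction you give).
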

\proof It suffices to estimate the tail of $\max(X^{-,(k)},X^{+,(k)})$. For $n \geq 1$ we have 
  \begin{eqnarray*} \mathbb{P}(\max(X^{-,(k)},X^{+,(k)}) \geq n) &\leq&  \mathbb{P}( X^{+,(k)} \geq n/2)+  \mathbb{P}( X^{-,(k)} \geq n/2)\\
  & \underset{ \eqref{eq:symm}}{=}& 2\mathbb{P}( X^{-,(k)} \geq n/2)\\
  & \leq & 2\mathbb{P}( -X \geq n/2-1) \underset{ \mathrm{Lem.\ } \ref{lem:overshoot estimate} }{\leq} C n^{-1/2}.  \end{eqnarray*}
  The statement of the corollary then follows from standard coupling arguments.  \endproof

\subsection{The peeling process}\label{sec:peeling process}
Let $\UIHPQ$ be a simple boundary UIHPQ and let us reveal the quadrangular face that contains the root edge (an operation which we may call \emph{peeling} the root edge). The revealed face can separate the map into one, two or three regions, only one of which is infinite according to the cases listed below. Conditionally on each of these cases, such regions are independent from each other, the infinite one (in light grey in the following figures) always being a copy of a UIHPQ while the finite ones (in dark grey) are Boltzmann quadrangulations of appropriate perimeter. We shall call edges of the revealed face that belong to the infinite region \emph{exposed} edges; edges of the boundary of $\UIHPQ$ that belong to the finite regions will be said to have been \emph{swallowed}, and we will distinguish edges that are swallowed \emph{to the left} and \emph{to the right} according to whether they lie to the left or right of the root edge being peeled (see Figure~\ref{exposed-swallowed}). We distinguish the following cases:
\begin{itemize}
\item[$\mathsf{C}$] Firstly, the revealed face may have exactly two vertices on the boundary of $\UIHPQ$. In this case we say that the form of the quadrangle revealed is $\mathsf{C}$ (for center). 
  \begin{center}
    \includegraphics[width=45mm]{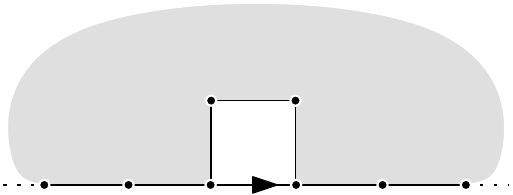}
  \end{center}
  
\item[$\mathsf{R}_\ell, \mathsf{L}_\ell$] The revealed face can also have three of its vertices lying on the
  boundary of $\UIHPQ$ and one in the interior, thus separating the map into a region with a finite boundary and one with an infinite boundary.  We have two sub-cases, depending on whether the third vertex lies on the left (case $ \mathsf{L}_{\ell}$) or on the right (case $ \mathsf{R}_{\ell}$) of the root edge. Suppose for example that the third vertex lies
  on the left of the root edge; the fourth vertex of the quadrangle may lie on the boundary of the finite region or of the infinite region.  Since all quadrangulations are
  bipartite, this is determined by the parity of the number $\ell$ of \emph{swallowed} edges (see the figure below for the case of $ \mathsf{L}_{\ell}$).

  \begin{center}
    \includegraphics[width=11cm]{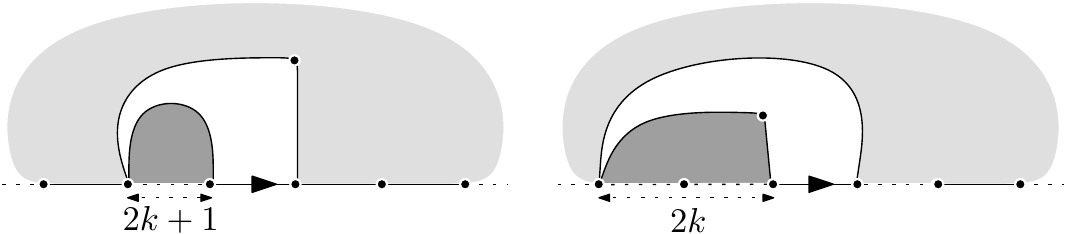}
  \end{center}

\item[$\mathsf{L}_{\ell_{1},\ell_{2}}$, $\mathsf{C}_{\ell_1,\ell_2}$, $\mathsf{R}_{\ell_{1},\ell_{2}}$] The last case to consider is when the revealed quadrangle has all of its
  four vertices on the boundary.  In this case the revealed face
  separates from infinity two segments of length $\ell_1$
  and $\ell_2$ along the boundary, as depicted in the figure below. This could happen in three ways, as $0$,
  $1$, or $2$ vertices could lie to the right of the root edge (see
  \cref{fig:peelhalfquad3}) and the corresponding subcases are denoted by $ \mathsf{L}_{\ell_{1},\ell_{2}}$, $  \mathsf{C}_{\ell_{1},\ell_{2}}$ and $ \mathsf{R}_{\ell_{1},\ell_{2}}$.  Notice that the numbers $\ell_1=2k_{1}+1$ and $\ell_2=2k_{2}+1$
  must both be odd. 

  \begin{figure}[h]
    \begin{center}
      \includegraphics[width=12cm]{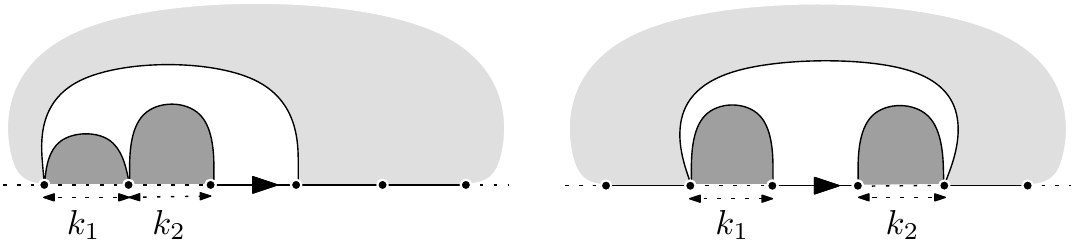}
      \caption{Cases $ \mathsf{L}_{\ell_{1},\ell_{2}}$ and $\mathsf{C}_{\ell_{1},\ell_{2}}$.
      \label{fig:peelhalfquad3}}
    \end{center}
  \end{figure}
\end{itemize}

\begin{figure}[!h]
  \begin{center}
    \includegraphics[width=\textwidth]{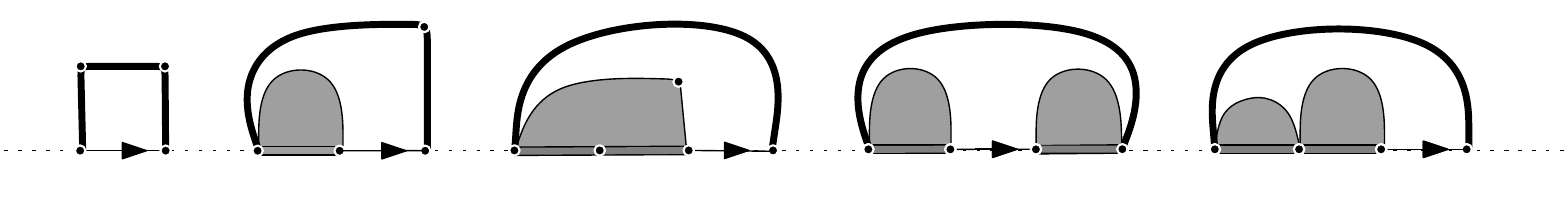}
    \caption{The exposed edges are in fat black lines and the swallowed
      ones are in fat gray lines (the remaining cases are symmetric).}
    \label{exposed-swallowed}
  \end{center}
\end{figure}
The exact probabilities of these events can be computed explicitly (see \cite{ACpercopeel}) but we will only use the fact \cite[Section 2.3.2]{ACpercopeel} that, if $ \mathcal{E}$ and $ \mathcal{S}$ denote the number of edges respectively exposed and swallowed by a peeling step, 
 \begin{eqnarray} \label{eq:infopeeling} \mathbb{E}[ \mathcal{E}] = 2, \quad \mathbb{E}[  \mathcal{S}] = 1, \quad \mbox{ and }\quad  	\mathbb{P}(  \mathcal{S} = k) \sim C k^{-5/2}  \end{eqnarray} for some constant $C>0$ as $k \to \infty$.

By iterating the one-step peeling described above one can define a growth algorithm that discovers (a subset of) the simple boundary UIHPQ step by step. 

A \emph{peeling process} is a randomized algorithm that  explores $\UIHPQ$ by revealing at each step the face in the unexplored part adjacent to a given edge, together with any finite regions that it encloses; in order to choose the next edge to peel, one can use the submap of $\UIHPQ$ that has already been revealed and possibly another source of randomness as long as the choice remains independent of the unknown region (see \cite[Section 2.3.3]{ACpercopeel} for details). Under these assumptions the one-step peeling transitions and the invariance of $\UIHPQ$ under re-rooting along the boundary show that the peeling steps are i.i.d.,~see \cite[Proposition 4]{ACpercopeel}.

 Notice that this is definitely the case with the algorithm described in Section~\ref{algorithm}, which in fact consists of the one-step peeling described above, with the chosen edge at each step being simply the rightmost exposed edge of the most recently revealed face (until the algorithm stops).
 
 Furthermore, consider the number of \emph{exposed}, \emph{left swallowed} and \emph{right swallowed} edges at each peeling step; each of these quantities is a random variable whose law can be computed explicitly thanks to the probabilities of the various events listed above, by referring to Table~\ref{table}.
 
\begin{table}[h]\centering
 	$\begin{array}{|l|c|c|c|c|}
     \hline
     \mbox{Case} & \mbox{Exposed} & \mbox{left swallowed} & \mbox{right swallowed} & \Delta Y\\
     \hline
      \mathsf{C} & 3 & 0 & 0 & 2\\
      \mathsf{L}_{2k} &  1 & 2k & 0 &  -2k\\
      \mathsf{L}_{2k+1} & 2 & 2k+1 & 0 &-2k\\
      \mathsf{R}_{2k} &  1 & 0 & 2k &0\\
      \mathsf{R}_{2k+1} &  2 & 0 & 2k+1 &1\\
       \mathsf{L}_{k_{1},k_{2}}&  1 &  k_{1}+k_{2} & 0 & -k_{1}-k_{2}\\
       \mathsf{C}_{k_{1},k_{2}} & 1 &  k_{1} & k_{2} & -k_{1}\\
\mathsf{R}_{k_{1},k_{2}} & 1 & 0 & k_{1}+ k_{2} &  0\\\hline\end{array}$
\caption{\label{table}}
\end{table}
 
\subsection{Overshoot estimates}

We now can proceed with the proof of Lemma~\ref{lem:overshoot estimate}.

\begin{proof}[Proof of Lemma~\ref{lem:overshoot estimate}]
Consider the peeling process on $ \UIHPQ$ as defined in Section~\ref{algorithm}, run indefinitely (with $k= \infty$). Steps are numbered from $0$, and step $i$ reveals a face $f_i$ and outputs an oriented edge $e_{i+1}$ incident to $f_i$, with an endpoint $x_{\rho_{i}}$ on the boundary $(x_i)_{i\in\mathbb{Z}}$ of $\UIHPQ$; the edge $e_{i+1}$ is then peeled at step $i+1$; we denote by $\UIHPQ(i)$ the map obtained from $\UIHPQ$ after removing the hull of the faces discovered up to time $i$. We already know that $\UIHPQ(i)$ -- appropriately rooted -- is distributed as a UIHPQ. We consider the section $\gamma_{i}$ of the boundary of $\UIHPQ(i)$ lying  left of $e_{i+1}$. This is of course made of infinitely many edges, but $\gamma_{i}$ and $\gamma_{0}$ differ by only  finitely many edges. This makes possible to define for each $i\geq 0$ a quantity $Y_{i}$ which represents the algebraic variation of the ``length'' of $\gamma_{i}$ with respect to $\gamma_{0}$. The formal definition of $Y_{i}$ is given from $Y_{0}=0$ via  its variation $\Delta Y_{i} = Y_{i+1}-Y_{i}$ equal to the number of exposed edges minus the number of swallowed edges on the left  of the current point minus $1$ (see the above table).
Clearly the definition of $(Y)$ and the properties of the peeling process entail that $(Y)$ is a random walk with i.i.d.~increments whose law can be explicitly computed. In particular, $ \mathbb{E}[\Delta Y] = \mathbb{E}[ \mathcal{E}] - 1 -  \mathbb{E}[ \mathcal{S}]/2 = \frac{1}{2}$ with the notation of \eqref{eq:infopeeling}. Since $(Y)$ has a positive drift we can define the overall infimum
$$ \inf_{i \geq 0} Y_{i} > -\infty.$$
An easy geometric argument then shows that the variable $X$ we are after is just $\inf_{i \geq 0} Y_{i}$. The tail of the overall infimum of the transient random walk $(Y)$ can be estimated from the tail of $ \Delta Y$ (which is given by \eqref{eq:infopeeling}) using \cite[Theorem 2]{Ver77}. It follows that for some $C'>0$ $$\P(X\leq -n)=\P\left(\inf_{i\geq 0} Y_i\leq -n\right)\leq C'n^{-1/2}, \quad \mbox{ for all }n \geq 1.$$ \end{proof}

\subsection{Building the final fences in $\Q_\infty^\rightarrow$ and $\Q_\infty^{\leftrightarrow}$}

Now, in order to conclude our proof of \cref{thm:diffusive}, we need a little tweaking of the fence-building algorithm, so as to have fence endpoints coincide in the case of $\Q_\infty^\rightarrow$ and $\Q_\infty^{\leftrightarrow}$.

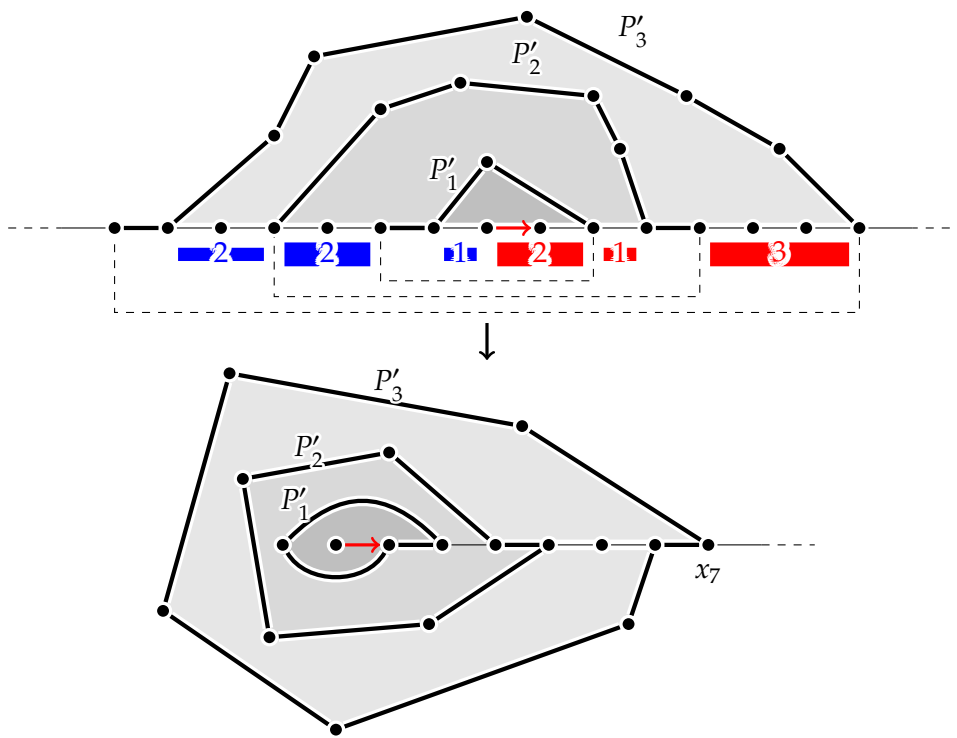
\begin{figure}[h!!]
	\centering
	\begin{tikzpicture}[cont/.style={ultra thick}, scale=.7]
	\begin{pgfonlayer}{nodelayer}
		\node [style=real] (0) at (0, -0) {};
		\node [style=real] (1) at (1, -0) {};
		\node [style=real] (2) at (2, -0) {};
		\node [style=real] (3) at (3, -0) {};
		\node [style=real] (4) at (4, -0) {};
		\node [style=real] (5) at (5, -0) {};
		\node [style=real] (6) at (6, -0) {};
		\node [style=real] (7) at (-1, -0) {};
		\node [style=real] (8) at (-2, -0) {};
		\node [style=real] (9) at (-3, -0) {};
		\node [style=real] (10) at (-4, -0) {};
		\node [style=real] (11) at (-5, -0) {};
		\node [style=real] (12) at (-6, -0) {};
		\node [style=real] (13) at (-2, 2.25) {};
		\node [style=real] (14) at (2, 2.5) {};
		\node [style=real] (15) at (-0.5, 2.75) {};
		\node [style=real] (16) at (2.5, 1.5) {};
		\node [style=real] (17) at (0, 1.25) {};
		\node [style=real] (18) at (-4, 1.75) {};
		\node [style=real] (19) at (-3.25, 3.25) {};
		\node [style=real] (20) at (0.75, 4) {};
		\node [style=real] (21) at (3.75, 2.5) {};
		\node [style=real] (22) at (5.5, 1.5) {};
		\node [style=real] (23) at (7, -0) {};
		\node [style=real] (24) at (-7, -0) {};
		\node (25) at (-1, -0.5) {};
		\node (26) at (0, -0.5) {};
		\node (27) at (2, -0.5) {};
		\node (28) at (-4, -0.5) {};
		\node (29) at (-2, -0.5) {};
		\node (30) at (3, -0.5) {};
		\node (31) at (-6, -0.5) {};
		\node (32) at (4, -0.5) {};
		\node (33) at (7, -0.5) {};
		\node (34) at (7, -1) {};
		\node (35) at (4, -1) {};
		\node (36) at (2, -1) {};
		\node (37) at (0, -1) {};
		\node (38) at (-2, -1) {};
		\node [] (39) at (-4, -1) {};
		\node [] (40) at (-7, -1) {};
		\contourlength{1pt}
				\node (41) at (-0.8, 1) {\contour{white}{$P'_1$}};
		\node (42) at (0.75, 3.2) {\contour{white}{$P'_2$}};
		\node (43) at (2.75, 3.7) {\contour{white}{$P'_3$}};
	\end{pgfonlayer}
	\begin{pgfonlayer}{edgelayer}
	\contourlength{2pt}
		\fill[gray!20] (18.center)--(19.center)--(20.center)--(21.center)--(22.center)--(23.center)--(12.center)--cycle;
	\fill[gray!30] (10.center)--(13.center)--(15.center)--(14.center)--(16.center)--(3.center)--cycle;
	\fill[gray!50] (8.center)--(7.center)--(17.center)--(2.center)--cycle;
		\draw[b] (8,0) to (-8,0);
		\draw [b] (8) to (7);
		\draw [style=b] (7) to (17);
		\draw [style=b] (17) to (2);
		\draw [style=b] (8) to (7);
		\draw [style=b] (10) to (13);
		\draw [style=b] (13) to (15);
		\draw [style=b] (15) to (14);
		\draw [style=b] (14) to (16);
		\draw [style=b] (16) to (3);
		\draw [style=b] (4) to (3);
		\draw [style=b] (18) to (19);
		\draw [style=b] (19) to (20);
		\draw [style=b] (20) to (21);
		\draw [style=b] (18) to (12);
		\draw [style=b] (21) to (22);
		\draw [style=b] (22) to (23);
		\draw [style=b] (24) to (12);
		
		\draw [style=cont] (8) to (7);
		\draw [style=cont] (7) to (17);
		\draw [style=cont] (17) to (2);
		\draw [style=cont] (8) to (7);
		\draw [style=cont] (10) to (13);
		\draw [style=cont] (13) to (15);
		\draw [style=cont] (15) to (14);
		\draw [style=cont] (14) to (16);
		\draw [style=cont] (16) to (3);
		\draw [style=cont] (4) to (3);
		\draw [style=cont] (18) to (19);
		\draw [style=cont] (19) to (20);
		\draw [style=cont] (20) to (21);
		\draw [style=cont] (18) to (12);
		\draw [style=cont] (21) to (22);
		\draw [style=cont] (22) to (23);
		\draw [style=cont] (24) to (12);
		\draw [line width=9pt, red] (33) -- (32) node [midway] {\contour{white}{$3$}};
		\draw [line width=5pt, red] (30) -- (27) node [midway] {\contour{white}{$1$}};
		\draw [line width=9pt, red] (27) -- (26) node [midway] {\contour{white}{$2$}};
		\draw [line width=5pt, blue] (26) -- (25) node [midway] {\contour{white}{$1$}};
		\draw [line width=9pt, blue] (29) -- (28) node [midway] {\contour{white}{$2$}};
		\draw [line width=5pt, blue] (28) -- (31) node [midway] {\contour{white}{$2$}};
		\draw[dashed] (2)--(2,-1)--(-2,-1)--(8);
		\draw[dashed] (4)--(4,-1.3)--(-4,-1.3)--(10);
		\draw[dashed] (23)--(7,-1.6)--(-7,-1.6)--(24);
		\draw[] (8,0) -- (-8,0);
		\draw[root] (0)--(1);
		\draw[dashed] (-9,0)--(-8,0) (9,0)--(8,0);
		\draw[->, very thick] (0,-1.8)--(0,-2.5);
	\end{pgfonlayer}
\end{tikzpicture}\\
\begin{tikzpicture}[cont/.style={ultra thick}, scale=.7]
	\begin{pgfonlayer}{nodelayer}
		\node [style=real] (0) at (-3, -0) {};
		\node [style=real] (1) at (-2, -0) {};
		\node [style=real] (2) at (-1, -0) {};
		\node [style=real] (3) at (0, -0) {};
		\node [style=real] (4) at (1, -0) {};
		\node [style=real] (5) at (2, -0) {};
		\node [style=real] (6) at (3, -0) {};
		\node [style=real, label=below:$x_7$] (7) at (4, -0) {};
		\node [style=real] (8) at (-4, -0) {};
		\node [style=real] (9) at (-2, 1.75) {};
		\node [style=real] (10) at (-4.75, 1.25) {};
		\node [style=real] (11) at (-4.25, -1.75) {};
		\node [style=real] (12) at (-1.25, -1.5) {};
		\node [style=real] (13) at (0.5, 2.25) {};
		\node [style=real] (14) at (-5, 3.25) {};
		\node [style=real] (15) at (-6.25, -1.25) {};
		\node [style=real] (16) at (-3, -3.5) {};
		\node [style=real] (17) at (2.5, -1.5) {};
		\contourlength{1pt}
		\node [style=none] (18) at (-3.75, 0.75) {\contour{white}{$P'_1$}};
		\node [style=none] (19) at (-3.5, 1.75) {\contour{white}{$P'_2$}};
		\node [style=none] (20) at (-2, 3) {\contour{white}{$P'_3$}};
	\end{pgfonlayer}
	\begin{pgfonlayer}{edgelayer}
	\fill[gray!20] (7.center)--(13.center)--(14.center)--(15.center)--(16.center)--(17.center)--(6.center)--cycle;
	\fill[gray!30] (3.center)--(9.center)--(10.center)--(11.center)--(12.center)--(4.center)--(3.center);
	\fill[gray!50] [bend right=45, looseness=1.25] (2.center) to (8.center) [bend right=60, looseness=1.00] to (1.center)--(2.center);
	\draw[b] (0,0)--(6,0);
	\draw (0) to (5,0);
	\draw[dashed] (5,0)--(6,0);
	
		\draw [style=b, bend right=45, looseness=1.25] (2) to (8);
		\draw [style=b, bend right=60, looseness=1.00] (8) to (1);
		\draw [style=b] (1) to (2);
		\draw [style=b] (3) to (9);
		\draw [style=b] (9) to (10);
		\draw [style=b] (10) to (11);
		\draw [style=b] (11) to (12);
		\draw [style=b] (12) to (4);
		\draw [style=b] (3) to (4);
		\draw [style=b] (7) to (13);
		\draw [style=b] (13) to (14);
		\draw [style=b] (14) to (15);
		\draw [style=b] (15) to (16);
				\draw [style=b] (16) to (17);
		\draw [style=b] (17) to (6);
		\draw [style=b] (6) to (7);
		
	\draw[root] (0)--(1);
		\draw [style=cont, bend right=45, looseness=1.25] (2) to (8);
		\draw [style=cont, bend right=60, looseness=1.00] (8) to (1);
		\draw [style=cont] (1) to (2);
		\draw [style=cont] (3) to (9);
		\draw [style=cont] (9) to (10);
		\draw [style=cont] (10) to (11);
		\draw [style=cont] (11) to (12);
		\draw [style=cont] (12) to (4);
		\draw [style=cont] (3) to (4);
		\draw [style=cont] (7) to (13);
		\draw [style=cont] (13) to (14);
		\draw [style=cont] (14) to (15);
		\draw [style=cont] (15) to (16);
				\draw [style=cont] (16) to (17);
		\draw [style=cont] (17) to (6);
		\draw [style=cont] (6) to (7);	
	\end{pgfonlayer}
\end{tikzpicture}
	\caption{\label{fig:fences one-ended}The fences $(P_i)_{i\geq 1}$ are the subpaths of the fences $(P_i')_{i\geq 1}$ above obtained by disregarding boundary edges; we have $X_1^+=2$, $X_2^+=1$, $X_3^+=3$, $X_1^-=1$, $X_2^-=2$, $X_3^-=2$. The overshoots of $P_i'$ are both equal to $\max\{X_i^+,X_i^-\}$ (hence 2, 2, 3 for $i=1,2,3$), and $r(i)$ (see the folded version) is $X_1+\ldots+X_i$ (e.g.~we have $r(3)=7$). }
\end{figure}

Consider first $\Q_\infty^\rightarrow$, built from $\UIHPQ$ by glueing the boundary onto itself as described in the Introduction. Start by setting $k=1$, reroot $\UIHPQ$ at $e_0=(x_{-1},x_0)$, and build a fence $P_1$ avoiding $x_0$ whose endpoints are $x_{-\ell_1}$ and $x_{r_1}$ as described in Section~\ref{algorithm}; let $X^+_1=r_1$ and $X^-_1=\ell_1$ be its right and left overshoots respectively. Before building $P_{2}$ we set $P'_1$ to be $P_1$ with the addition of the portion of the boundary between $x_{-r_1}$ and $x_{-\ell_1}$ if $r_1>\ell_1$, or between $x_{r_1}$ and $x_{\ell_1}$ if $r_1<\ell_1$. Hence the path $P'_{1}$ connects symmetric vertices on the boundary. We then consider the map $\UIHPQ(1)$ obtained by erasing the region of $\UIHPQ$ lying below $P'_{1}$ (or, equivalently, below $P_{1}$), rooted at the first boundary edge on the left of $P'_{1}$. The map $\UIHPQ(1)$ is a copy of the UIHPQ independent of the part erased. We then build $P_2$ by running the algorithm from Section \ref{algorithm} inside $ \UIHPQ(1)$, setting $k$ to be the number of edges of $P'_{1}$ plus $1$. We then extend $P_{2}$ into $P_{2}'$ as above to make it connect mirror vertices. Iterating the process we build disjoint paths $P'_{i}$ connecting mirror vertices $x_{-r(i)}$ to $x_{r(i)}$.

Consider now the sequence $(P_i')_{i\geq 1}$ as seen in $\Q_\infty^\rightarrow$ (Figure~\ref{fig:fences one-ended}): the fences now form nested disjoint loops. By planarity if $j \geq r(i)$ then the $j$-th point on the distinguished self-avoiding walk $ \mathsf{P}^{\rightarrow}$ is at distance at least $i$ from the origin in $\Q_{\infty}^{\rightarrow}$. Our lower bound \eqref{prop:lower bound} (LHS) is then implied if we can show that   \begin{eqnarray} \label{eq:goal1} r(n) \preceq n^{2}.  \end{eqnarray}
If we denote by $X_{i}^{-}$ and $X^{+}_{i}$ the left and right overshoots of $P_{i}$, then 
$$r(n) = \sum_{j=1}^{n} \max( X_{j}^{+}, X_{j}^{-}).$$
Notice that the overshoots $X^+_1, X^-_1, \ldots,X^+_i, X^-_i$ are \emph{not} independent nor identically distributed; however, using Corollary \ref{cor:coupling} we can couple those overshoots with a sequence of i.i.d.~random variables $(O_{i})_{i \geq 1}$ having the law prescribed in Corollary \ref{cor:coupling} so that  $\max( X_{j}^{+}, X_{j}^{-}) \leq O_{j}$ for all $j \geq 1$. Hence in this coupling we have $r(n) \leq O_{1} + \cdots + O_{n}$. Standard estimates for i.i.d.~variables with heavy tails then show that $O_{1} + \cdots + O_{n} \preceq n^{2}$ which proves our goal \eqref{eq:goal1} which -- combined with Corollary~\ref{cor:old bound} -- proves the first part of Theorem~\ref{thm:diffusive}.

\bigskip

The case of $\Q_\infty^\leftrightarrow$ is essentially the same. Supposing $\UIHPQ$ and $\UIHPQ'$ are the two independent ``halves'' sharing the distinguished path $(\mathsf{P}^{\leftrightarrow}_i)_{i\in \mathbb{Z}}$ on the vertices $(x_i)_{i\in\mathbb{Z}}$, one proceeds to build a fence $P_1$ within $\UIHPQ$ that avoids $x_0$, then a fence $Q_1$ in $\UIHPQ'$ that does the same; one then builds $P_1'$ and $Q_1'$ by adding boundary edges to $P_1$ and $Q_1$ so that the endpoints of $P_1'$ coincide with those of $Q_1'$ (see Figure~\ref{fig:fences1}), and thus the right overshoot of $P_1'$ is the maximum between the right overshoot of $P_1$ and the left overshoot of $Q_1$. Iterating such a construction and applying the very same estimates as for the case of $\Q_\infty^\rightarrow$ finally shows Theorem~\ref{thm:diffusive}.

     \section{Volume estimates and singularity}
     \subsection{Proof of Corollary \ref{cor:volumegrowth}}
\proof[Proof of Corollary \ref{cor:volumegrowth}]  We treat the case of $\Q_{\infty}^\rightarrow$, the argument being similar in the case of $\Q_{\infty}^\leftrightarrow$. We assume that $\Q_{\infty}^\rightarrow$ has been constructed from $\UIHPQ$ by folding its boundary onto itself. Since the surgery operation performed to create $\Q_{\infty}^\rightarrow$ from $\UIHPQ$ can only decrease distances we have $\Ball_{r}(\UIHPQ) \subseteq  \Ball_{r}(\Q_{\infty}^\rightarrow)$ and it follows from the estimates on the volume growth in the UIHPQ \cite[Proposition 6.2]{CCuihpq} that 
      $$ \#  \Ball_{r}(\Q_{\infty}^\rightarrow) \succeq r^4.$$
      Let us now turn to the upper bound.      Consider the portion of the folded boundary of $\UIHPQ$ that is inside $ \Ball_{r}(\Q_{\infty}^\rightarrow)$. By Theorem \ref{thm:diffusive} the length $L_{r}$ of this portion is $\preceq r^2$. Hence it is immediate that $ \Ball_{r}(\Q_{\infty}^\rightarrow)$ is contained in the set of all faces having one vertex at distance at most $r$ from the boundary $\llbracket -L_{r}, L_{r} \rrbracket$ in $\UIHPQ$. Let us denote by $M_{r}$ the maximal distance in $\UIHPQ$ to the origin of this piece of boundary. By the above argument we have 
      $$  \Ball_{r}(\Q_{\infty}^\rightarrow) \subseteq  \Ball_{M_{r}+r}(\UIHPQ).$$
But applying the distance estimates along the boundary inside the UIHPQ (\cite[Proposition 6.1]{CCuihpq}) one deduces that $M_{r} \approx  \sqrt{L_{r}} \approx r$ and using volume estimates once more we get $\# \Ball_{M_{r}+r}(\UIHPQ) \approx r^4$, which completes the proof of the upper bound and hence of Corollary~\ref{cor:volumegrowth}. \endproof

  \begin{figure}
\centering
\begin{tikzpicture}[every path/.style={}]
\node (P1) at (-0.3,0.9) {\contour{white}{\color{red}$P_1'$}};
\node (P2) at (-0.7,1.5) {\contour{white}{\color{red}$P_2'$}};
\node (P1) at (-0.3,2.5) {\contour{white}{\color{red}$P_3'$}};
\node (P1) at (-0.1,-1) {\contour{white}{\color{blue}$Q_1'$}};
\node (P2) at (-1.4,-1.5) {\contour{white}{\color{blue}$Q_2'$}};
\node (P1) at (-0.3,-2.5) {\contour{white}{\color{blue}$Q_3'$}};
	\begin{pgfonlayer}{nodelayer}
		\node [style=real] (0) at (0, -0) {};
		\node [style=real] (1) at (1, -0) {};
		\node [style=real] (2) at (2, -0) {};
		\node [style=real] (3) at (3, -0) {};
		\node [style=real] (4) at (4, -0) {};
		\node [style=real, label=below:$x_5$] (5) at (5, -0) {};
		\node [style=real] (6) at (-1, -0) {};
		\node [style=real] (7) at (-2, -0) {};
		\node [style=real] (8) at (-3, -0) {};
		\node [style=real] (9) at (-4, -0) {};
		\node [style=real] (10) at (-5, -0) {};
		\node [style=real] (11) at (-6, -0) {};
		\node [style=real] (12) at (-2.5, 1.5) {};
		\node [style=real] (13) at (1.25, 1) {};
		\node [style=real] (14) at (2, 2.5) {};
		\node [style=real] (15) at (0.5, 2.75) {};
		\node [style=real] (16) at (3.5, 1.25) {};
		\node [style=real] (17) at (3, 1.75) {};
		\node [style=real] (18) at (2.75, 3) {};
		\node [style=real] (19) at (4.75, 0.5) {};
		\node [style=real] (20) at (3.5, 0.5) {};
		\node [style=real] (21) at (2.25, 0.75) {};
		\node [style=real] (22) at (0, 1.5) {};
		\node [style=real] (23) at (1.25, 1.5) {};
		\node [style=real] (24) at (0, 0.5) {};
		\node [style=real] (25) at (-0.25, 2) {};
		\node [style=real] (26) at (-3.5, 1) {};
		\node [style=real] (27) at (-4.75, 1.25) {};
		\node [style=real] (28) at (-5, 0.25) {};
		\node [style=real] (29) at (-1.25, 2.5) {};
		\node [style=real] (30) at (-0.75, 0.5) {};
		\node [style=real] (31) at (-4.25, -1) {};
		\node [style=real] (32) at (-2.5, -1.25) {};
		\node [style=real] (33) at (-0.7, -1.79) {};
		\node [style=real] (34) at (-0.25, -0.5) {};
		\node [style=real] (35) at (0.5, -0.5) {};
		\node [style=real] (36) at (-0.25, -2) {};
		\node [style=real] (37) at (-1.75, -0.75) {};
		\node [style=real] (38) at (-0.75, -1.5) {};
		\node [style=real] (39) at (2, -1.25) {};
		\node [style=real] (40) at (3, -0.75) {};
		\node [style=real] (41) at (4, -1.25) {};
		\node [style=real] (42) at (3.75, -1.75) {};
		\node [style=real] (43) at (-2.25, -2.75) {};
		\node [style=real] (44) at (1.5, -3) {};
		\node [style=real] (45) at (1.75, -2.25) {};
		\node [style=real] (46) at (0.5, -1.5) {};
		\node [style=real] (47) at (-0.75, -0.75) {};
		\node [style=real] (48) at (-3.5, -0.25) {};
		\node [style=real] (49) at (-3.25, -2) {};
		\node [style=real] (50) at (-5.25, -1.5) {};
	\end{pgfonlayer}
	\begin{pgfonlayer}{edgelayer}
		\draw[ultra thick] (11) to (10);
		\draw [style=fence] (10) to (9);
		\draw [bend right=75, looseness=1.25] (9) to (8);
		\draw[ultra thick] (7) to (6);
		\draw[ultra thick] (6) to (0);
		\draw[ultra thick] (1) to (2);
		\draw[ultra thick] (2) to (3);
		\draw [style=fence] (3) to (4);
		\draw[ultra thick] (4) to (5);
		\draw [bend left, looseness=1.25] (30) to (1);
		\draw (24) to (1);
		\draw (13) to (1);
		\draw (30) to (23);
		\draw [bend left=45, looseness=1.25] (25) to (21);
		\draw (25) to (8);
		\draw [bend left=45, looseness=1.25] (22) to (21);
		\draw (8) to (30);
		\draw [style=fence] (9) to (26);
		\draw [style=fence] (26) to (12);
		\draw (12) to (8);
		\draw [style=fence] (12) to (29);
		\draw [style=fence] (29) to (25);
		\draw (21) to (16);
		\draw (16) to (19);
		\draw [bend right=60, looseness=1.50] (3) to (16);
		\draw (4) to (19);
		\draw (3) to (20);
		\draw [style=fence] (16) to (3);
		\draw [bend left=60, looseness=1.25] (16) to (5);
		\draw (14) to (21);
		\draw [style=fence] (14) to (17);
		\draw [style=fence] (17) to (16);
		\draw [style=fence] (25) to (15);
		\draw [style=fence] (15) to (14);
		\draw [style=fence] (6) to (30);
		\draw [style=fence] (30) to (1);
		\draw [style=fence] (8) to (22);
		\draw [style=fence] (22) to (23);
		\draw [style=fence] (23) to (13);
		\draw [style=fence] (13) to (21);
		\draw [style=fence] (21) to (2);
				\tikzset{fence/.append style={blue}};
						\draw [style=fence] (7) to (8);
		\draw [style=fence] (1) to (35);
		\draw [style=fence] (35) to (34);
		\draw [style=fence] (34) to (47);
		\draw [style=fence] (47) to (6);
		\draw (0) to (34);
		\draw [bend right=45, looseness=1.00] (34) to (2);
		\draw [style=fence, bend left=15, looseness=1.00] (2) to (46);
		\draw [style=fence] (46) to (38);
		\draw (38) to (34);
		\draw [style=fence] (38) to (37);
		\draw (37) to (47);
		\draw [style=fence] (37) to (7);
		\draw [style=fence] (4) to (40);
		\draw (2) to (40);
		\draw (40) to (39);
		\draw (46) to (39);
		\draw (46) to (45);
		\draw [style=fence] (45) to (40);
		\draw [bend right=75, looseness=1.25] (38) to (36);
		\draw [style=fence] (36) to (45);
		\draw (33) to (36);
		\draw (38) to (36);
		\draw (8) to (32);
		\draw (32) to (37);
		\draw [style=fence, bend right=45, looseness=1.00] (32) to (36);
		\draw [style=fence] (32) to (10);
		\draw (48) to (8);
		\draw[ultra thick] (9) to (8);
		\draw (41) to (5);
		\draw (41) to (40);
		\draw (45) to (42);
		\draw (42) to (41);
		\draw (10) to (31);
		\draw (11) to (50);
		\draw (50) to (31);
		\draw (11) to (28);
		\draw (28) to (9);
		\draw (28) to (27);
		\draw (27) to (26);
		\draw (31) to (49);
		\draw (49) to (32);
		\draw (27) to (29);
		\draw (15) to (18);
		\draw (18) to (17);
		\draw (49) to (43);
		\draw [bend right=15, looseness=1.00] (43) to (36);
		\draw (43) to (44);
		\draw (44) to (45);
		\draw[ultra thick, ->] (0) to (1);
	\end{pgfonlayer}
\end{tikzpicture}
\caption{\label{fig:fences1} Illustration of the construction of the fences in the case of $\Q_{\infty}^{\leftrightarrow}$.}
\end{figure}
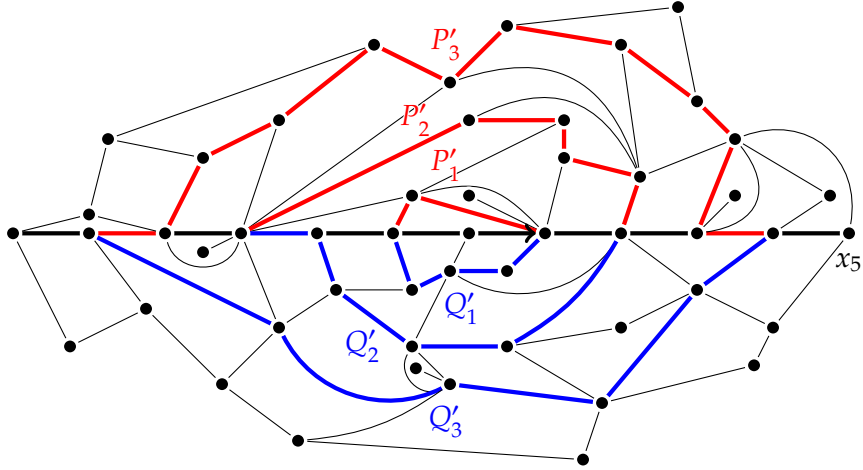

      \subsection{Proof of Theorem \ref{thm:singular}}

We first recall the scaling limit results that we will need. The work of Le Gall \& M\'enard on the UIPQ \cite{LGM10,LGM10erratum}  as well as our previous work on the UIHPQ \cite{CCuihpq} show that there are two random variables $ \mathcal{V}_{p}$ and $ \mathcal{V}_{h}$ such that 
 \begin{eqnarray*} r^{-4} \# \Ball_{r}(\Q_{\infty})  \to \mathcal{V}_{p} \quad \mbox{ and } \quad r^{-4} \# \Ball_{r}(\UIHPQ)  \to \mathcal{V}_{h},  \end{eqnarray*}
  in distribution as $r \to \infty$. The expectations of such continuous random variables have been computed and in particular $\mathbb{E}[\mathcal{V}_{h}]=\frac{7}{9} \mathbb{E}[\mathcal{V}_{p}]$. We will only use a trivial consequence of these calculations: if $ \mathcal{V}_{h}'$ is an (independent) copy of $ \mathcal{V}_{h}$ since $ \mathbb{E}[  \mathcal{V}_{h} +  \mathcal{V}'_{h}] >  \mathbb{E} [\mathcal{V}_{p}]$ one can find $\alpha>0$ such that 
  \begin{eqnarray}  \label{eq:choixx0}\mathbb{P}(\mathcal{V}_{h} +  \mathcal{V}_{h}'> \alpha) > \mathbb{P}(\mathcal{V}_{p} > \alpha),  \end{eqnarray}
we can and will furthermore assume that $\alpha$ is not an atom for the law of $ \mathcal{V}_{ p}$ nor for that of $ \mathcal{V}_{h} + \mathcal{V}_{h}'$ (this is possible since there are at most a countable number of atoms for each law).
  
  By construction, the volume of the ball of radius $r$ inside   $\Q_{\infty}^{\leftrightarrow}$ is at least $\# \widetilde{\Ball}_{r}(\UIHPQ)+\#\widetilde{ \Ball}_{r}(\UIHPQ')$, where $\UIHPQ$ and $\UIHPQ'$ are the two independent copies of the UIHPQ with a simple boundary used to construct $\Q_{\infty}^{\leftrightarrow}$, and $ \widetilde{\Ball}_{r}(\UIHPQ)$ is the set of inner vertices (not on the boundary) which are at distance less than $r$ from the origin of the map (since we shall use estimates for $r^{-4} \# \Ball_{r}(\UIHPQ)$, the number of vertices on the boundary which is of order $r^{2}$ will turn out to be completely irrelevant, see \cite[Section 6]{CCuihpq}). In particular from the scaling limit results recalled in the beginning of this section we deduce that for the $\alpha$ chosen in \eqref{eq:choixx0}  we have
\begin{eqnarray} \label{eq:laws converge} 
\lim_{r \to \infty }\mathbb{P}(r^{-4}\# \Ball_{r}(\Q_{\infty}) > \alpha)  &=&  \mathbb{P}( \mathcal{V}_{p} >\alpha ), \\
 \liminf_{r \to \infty }\mathbb{P}(r^{-4}\# \Ball_{r}(\Q_{\infty}^{\leftrightarrow}) > \alpha)  &\geq& \liminf_{r \to \infty }\mathbb{P}(r^{-4}(\# \widetilde{\Ball}_{r}(\UIHPQ)+\#\widetilde{ \Ball}_{r}(\UIHPQ ')) > \alpha)\nonumber \\ &=&  \mathbb{P}( \mathcal{V}_{h}+\mathcal{V}_{h}' >\alpha ).\label{eq:laws converge2}   \end{eqnarray}

Given a quadrangulation of the plane $ \mathfrak{q}$, set $ \mathcal{X}_{r}( \mathfrak{q})=1$ if  $\# \Ball_{r}( \mathfrak{q}) > \alpha r^4$ and $\mathcal{X}_{r}(q)=0$ otherwise. Similarly if $ \mathfrak{h}_{1}$ and $\mathfrak{h}_{2}$ are two quadrangulations of the half-plane then we set $ \mathcal{Y}_{r}( \mathfrak{h}_{1}, \mathfrak{h}_{2}) =1$ if $$\# \widetilde{\Ball}_{r}( \mathfrak{h}_{1}) + \# \widetilde{\Ball}_{r}( \mathfrak{h}_{2}) > \alpha r^4.$$ Clearly with this notation we have $ \mathcal{X}_{r}( \Q_{\infty}^{\leftrightarrow}) \geq \mathcal{Y}_{r}( \UIHPQ, \UIHPQ')$.  The singularity result follows from an evaluation of the random variables $ \mathcal{X}_{r}(\Q_{\infty}^{\leftrightarrow})$ and $\mathcal{X}_{r}(\Q_{\infty})$ at different scales $0 \ll r_{1} \ll r_{2} \ll r_{3} \ll \cdots$ chosen in such a way that $ \mathcal{X}_{r_{i}}(\Q_{\infty})$ is roughly independent of $ \mathcal{X}_{r_{j}}(\Q_{\infty})$ for $i\neq j$, so that one may invoke a law of large numbers. To make this precise we first state an independence lemma which we prove at the end of the paper.
\begin{lemma}[Independance of scales]  \label{lem:scales}For any $r \geq 0$ and any $ \varepsilon >0$ there exists $R \geq r$ such that for any $s \geq R$ we have 
$$ \mathrm{Cov}\big( \mathcal{Y}_{r}(\UIHPQ,\UIHPQ') ;  \mathcal{Y}_{s}( \UIHPQ, \UIHPQ')\big) \leq \varepsilon,$$
$$ \mathrm{Cov}\big( \mathcal{X}_{r}(\Q_{\infty}) ;  \mathcal{X}_{s}( \Q_{\infty})\big) \leq \varepsilon.$$

\end{lemma}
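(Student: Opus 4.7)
The plan is to use the spatial Markov property to decouple the two scales: we condition on the hull of radius $r$ around the origin (which fully determines $\mathcal{X}_r$ or $\mathcal{Y}_r$), and show that at scale $s \gg r$ the conditional law of $\mathcal{X}_s$ or $\mathcal{Y}_s$ barely depends on the shape of that hull. I present the argument for $\mathcal{X}_r(\Q_\infty)$; the case of $\mathcal{Y}_r(\UIHPQ,\UIHPQ')$ is analogous, applied independently to each of the two UIHPQ halves.

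Since $\mathcal{X}_r$ is a measurable function of $H_r := \Ball_r^\bullet(\Q_\infty)$, I write
$$ \mathrm{Cov}\big(\mathcal{X}_r;\mathcal{X}_s\big) = \E\big[\mathcal{X}_r\big(\P(\mathcal{X}_s = 1 \mid H_r) - \P(\mathcal{X}_s = 1)\big)\big], $$
and reduce matters to showing $\P(\mathcal{X}_s=1\mid H_r) \to \lim_{s}\P(\mathcal{X}_s=1)$ almost surely as $s\to\infty$. By the spatial Markov property (see \cite{Ang03,CMboundary,CLGpeeling}), conditionally on $H_r=\mathfrak{a}$ with outer perimeter $2q$, the complement $\Q_\infty\setminus\mathfrak{a}$ is distributed as a UIPQ $\Q_{\infty,q}$ of the $2q$-gon, glued to $\mathfrak{a}$ along the outer boundary and independent of $\mathfrak{a}$. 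Since $\mathfrak{a}$ is a.s.\ finite, for $s$ greater than its diameter one has $\mathfrak{a}\subset\Ball_s(\Q_\infty)$, and the vertices of $\Ball_s(\Q_\infty)\setminus\mathfrak{a}$ are sandwiched between $\Ball_{s-O(1)}(\Q_{\infty,q})$ and $\Ball_{s}(\Q_{\infty,q})$ (rooted on the outer boundary of $\mathfrak{a}$). In particular,
$$ s^{-4}\#\Ball_s(\Q_\infty) - s^{-4}\#\Ball_s(\Q_{\infty,q}) \xrightarrow[s\to\infty]{} 0 $$
conditionally on $H_r=\mathfrak{a}$, by tightness of the rescaled volume.

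The key analytic input is that for each fixed $q\geq 1$,
$$ s^{-4}\#\Ball_s(\Q_{\infty,q}) \xrightarrow[s\to\infty]{(d)} \mathcal{V}_p, $$
i.e.\ with the \emph{same} limiting law as for $\Q_\infty=\Q_{\infty,1}$. Heuristically, after rescaling distances by $s^{-1}$ the $2q$-gon collapses to a marked point, so $\Q_{\infty,q}$ looks like the Brownian plane at large scales; rigorously, this can be obtained either by adapting the Schaeffer-tree proofs of \cite{LGM10,LGM10erratum} (which go through essentially verbatim when the root face has bounded degree $2q$ instead of $2$), or by invoking the Gromov--Hausdorff--Prokhorov scaling limits of \cite{GM16a,GM16b}. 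Since $\alpha$ was picked in \eqref{eq:choixx0} so as not to be an atom of $\mathcal{V}_p$, combining the two previous steps gives
$$ \P\big(\mathcal{X}_s = 1 \mid H_r = \mathfrak{a}\big) \xrightarrow[s\to\infty]{} \P(\mathcal{V}_p > \alpha) = \lim_{s\to\infty}\P(\mathcal{X}_s=1) $$
for a.e.\ $\mathfrak{a}$, and bounded convergence finishes the argument: one picks $R\geq r$ large enough that the covariance is at most $\varepsilon$.

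The main obstacle is thus the robustness of the scaling limit of the volume under replacing the root $2$-gon by a $2q$-gon; once this is accepted, the rest is a routine conditioning exercise. For the $\mathcal{Y}_r$ covariance, one applies the same scheme independently inside each $\UIHPQ$, using the UIHPQ spatial Markov property (after peeling the hull, the unexplored part is a UIHPQ with a modified finite boundary segment, independent of the hull conditionally on the inner perimeter) together with the analogous scaling limit $s^{-4}\#\widetilde{\Ball}_s(\UIHPQ)\to\mathcal{V}_h$ from \cite{CCuihpq}, applied separately to $\UIHPQ$ and $\UIHPQ'$.
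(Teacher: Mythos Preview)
Your argument is correct and follows essentially the same route as the paper's: condition on the hull $\Ball_r^\bullet$, use the spatial Markov property together with a sandwich bound to compare $\#\Ball_s$ with the ball in the complement, and invoke the robustness of the volume scaling limit (in particular $s^{-4}\#\Ball_s(\Q_{\infty,q})\to\mathcal V_p$ for each fixed $q$) plus the fact that $\alpha$ is not an atom. The only cosmetic difference is that the paper treats the UIHPQ case first as the simpler one, since there the complement of the hull is \emph{unconditionally} a UIHPQ independent of the hull (no perimeter conditioning is needed), whereas you phrase it as ``conditionally on the inner perimeter''; this only makes your half-plane case easier than you suggest.
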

Using the above lemma we build a sequence $0 \ll r_{1} \ll r_{2} \ll \cdots$ such that for all $i <j$ one has $ \mathrm{Cov}( \mathcal{X}_{r_{i}}(	\Q_{\infty}),  \mathcal{X}_{r_{j}}(\Q_{\infty})) \leq 2^{-j}$ as well as $\mathrm{Cov}\big( \mathcal{Y}_{r_{i}}(\UIHPQ,\UIHPQ') ;  \mathcal{Y}_{r_{j}}( \UIHPQ, \UIHPQ')\big) \leq 2^{{-j}}$. Hence the hypotheses for the strong law of large numbers for weakly correlated variables are satisfied, see e.g.~\cite{Lyons88}, and this implies that 
$$ \frac{1}{k} \sum_{i=1}^k  \mathcal{X}_{r_{i}}(\Q_{\infty}) \xrightarrow[k\to\infty]{a.s.} \mathbb{P}( \mathcal{V}_{p}> \alpha) \quad \mbox{and }\quad \frac{1}{k} \sum_{i=1}^k\mathcal{Y}_{r_{i}}(\UIHPQ,\UIHPQ') \xrightarrow[k\to\infty]{a.s.} \mathbb{P}( \mathcal{V}_{h}+\mathcal{V}'_{h}> \alpha).$$
Since we have $ \mathcal{X}_{r}( \Q_{\infty}^{\leftrightarrow}) \geq \mathcal{Y}_{r}( \UIHPQ, \UIHPQ')$ this entails thanks to \eqref{eq:choixx0}
$$\liminf_{k\to\infty} \frac{1}{k} \sum_{i=1}^k  \mathcal{X}_{r_{i}}(\Q_\infty^{\leftrightarrow}) \geq \mathbb{P}( \mathcal{V}_{h}+\mathcal{V}'_{h}> \alpha)>\mathbb{P}( \mathcal{V}_{p}> \alpha).$$
In other words, the event $\{ \liminf_{k \to \infty} k^{-1} \sum_{i=1}^{k} \mathcal{X}_{r_{i}}( \mathfrak{q}) = \mathbb{P}( \mathcal{V}_{p} >\alpha)\}$ has probability $1$ under the law of the UIPQ and probability $0$ under the law of $\Q_{\infty}^{\leftrightarrow}$, finally establishing singularity of the two distributions, as expected. 

\subsection{Lemma \ref{lem:scales}: decoupling the scales}
\begin{proof}[Proof of Lemma \ref{lem:scales}]

We begin with the first statement concerning the half-planes. Let $ \UIHPQ$ and $\UIHPQ'$ be two copies of the UIHPQ and let $r \geq 0$. As recalled above, in \cite[Section 6]{CCuihpq} we established scaling limits for the volume process $(\# \Ball_{r}(\UIHPQ))_{r \geq 0}$ in the UIHPQ. It follows in particular from the almost sure continuity of the scaling limit process at time $t=1$ and the fact that the boundary effects are negligible that for any function $o(r)$ negligible with respect to $r$ we have 
 \begin{eqnarray} \label{eq:petit} \frac{\# \Ball_{r+o(r)}(\UIHPQ) }{\# \widetilde{\Ball}_{r}(\UIHPQ)} \xrightarrow[r\to\infty]{( \mathbb{P})} 1,  \end{eqnarray} where we recall that $\widetilde{\Ball}_{r}(\UIHPQ)$ are the inner vertices of ${\Ball}_{r}(\UIHPQ)$. Recall also that we denoted by $ \Ball_{r}^{\bullet}( \UIHPQ)$ the hull of the ball of radius $r$ inside $ \UIHPQ$, and that by the spatial Markov property of the UIHPQ the map $\UIHPQ[r]:=\UIHPQ \backslash \Ball_{r}^{\bullet}( \UIHPQ)$ rooted, say, at the first edge on the boundary of $\UIHPQ$ on the left of $ \Ball_{r}^{\bullet}(\UIHPQ)$, is distributed as a UIHPQ and independent of $\Ball_{r}^{\bullet}(\UIHPQ)$ (and also of $\# \widetilde{ \Ball}_{r}( \UIHPQ)$). An easy geometric argument shows that there exist two random constants $A_{r},B_{r} \geq 0$ depending on $\Ball_{r}^{\bullet}(\UIHPQ)$ such that we have for all $s \geq r$
$$ \#\widetilde{\Ball}_{s-A_{r}}(\UIHPQ[r]) - B_{r} \leq \#\widetilde{\Ball}_{s}( \UIHPQ) \leq  \#\widetilde{\Ball}_{s+A_{r}}(\UIHPQ[r]) + B_{r}.$$ Hence, using \eqref{eq:petit} we deduce that  for any $r \geq 1$
$$ \frac{\# \widetilde{\Ball}_{s}( \UIHPQ)}{\# \widetilde{\Ball}_{s}(\UIHPQ[r])}  \xrightarrow[s\to\infty]{( \mathbb{P})} 1,$$ and similarly when considering the other copy $ \UIHPQ'$ of the UIHPQ. The point being that now $\# \widetilde{\Ball}_{s}(\UIHPQ[r])$ is independent of $\# \widetilde{\Ball}_{r}(\UIHPQ)$. We use these convergences together with the fact that $\alpha$ is not an atom of the law of $ \mathcal{V}_{h}+ \mathcal{V}_{h}'$ to deduce that
$$ \mathbf{1}_{\# \widetilde{\Ball}_{s}( \UIHPQ) + \# \widetilde{\Ball}_{s}( \UIHPQ') > \alpha s^{4}} - \mathbf{1}_{\# \widetilde{\Ball}_{s}( \UIHPQ[r]) + \# \widetilde{\Ball}_{s}( \UIHPQ'[r]) > \alpha s^{4}} \xrightarrow[s\to\infty]{( \mathbb{P})} 0.$$ 
When developing the covariance $\mathrm{Cov}( \mathcal{Y}_{r}(\UIHPQ,\UIHPQ'); \mathcal{Y}_{s}(\UIHPQ,\UIHPQ'))$ we can then replace $\mathbf{1}_{\# \widetilde{\Ball}_{s}( \UIHPQ) + \# \widetilde{\Ball}_{s}( \UIHPQ') > \alpha s^{4}}$ by $\mathbf{1}_{\# \widetilde{\Ball}_{s}( \UIHPQ[r]) + \# \widetilde{\Ball}_{s}( \UIHPQ'[r]) > \alpha s^{4}}$ with asymptotically no harm: For $r$ fixed as $s \to \infty$ we have
\begin{eqnarray*} && \mathrm{Cov}\big( \mathcal{Y}_{r}(\UIHPQ,\UIHPQ') ;  \mathcal{Y}_{s}( \UIHPQ, \UIHPQ')\big) \\ 
 &=& \mathbb{E}\left[   \mathbf{1}_{\# \widetilde{\Ball}_{s}( \UIHPQ) + \# \widetilde{\Ball}_{s}( \UIHPQ') > \alpha s^{4}}\mathbf{1}_{\# \widetilde{\Ball}_{r}( \UIHPQ) + \# \widetilde{\Ball}_{r}( \UIHPQ') > \alpha r^{4}}\right] \\ 
& & - \mathbb{P}(\# \widetilde{\Ball}_{s}( \UIHPQ) + \# \widetilde{\Ball}_{s}( \UIHPQ') > \alpha s^{4}) \mathbb{P}(\# \widetilde{\Ball}_{r}( \UIHPQ) + \# \widetilde{\Ball}_{r}( \UIHPQ') > \alpha r^{4})\\ 
&=&
o(1)+ \mathbb{E}\left[   \mathbf{1}_{\# \widetilde{\Ball}_{s}( \UIHPQ[r]) + \# \widetilde{\Ball}_{s}( \UIHPQ'[r]) > \alpha s^{4}}\mathbf{1}_{\# \widetilde{\Ball}_{r}( \UIHPQ) + \# \widetilde{\Ball}_{r}( \UIHPQ') > \alpha r^{4}}\right] \\ && - \mathbb{P}(\# \widetilde{\Ball}_{s}( \UIHPQ) + \# \widetilde{\Ball}_{s}( \UIHPQ') > \alpha s^{4}) \mathbb{P}(\# \widetilde{\Ball}_{r}( \UIHPQ) + \# \widetilde{\Ball}_{r}( \UIHPQ') > \alpha r^{4})\\
&=& o(1) + \mathbb{P}(\# \widetilde{\Ball}_{s}( \UIHPQ[r]) + \# \widetilde{\Ball}_{s}( \UIHPQ'[r]) > \alpha s^{4}) \mathbb{P}(\# \widetilde{\Ball}_{r}( \UIHPQ) + \# \widetilde{\Ball}_{r}( \UIHPQ') > \alpha r^{4})  \\
&& -\mathbb{P}(\# \widetilde{\Ball}_{s}( \UIHPQ) + \# \widetilde{\Ball}_{s}( \UIHPQ') > \alpha s^{4}) \mathbb{P}(\# \widetilde{\Ball}_{r}( \UIHPQ) + \# \widetilde{\Ball}_{r}( \UIHPQ') > \alpha r^{4}),\end{eqnarray*}
where in the last line we used the independence of $\UIHPQ[r]$ and $ \Ball^{\bullet}_{r}(\UIHPQ)$ (and similarly for $\UIHPQ'$). Since $ \UIHPQ[r]$ has the law of a UIHPQ the product of probabilities cancels out and the covariance indeed tends to $0$ as $s \to \infty$ as desired.

For the case of the UIPQ things are a little more complicated since the spatial Markov property involves the perimeter of the discovered region. For $r \geq 1$ we also consider the hull $\Ball_{r}^{\bullet}(\Q_{\infty})$ of the ball of radius $r$ inside the UIPQ. Unfortunately $\Q_{\infty} \backslash \Ball_{r}^{\bullet}(\Q_{\infty})$ (appropriately rooted) is not independent of $\Ball_{r}^{\bullet}(\Q_{\infty})$: conditionally on $\Ball_{r}^{\bullet}(\Q_{\infty})$ the map $\Q_{\infty} \backslash \Ball_{r}^{\bullet}(\Q_{\infty})$ has the law of a UIPQ of the $2\ell$-gon where $2\ell$ is the perimeter of the unique hole of $\Ball_{r}^{\bullet}(\Q_{\infty})$. Yet, it can be shown (for example using the techniques of \cite{CLGpeeling}) that for any $\ell \geq 1$ if $\Q_{\infty,\ell}$ is a UIPQ of the $2\ell$-gon then we still have 
$$ r^{-4} \# \Ball_{r}(\Q_{\infty,\ell}) \xrightarrow[r\to\infty]{(d)} \mathcal{V}_{p}.$$
This is in fact sufficient in order to adapt the above proof to the case of the UIPQ. At this point in the paper, we leave the details to the courageous reader.
\end{proof}

\section{Open problems} \label{sec:conj}
{In this section we discuss several open problems related to the topic of this paper, and indicate some possible directions for further research. First we state a natural conjecture motivated by Theorem \ref{thm:singular} (see \cite[Section 5]{ANR14} for a related conjecture):

\begin{conj} \label{conj:singular} The laws of  $\Q_{\infty}$, $ \Q^\rightarrow_\infty$ and of $\Q_{\infty}^\leftrightarrow$  are singular with respect to each other.
\end{conj}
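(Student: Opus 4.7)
The plan is to extend the multi-scale law-of-large-numbers strategy used for \cref{thm:singular}, replacing the one-point volume-mean comparison by a genuinely distributional observable, and to use the Gwynne--Miller scaling-limit identification of the three models \cite{GM16a,GM16b} to separate them. Concretely, I would invoke the convergences $r^{-4}\#\Ball_r(\Q_\infty^\ast) \to \mathcal{V}^\ast$ in distribution as $r\to\infty$, for $\ast \in \{\emptyset,\rightarrow,\leftrightarrow\}$, where $\mathcal{V}^\emptyset = \mathcal{V}_p$, $\mathcal{V}^\rightarrow$, and $\mathcal{V}^\leftrightarrow$ are the origin-ball volumes of the weight-$4/3$, weight-$2$, and weight-$4$ quantum cones in $\sqrt{8/3}$-Liouville quantum gravity respectively. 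The key continuum input is that these three laws on $\mathbb{R}_+$ are \emph{pairwise distinct}: this can be read off the Williams/Bessel decomposition of a quantum cone at its marked point, since the weight parameter shifts the drift of the log-conformal-radius process and hence the marginal law of the unit-ball mass.

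Next I would extend the decorrelation-across-scales statement of \cref{lem:scales} to $\Q_\infty^\rightarrow$. The spatial Markov property used there transfers directly: $\Ball_r^\bullet(\Q_\infty^\rightarrow)$ is obtained by folding a hull inside $\UIHPQ$, and its complement (appropriately re-rooted) is again distributed as a UIHPQ, independent of $\Ball_r^\bullet(\Q_\infty^\rightarrow)$. Combined with the $\sqrt{n}$-diffusivity of $\mathsf{P}^\rightarrow$ established in \cref{thm:diffusive}, this forces the $r$-dependent boundary effects on $\#\Ball_s(\Q_\infty^\rightarrow)$ to be $o(s^4)$ as $s\to\infty$ with $r$ fixed, exactly as in the second half of the proof of \cref{lem:scales}. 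Hence the covariance between scale $r$ and scale $s$ observables tends to $0$ as $s\to\infty$ with $r$ fixed, and an analogous bound holds under the laws of both $\Q_\infty$ and $\Q_\infty^\leftrightarrow$ (the latter already handled by \cref{lem:scales}).

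With Steps 1 and 2 in hand, the conclusion is combinatorial. For each ordered pair $(\ast_1,\ast_2)$ with $\ast_1 \neq \ast_2$, I would pick $\alpha = \alpha_{\ast_1,\ast_2} \in \mathbb{R}_+$ that is a continuity point of both laws $\mathcal{V}^{\ast_1}$ and $\mathcal{V}^{\ast_2}$ and at which the two CDFs disagree, then a rapidly growing sequence of scales $r_1 \ll r_2 \ll \cdots$ along which the covariance of $\mathbf{1}_{\{r_i^{-4}\#\Ball_{r_i} > \alpha\}}$ decays summably under both laws. The strong law of large numbers for weakly correlated variables \cite{Lyons88} then gives
\[
\frac{1}{k} \sum_{i=1}^{k} \mathbf{1}_{\{r_i^{-4}\#\Ball_{r_i} > \alpha\}} \;\xrightarrow[k\to\infty]{\mathrm{a.s.}}\; \mathbb{P}(\mathcal{V}^\ast > \alpha)
\]
under $\Q_\infty^\ast$. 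Since the two limits differ by construction, the event isolating one value separates the two laws. Running this over the three pairs yields the conjecture.

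The main obstacle is the pair $\Q_\infty$ versus $\Q_\infty^\rightarrow$: the simple mean comparison used in \cref{thm:singular} fails here, since the only lower bound on $\mathbb{E}[\#\Ball_r(\Q_\infty^\rightarrow)]$ available from surgery is $r^4/12$, which lies on the \emph{wrong} side of $(3/28)r^4 = \mathbb{E}[\#\Ball_r(\Q_\infty)]$. One is therefore forced to argue via the full distributional separation in the scaling limit, and the crux of the proof is precisely to extract this separation rigorously from the quantum-cone description of \cite{GM16a,GM16b}, together with enough uniform integrability (or tightness control at high volumes) to ensure that the distributional convergence interacts correctly with the chosen test functional.
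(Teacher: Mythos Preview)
The statement you are addressing is \cref{conj:singular}, which the paper leaves explicitly \emph{open}; there is no proof in the paper to compare against. The authors themselves suggest precisely the route you take, namely replacing the mean-volume input \eqref{eq:volumeexact} by the Gwynne--Miller quantum-cone identification and rerunning the multi-scale argument of \cref{lem:scales}. In that sense your plan matches the paper's intended strategy, and you have correctly identified that the pair $(\Q_\infty,\Q_\infty^\rightarrow)$ is the obstruction and that a purely distributional (rather than first-moment) comparison is required.

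That said, what you have written is a roadmap, not a proof, and several of the steps you treat as routine are exactly where the difficulty lies. First, the convergence $r^{-4}\#\Ball_r(\Q_\infty^\rightarrow)\to\mathcal{V}^\rightarrow$ is not established in this paper; extracting it from \cite{GM16a,GM16b} requires measured Gromov--Hausdorff(--Prokhorov) convergence together with continuity of the unit-ball volume functional, neither of which you justify. Second, the assertion that the three unit-ball volume laws on the weight-$4/3$, $2$, and $4$ quantum cones are pairwise distinct is the entire continuum input; your one-line appeal to a ``Williams/Bessel decomposition'' and a drift shift is suggestive but not a proof, and this is the point that genuinely replaces the inequality $1/6>3/28$ in the paper's argument. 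Third, your spatial Markov claim for $\Q_\infty^\rightarrow$ is imprecise: while the preimage in $\UIHPQ$ of $\Ball_r^\bullet(\Q_\infty^\rightarrow)$ can indeed be revealed by a peeling procedure and its complement in $\UIHPQ$ is a fresh UIHPQ, the object you must control is the complement \emph{in} $\Q_\infty^\rightarrow$, which is that fresh UIHPQ folded at the SAW exit point together with an additional free boundary arc of random (hull-dependent) length. Showing that $s^{-4}\#\Ball_s$ of this perturbed object still converges to $\mathcal{V}^\rightarrow$, uniformly over the conditioning, is the analogue of the last paragraph of the proof of \cref{lem:scales} for the UIPQ and is not a formality.
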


\setcounter{open}{0}
As stated in the Introduction, a possible way towards a proof of this conjecture would be to use the recent work \cite{GM16a,GM16b} as a replacement of the input \eqref{eq:volumeexact} and to adapt the proof of the last section. Recall also the open question about coincidence of quenched and annealed connective constants:
\begin{open}[Coincidence of the quenched and annealed connective constants] The quenched connective constant $\mu(\Q_{\infty})$ of the UIPQ is less than the ``annealed'' connective constant which is equal to $9/2$. Do we actually have equality?
\end{open}

In light of the works devoted to random walks on random planar maps (and in particular the fact that the UIPQ is recurrent \cite{GGN12}) the following question is also natural:

\begin{open} \label{conj:recurrent} Are the random lattices $ \Q^\rightarrow_\infty$ and $\Q_{\infty}^\leftrightarrow$ almost surely recurrent?
\end{open}

Both open questions would have a positive answer if the geometry of $\Q^{\rightarrow}_{\infty}$ (resp.\ that of $\Q^\leftrightarrow_\infty$) at a given scale were comparable (in a strong ``local'' sense) to that of the UIPQ. A starting point for this strong comparison would be to show that for any $r \geq 1$ the following two random variables 
$$ \Ball_{2r}( Q_{\infty}) \backslash \Ball^{\bullet}_{r}( Q_{\infty}) \quad \mbox{ and }\quad \Ball_{2r}( Q_{\infty}^{\rightarrow}) \backslash \Ball^{\bullet}_{r}( Q_{\infty}^{\rightarrow}),$$ are contiguous (i.e.\, every graph property that holds with high probability for the first random variable also holds for the second one and vice-versa). We do not, however, intend to conjecture this is true.}

\bibliographystyle{siam}
\bibliography{bibli}

\end{document}